\def\ds{\displaystyle}
\def\G{\Gamma}
\def\O{\Omega}
\renewcommand\sp{\mathop{\mathrm{Sp}}\nolimits}
\newcommand{\set}[1]{\lbrace #1 \rbrace}
\newcommand{\jump}[1]{\llbracket #1 \rrbracket}
\newcommand{\mean}[1]{\{#1\}}
\newcommand{\norm}[1]{\lVert#1\rVert}
\newcommand{\n}{\boldsymbol{n}}
\newcommand\bu{\boldsymbol{u}}
\newcommand\bv{\boldsymbol{v}}
\newcommand\bx{\boldsymbol{x}}
\newcommand\bs{\boldsymbol{s}}
\newcommand\bn{\boldsymbol{n}}
\def\hdel{\widehat{\delta}}
\def\CM{\mathcal{X}}
\def\CN{\mathcal{Y}}
\newcommand\bF{\boldsymbol{f}}
\newcommand\bI{\boldsymbol{I}}
\newcommand\bT{\boldsymbol{T}}
\newcommand\0{\mathbf{0}}
\newcommand{\cF}{\mathcal{F}}
\newcommand\cP{\mathcal{P}}
\newcommand\cT{\mathcal{T}}
\def\CT{{\mathcal T}}
\newcommand\beps{\boldsymbol{\varepsilon}}
\newcommand\btau{\boldsymbol{\tau}}
\newcommand\R{\mathbb{R}}
\renewcommand\div{\mathop{\mathrm{div}}\nolimits}
\newcommand\tr{\mathop{\mathrm{tr}}\nolimits}
\renewcommand\sp{\mathop{\mathrm{sp}}\nolimits}
\newcommand\LO{\L^2(\O)}
\newcommand\HsO{\H^s(\O)}
\newcommand{\vertiii}[1]{{\left\vert\kern-0.25ex\left\vert\kern-0.25ex\left\vert #1 
    \right\vert\kern-0.25ex\right\vert\kern-0.25ex\right\vert}}
\newcommand{\vertiH}[1]{{\left\vert\kern-0.25ex\left\vert #1 
		\right\vert\kern-0.25ex\right\vert_{\H(h)}}}
\newtheorem{theorem}{Theorem}[section]
\newtheorem{lemma}[theorem]{Lemma}
\newtheorem{corollary}[theorem]{Corollary}
\theoremstyle{definition}
\theoremstyle{remark}
\newtheorem{remark}[theorem]{Remark}
\numberwithin{equation}{section}
\title[DG for the nearly incompressible elasticity eigenvalue problem]{Interior penalty discontinuous Galerkin methods  for the nearly incompressible elasticity eigenvalue problem with heterogeneous media}
\author{Arbaz Khan}
\address{Department of Mathematics, Indian Institute of Technology Roorkee, Roorkee 247667, India}
\curraddr{}
\email{arbaz@ma.iitr.ac.in}
\thanks{The first author was  partially
	supported by the Sponsored Research \& Industrial Consultancy (SRIC), Indian Institute of Technology Roorkee,	India through the faculty initiation grant MTD/FIG/100878; by SERB MATRICS grant	MTR/2020/000303; by SERB Core research grant CRG/2021/002569.}
\author{Felipe Lepe}
\address{GIMNAP-Departamento de Matem\'atica, Universidad del B\'io - B\'io, Casilla 5-C, Concepci\'on, Chile}
\curraddr{}
\email{flepe@ubiobio.cl}
\thanks{The second author was partially supported by  DIUBB through project 2120173 GI/C Universidad del B\'io-B\'io.}
\author{Jesus Vellojin}
\address{GIMNAP-Departamento de Matem\'atica, Universidad del B\'io - B\'io, Casilla 5-C, Concepci\'on, Chile}
\curraddr{}
\email{jvellojin@ubiobio.cl}
\thanks{The third author was partially supported by the National Agency for Research and Development, ANID-Chile through FONDECYT Postdoctorado project 3230302}
\newcommand\Q{\mathrm{Q}}
\renewcommand\H{\mathrm{H}}
\newcommand\err{\texttt{err}}
\newcommand\eff{\texttt{eff}}
\renewcommand\O{\Omega}
\renewcommand\H{\mathrm{H}}
\renewcommand\L{\mathrm{L}}
\begin{document}

\subjclass[2020]{35Q35,  65N15, 65N25, 65N30, 76D07}

\date{}

\dedicatory{}
\keywords{Eigenvalue problems, Discontinuous Galerkin method,  Error estimates, A posteriori analysis}

\begin{abstract}
This paper studies the family of interior penalty discontinuous Galerkin methods for solving the Herrmann formulation of the linear elasticity eigenvalue problem in heterogeneous media. By employing a weighted Lam\'e coefficient norm within the framework of non-compact operators theory, we prove convergence of both continuous and discrete eigenvalue problems as the mesh size approaches zero, independently of the Lam\'e constants. Additionally, we conduct an a posteriori analysis and propose a reliable and efficient estimator. The theoretical findings are supported by numerical experiments.
\end{abstract}

\maketitle

\section{Introduction}\label{sec:intro}
Recently in \cite{khan2023finite} the nearly incompressible elasticity eigenvalue problem has been analyzed. In this reference the authors have proved that the elasticity spectrum converges to the Stokes eigenvalues when the Poisson ratio tends to $1/2$.  From this fundamental observation, and with the aid of  inf-sup stable families of finite elements for Stokes, the authors approximated the spectrum accurately. Motivated by these findings and in alignment with our research trajectory, we now turn our attention to the application of the interior penalty discontinuous Galerkin method.

Interior Penalty Discontinuous Galerkin Finite Element Method (IPDG) represents a sophisticated approach to tackling eigenvalue problems, leveraging the advantages of both discontinuous Galerkin methods and interior penalty techniques. This method excels in handling problems associated with singularities, discontinuities, or complex geometries by incorporating penalty terms within the weak formulation. The inclusion of these penalty terms facilitates the imposition of continuity constraints across element boundaries, effectively minimizing numerical oscillations and ensuring stability. IPDG proves particularly adept at capturing eigenvalues in problems with variable coefficients or irregular domains. By introducing penalty terms to penalize jumps in the solution and its gradient across element interfaces, this method strikes a balance between accuracy and stability, making it a reliable choice for eigenvalue problems characterized by challenging features and intricate geometries.

The application of the IPDG methods for eigenvalue problems is a current subject of study and we refer to \cite{MR2220929,MR2263045,MR4623018,MR4077220,MR3962898,MR4673997} as our main references which explore applications encompassing the Laplace operator, the Maxwell's equations, Stokes, elasticity and acoustics.  These studies collectively demonstrate the broad applicability and efficacy of IPDG techniques in diverse scientific domains. Here the main advantage lies, in one hand, on the flexibility of the method where hanging nodes are allowed, and on the other, the easy computational implementation where  with no major difficulties is possible to consider high order approximations for the method. This inherent adaptability not only enables accurate approximation of the spectrum but also highlights the method's suitability for tackling complex eigenvalue problems. However, there is a price to pay when using this method and is related to the stabilization parameter that precisely penalizes the jumps between elements. While this parameter is crucial for ensuring stability and convergence, its determination can pose significant computational hurdles. Theoretical investigations \cite{MR4623018,MR4077220,MR3962898,MR4673997} have underscored the significance of the stabilization parameter in determining the stability of IPDG methods. The appropriate selection of this parameter depends on several factors, such as boundary conditions, geometrical assumptions and physical parameters, among others. The correct choice of the stabilization parameter is fundamental to achieve an accurate approximation to the physical spectrum, without spurious eigenvalues. The literature suggests that the stabilization parameters leading to accurate results are proportional to the square of the polynomial degree, although the proportionality constant varies depending on the specific physical parameters involved. This empirical observation highlights the intricate interplay between numerical methods and physical phenomena, and emphasizes the need for judicious parameter selection to ensure the fidelity of the computational results.


\subsection{Notations}

Throughout this work, $\O$ is a generic Lipschitz bounded domain of $\R^d$, $d\in\{2,3\}$. For $s\geq 0$,
$\norm{\cdot}_{s,\O}$ stands indistinctly for the norm of the Hilbertian
Sobolev spaces $\HsO$ or $[\HsO]^2$ with the convention
$\H^0(\O):=\LO$.  If ${X}$ and ${Y}$ are normed vector spaces, we write ${X} \hookrightarrow {Y}$ to denote that ${X}$ is continuously embedded in ${Y}$. We denote by ${X}'$ and $\|\cdot\|_{{X}}$ the dual and the norm of ${X}$, respectively.
Finally,
we employ $\0$ to denote a generic null vector and
the relation $\texttt{a} \lesssim \texttt{b}$ indicates that $\texttt{a} \leq C \texttt{b}$, with a positive constant $C$ which is independent of $\texttt{a}$, $\texttt{b}$, and the size of the elements in the mesh. The value of $C$ might change at each occurrence. We remark that we will write the constant $C$ and the estimates dependencies only when is needed.

Let us recall the following standard notations for any tensor field $\boldsymbol{\tau}=\left(\tau_{i j}\right)_{1\leq i, j\leq d}$, any vector field $\boldsymbol{v}=\left(v_i\right)_{1\leq i\leq d}$ and any scalar field $v$:
$$
\begin{gathered}
	\nabla\cdot \boldsymbol{v}:=\sum_{i=1}^{d}\partial_iv_i,  \quad \nabla v:=\left(\partial_i v\right)_{1\leq i\leq d},\quad \boldsymbol{\nabla}\cdot \btau:=\left(\sum_{j=1}^{d}\partial_j\tau_{ij}\right)_{1\leq i\leq d},\\
	\nabla \bv:= \left(\partial_i v_j\right)_{1\leq i,j\leq d},\quad	\boldsymbol{\tau}^{\mathrm{t}}:=\left(\tau_{j i}\right), \quad \operatorname{tr}(\boldsymbol{\tau}):=\sum_{i=1}^d \tau_{i i}, \quad \boldsymbol{\tau}^a: \boldsymbol{\tau}^b:=\sum_{i, j=1}^d \tau_{i j}^a \tau_{i j}^b.
\end{gathered}
$$

\subsection{Model problem}
Let $\Omega\subset\mathbb{R}^d$ with $d\in\{2,3\}$ be an open and bounded
domain with Lipschitz boundary $\partial\Omega$. Let us assume that
the boundary is divided into two parts $\partial\Omega:=\Gamma_D\cup\Gamma_N$,
where $|\Gamma_D|>0$. In particular, we assume that the structure is fixed on $\Gamma_D$ and free of stress on $\Gamma_N$.

The eigenvalue problem of interest is given by:  Find $(\widehat{\kappa}, \bu)$ such that
\begin{equation}
	\label{eq:eigenvalue-problem}
	\begin{aligned}
		-\boldsymbol{\nabla}\cdot\left[2\widehat{\mu}\beps(\bu) + \widehat{\lambda}\tr(\beps(\bu))\boldsymbol{I}\right] &= \widehat{\kappa}\rho\bu &\text{ in } \Omega,\\
		\bu &= \boldsymbol{0} & \mbox{ on } \G_D,\\
		\left[2\widehat{\mu}\beps(\bu) + \widehat{\lambda}\tr(\beps(\bu))\boldsymbol{I}\right]\bn  & =  {\boldsymbol{0} }& \mbox{ on } \G_N,
	\end{aligned}
\end{equation}
where $\boldsymbol{I}\in\mathbb{R}^{d\times d}$ represents the identity matrix, $\sqrt{\widehat{\kappa}}$ is the natural frequency, $\bu$ is the displacement and $\rho$ is the material density. The Lam\'e parameters $\widehat{\lambda}$ and $\widehat{\mu}$ are defined by
$$
	\widehat{\lambda}:=\frac{E(x)\nu}{(1+\nu)(1-2\nu)}\quad\text{and}\quad \widehat{\mu}:=\frac{E(x)}{2(1+\nu)},
$$
where $E(x)$ represents  the space-variable Young's modulus, $\nu$ represents  the Poisson ratio which we assume as a constant. The strain tensor is represented by $\beps(\cdot)$ and defined as $\beps(\bv):=\frac{1}{2}(\nabla\bv+(\nabla\bv)^{\texttt{t}})$, where $\texttt{t}$ represents the transpose operator.    Here and thereafter we   assume the existence of two positive constants
$E_{\min}$ and $E_{\max}$ such that   
$
E_{\min}\leq E(x)\leq E_{\max}.
$

For the analysis of \eqref{eq:eigenvalue-problem} we scale the system with the quantity  $(1+\nu)$ leading to the following problem. Find $(\kappa, \bu)$ with $\bu\neq\boldsymbol{0}$ such that
\begin{equation}
	\label{eq:eigenvalue-problem-scaled}
	\begin{aligned}
		-\boldsymbol{\nabla}\cdot\left[2\mu\beps(\bu) + \lambda\tr(\beps(\bu))\boldsymbol{I}\right] &= \kappa\rho\bu &\text{ in } \Omega,\\
		\bu &= \boldsymbol{0} & \mbox{ on } \G_D,\\
		\left[2\mu\beps(\bu) + \lambda\tr(\beps(\bu))\boldsymbol{I}\right]\bn  & =  \boldsymbol{0} & \mbox{ on } \G_N,
	\end{aligned}
\end{equation}
where the scaled eigenvalue and the Lam\'e coefficients are given by
\begin{equation}
\label{eq:scaled}
\kappa:=(1+\nu)\widehat{\kappa},\qquad\lambda:=\frac{E(x)\nu}{1-2\nu},\qquad \mu:=\frac{E(x)}{2}.
\end{equation}
Note that for a given $E(x)$, the eigenvalue problem \eqref{eq:eigenvalue-problem-scaled} has the advantage of having $\mu$ independent of $\nu$. The incompressibility of the material is explicitly given only by the behavior of the constant $\nu$ through the Lamé parameter $\lambda$.
 
With the scaled parameters specified in \eqref{eq:scaled} we rewrite system \eqref{eq:eigenvalue-problem} as the following system:
We seek the eigenvalue $\kappa\in\mathbb{R}^+$, a vector field $\bu\neq\boldsymbol{0}$, 
and a scalar field $p\neq 0$ such that
\begin{align}\label{stokes-cont}
-\boldsymbol{\nabla}\cdot(2\mu\beps(\bu)) +\nabla p & =
 \kappa \rho  \bu & \mbox{ in } \O, \nonumber\\
\nabla\cdot\bu +\frac{1}{\lambda}p& =  0 & \mbox{ in } \O, \nonumber  \\
\bu & = \boldsymbol{0} & \mbox{ on } \G_D,\\
\left[2\mu\beps(\bu) - p\bI\right]\bn & =  \boldsymbol{0} & \mbox{ on } \G_N. \nonumber
 \end{align}
We note from \eqref{eq:scaled} that if  the Poisson ratio approaches $1/2$,
then $\lambda \uparrow \infty$.
In such case, we recover the Stokes system.
This is an important matter in view of deriving robust stability and error bounds.   From now and on, and for simplicity,
in the analysis below we consider $\rho=1$.

\subsection{Main contribution}
To the best of our knowledge, the realm of IPDG methods remains uncharted territory in the context of the model problem (\ref{stokes-cont}). This paper marks the inaugural exploration in this direction. The primary focus is on the family of IPDG methods employed for solving the eigenvalue problem in nearly incompressible elasticity within heterogeneous media. A cornerstone of this research lies in the comprehensive examination of both a priori and a posteriori error analyses, shedding light on the method's performance. Notably, our proposed estimator exhibits remarkable robustness with respect to the Poisson ratio, underpinning its suitability for addressing complex, real-world scenarios. Furthermore, it is established that the spectrum of the nearly incompressible elasticity eigenvalue problem within heterogeneous media converges to the spectrum of the Stokes problems as the parameter $\lambda$ tends towards infinity


\subsection{Outline}
The subsequent sections of the paper are structured as follows: Section \ref{sec:prob_form} delves into the weak formulation and presents some initial findings. Section \ref{sec:DG} explores the family of IPDG methods concerning the model problem (\ref{stokes-cont}). Convergence results and error estimates are thoroughly examined in Section \ref{sec:conv_error}. A comprehensive a posteriori error analysis is detailed in Section \ref{sec:apost}. Lastly, Section \ref{sec:numerics} showcases numerical results, demonstrating the effectiveness of the IPDG methods.


\section{Variational formulation and preliminary results}\label{sec:prob_form}
 Let us begin by  defining  the spaces $\H:=\left\{\bv\in \H^1(\Omega)^d\;:\; \bv =\boldsymbol{0} \text{ on }\Gamma_D\right\}$ and $\Q:=\L^2(\Omega)$ where we will seek the displacement and the pressure, respectively. Now, by testing
 system \eqref{stokes-cont} with adequate
functions and imposing the boundary conditions,
we end up with the following  saddle point variational formulation:
Find $(\kappa,(\bu,p))\in\R^+\times\H\times\Q$ with $(\bu,p)\neq (\boldsymbol{0},0)$ such that
\begin{align*}
2\int_{\O}\mu(x)\beps(\bu):\beps(\bv)\, dx-\int_{\O}p\nabla\cdot\bv\, dx&=
\kappa\int_{\O}\bu\cdot\bv\, dx&
\quad\forall\bv\in\H,\\ 
-\int_{\O}q\nabla\cdot\bu\, dx-\int_{\O}\frac{1}{\lambda(x)}pq\,dx&=\,0&\quad\forall q\in\Q.
\end{align*}
This variational problem can be rewritten as follows:
{\em Find $(\kappa,(\bu,p))\in\R\times\H\times\Q$ such that}
\begin{equation}\label{def:limit_system_eigen_complete}
	\left\{
	\begin{array}{rcll}
a(\bu,\bv)      +b(\bv,p)&=&\kappa d(\bu,\bv)&\forall\bv\in\H,\\
b(\bu,q)     - c(p,q)  & =&\;0 &\forall q\in\Q,
\end{array}
\right.
\end{equation}
where the bilinear forms
$a:\H\times\H\to\R$,
$b:\H\times\Q\to\R$,
$c:\Q\times\Q\to\R$,
and
$d:\H\times\H\to\R$
are defined by
\begin{align*}
a(\bu,\bv)&:=2\int_{\O}\mu(x)\beps(\bu):\beps(\bv)\, dx,\quad
b(\bv,q):=-\int_{\O}q\nabla\cdot\bv\, dx,\\
d(\bu,\bv)&:=\int_{\O} \bu \cdot\bv\, dx,\quad  c(p,q):=\int_{\O} \frac{1}{\lambda(x)}p\, q\, dx,
\end{align*}
for all $\bu,\bv\in\H$, and $p,q\in\Q$.

To perform the analysis, we need a suitable
norm which in particular depends on the parameters of the problem.
With this in mind, and for all $(\bv,q)\in\H\times Q$,
we define the following weighted norm,
$$
	\vertiii{(\bv,q)}_{\H\times Q}^2:=\Vert\mu(x)^{1/2}\nabla\bv\Vert_{0,\O}^2 + \Vert\mu(x)^{-1/2}q\Vert_{0,\O}^2 +\Vert\lambda(x)^{-1/2}q\Vert_{0,\O}^2.
$$
Moreover, in what follows, we assume that $\mu(x),1/\lambda(x)\in \mathrm{W}^{1,\infty}(\Omega)$.

Let us introduce and define  the kernel of $b(\cdot,\cdot)$ by 
$$
\mathcal{K} := \{ \bv \in \H\,:\, b(\bv,q)=0,\quad \forall \, q\in Q\}\,=\,\{ \bv \in \H\,:\, \nabla\cdot \bv \,=\,0\,\,\,\, {\rm in}\,\, \Omega\},
$$
and let us recall that the bilinear form $b(\cdot,\cdot)$ satisfies the inf-sup condition:
$$
\sup_{\stackrel{\scriptstyle\bv\in\H}{\bv\ne0}}\frac{\vert
  b(\bv,q)\vert}{\Vert\mu(x)^{1/2}\nabla\bv\Vert_{0,\O}}\ge \beta_2
  \Vert\mu(x)^{-1/2}q\Vert_{0,\O} \quad\forall q\in\Q,
$$
with an inf-sup constant $\beta_2>0$ only depending on $\Omega$; see \cite{MR3973678}, for instance. 

Now, we are in a position to introduce the solution operator which we denote by
$\bT_{\nu}: \L^2(\Omega)^d\rightarrow \L^2(\Omega)^d$ and is such that $\boldsymbol{f}\mapsto \bT_{\nu} \boldsymbol{f}:=\widehat{\bu}$, where the pair $(\widehat{\bu},\widehat{p})\in \H\times \Q$ is the solution of the following well posed source problem: Given $\boldsymbol{f}\in \L^2(\O)^d$ find $(\widehat{\bu},\widehat{p})\in \H\times \Q$ such that
\begin{equation}\label{def:system_source_complete}
	\left\{
	\begin{array}{rcll}
a(\widehat{\bu},\bv)      +b(\bv,\widehat{p})&=&d(\boldsymbol{f} ,\bv)&\forall\bv\in\H,\\
b(\widehat{\bu},q)     - c(\widehat{p},q)  & =&\;0 &\forall q\in\Q.
\end{array}
\right.
\end{equation}
Thanks to the  Babu\v{s}ka-Brezzi theory
we have that $\bT_{\nu}$ is well defined and the following estimate holds $\Vert\bT_{\nu}\boldsymbol{f}\Vert_{0,\O}\le\vertiii{ (\bT_{\nu}\boldsymbol{f},\widehat{p})} \lesssim\|\boldsymbol{f}\|_{0,\Omega}$
with a constant depending on the inverse of $\mu$ and the Poincaré constant.
It is easy to check that $\bT_{\nu}$ is a selfadjoint
operator with respect to the $\L^2$ inner product. Also, let $\chi$ be a real number such that  $\chi\neq 0$. Notice that $(\chi,\boldsymbol{u})\in \mathbb{R}\times\H$ is an eigenpair of $\bT_\nu$ if and only if  there exists $p\in \Q$ such that,  $(\kappa,(\boldsymbol{u}, p))$ solves problem \eqref{def:limit_system_eigen_complete}  with $\chi:=1/\kappa$.

The following result states an additional regularity
result for problem \eqref{def:system_source_complete}.
\begin{lemma}
\label{rmrk:additional}
Let $(\widehat{\bu},\widehat{p})$ be the unique solution
of \eqref{def:system_source_complete}, then there exists
$s\in(0,1]$ such that $(\widehat{\bu},\widehat{p})\in \H^{1+s}(\O)^{d}\times \H^{s}(\O)$
and the following estimate holds (see for instance \cite{grisvard1986problemes} or  \cite{MR4430561})
$$
\|\widehat{\bu}\|_{1+s,\O}+\|\widehat{p}\|_{s,\O}\lesssim \|\boldsymbol{f}\|_{0,\O},
$$
where the hidden constant depends on $\Omega$ and the Lam\'e coefficients.
\end{lemma}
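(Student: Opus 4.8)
The plan is to reduce the problem to a standard elliptic regularity statement for the Stokes system with variable viscosity. Recall that $(\widehat{\bu},\widehat{p})$ solves the mixed problem \eqref{def:system_source_complete}, which after unwinding the bilinear forms $a,b,c$ is exactly the strong system
\begin{align*}
-\boldsymbol{\nabla}\cdot(2\mu(x)\beps(\widehat{\bu})) + \nabla\widehat{p} &= \boldsymbol{f} & \text{in }\O,\\
\nabla\cdot\widehat{\bu} + \tfrac{1}{\lambda(x)}\widehat{p} &= 0 & \text{in }\O,
\end{align*}
with the mixed boundary conditions $\widehat{\bu}=\boldsymbol{0}$ on $\G_D$ and $[2\mu\beps(\widehat{\bu})-\widehat{p}\bI]\bn=\boldsymbol{0}$ on $\G_N$. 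First I would observe that the right-hand side $\boldsymbol{f}$ lies in $\L^2(\O)^d$ and, by the Babu\v{s}ka--Brezzi estimate already recorded after \eqref{def:system_source_complete}, the pair $(\widehat{\bu},\widehat{p})$ lies in $\H\times\Q$ with $\vertiii{(\widehat{\bu},\widehat{p})}\lesssim\|\boldsymbol{f}\|_{0,\O}$, so in particular $\|\widehat{\bu}\|_{1,\O}+\|\widehat{p}\|_{0,\O}\lesssim\|\boldsymbol{f}\|_{0,\O}$ with the hidden constant depending on $\mu$ and the Poincaré constant. This gives the base estimate; the work is to bootstrap it by one Sobolev derivative.

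\textbf{Key steps.} Second, I would invoke the classical elliptic regularity theory for second-order systems on Lipschitz domains with mixed boundary conditions: since $\mu(x)\in\mathrm{W}^{1,\infty}(\O)$ and $1/\lambda(x)\in\mathrm{W}^{1,\infty}(\O)$ (assumed in the excerpt), the coefficients of the system are Lipschitz, the leading part $-\boldsymbol{\nabla}\cdot(2\mu\beps(\cdot))$ is uniformly elliptic (Korn plus the bound $E_{\min}\le E\le E_{\max}$), and the presence of the zeroth-order term $\tfrac1\lambda\widehat{p}$ in the divergence constraint does not spoil ellipticity of the Stokes-type operator. The Grisvard-type results for polygonal/polyhedral or general Lipschitz domains (this is precisely what \cite{grisvard1986problemes} and \cite{MR4430561} provide) then yield the existence of a regularity index $s\in(0,1]$, depending only on the opening angles of $\O$, the partition $\G_D\cup\G_N$ into Dirichlet and Neumann parts, and the ellipticity/Lipschitz bounds on the coefficients, such that $(\widehat{\bu},\widehat{p})\in\H^{1+s}(\O)^d\times\H^s(\O)$ together with the a priori bound
$$
\|\widehat{\bu}\|_{1+s,\O}+\|\widehat{p}\|_{s,\O}\lesssim \|\boldsymbol{f}\|_{0,\O}+\|\widehat{\bu}\|_{1,\O}+\|\widehat{p}\|_{0,\O}.
$$
Third, I would absorb the lower-order terms $\|\widehat{\bu}\|_{1,\O}+\|\widehat{p}\|_{0,\O}$ on the right using the base estimate from the first step, obtaining the claimed inequality $\|\widehat{\bu}\|_{1+s,\O}+\|\widehat{p}\|_{s,\O}\lesssim\|\boldsymbol{f}\|_{0,\O}$, with the hidden constant depending on $\O$ and the Lam\'e coefficients $\mu,\lambda$.

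\textbf{Main obstacle.} The only genuinely delicate point is the robustness in $\lambda$: as $\nu\uparrow 1/2$ one has $\lambda\uparrow\infty$, $1/\lambda\to 0$, and the system degenerates to true Stokes, so one must check that the regularity index $s$ and the constant can be taken independent of $\lambda$ (or at least bounded as $\lambda\to\infty$). For the statement as written this is not required — the lemma only claims a constant depending on the Lam\'e coefficients — so the cleanest route is to cite \cite{grisvard1986problemes,MR4430561} for the fixed-$\lambda$ regularity and state the estimate with that dependence, deferring any $\lambda$-uniformity to the later sections where the weighted norm $\vertiii{\cdot}$ is used. A secondary technical care point is the handling of the corner/edge where $\G_D$ meets $\G_N$, which is exactly the situation covered by the mixed boundary-value regularity theory in the cited references; I would simply note that $s$ is determined by these geometric data and is strictly positive.
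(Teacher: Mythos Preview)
Your proposal is correct and matches the paper's approach: the paper does not give its own proof of this lemma but simply states it as a known regularity result, citing \cite{grisvard1986problemes} and \cite{MR4430561} in the statement itself. Your sketch spells out in more detail exactly what is implicit in that citation (well-posedness from Babu\v{s}ka--Brezzi, then the shift theorem for Stokes-type systems with Lipschitz coefficients on Lipschitz domains with mixed boundary conditions), and your remark on the $\lambda$-dependence is precisely what the paper records in the sentence immediately following the lemma.
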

Let us remark that Lemma~\ref{rmrk:additional} holds true
even for $\lambda\uparrow\infty$ (Further comments on this fact can be seen in \cite{MR3962898}).

As a consequence of this result, we conclude the compactness of  $\bT_{\nu}$.
In fact, since $\H^{1+s}(\Omega)^d\hookrightarrow \L^2(\Omega)^d$
we have that $\bT_{\nu}$ is a compact operator. Hence, we are in position to establish the spectral characterization of $\bT_{\nu}$.
\begin{theorem}
The spectrum of $\bT_{\nu}$  satisfies $\sp(\bT_{\nu})=\{0\}\cup\{\chi_k\}_{k\in\mathbb{N}}$, where $\{\chi_k\}_{k\in\mathbb{N}}$ is a sequence of positive eigenvalues.
\end{theorem}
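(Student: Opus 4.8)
The plan is to invoke the standard spectral theory for compact self-adjoint operators on a Hilbert space, since all the necessary hypotheses have already been assembled in the preceding discussion. Specifically, I would record three facts established above: (i) $\bT_\nu$ is a bounded linear operator on $\L^2(\Omega)^d$ with the estimate $\Vert\bT_\nu\boldsymbol{f}\Vert_{0,\O}\lesssim\Vert\boldsymbol{f}\Vert_{0,\O}$; (ii) $\bT_\nu$ is self-adjoint with respect to the $\L^2$ inner product; and (iii) by Lemma~\ref{rmrk:additional} together with the compact embedding $\H^{1+s}(\Omega)^d\hookrightarrow\L^2(\Omega)^d$, $\bT_\nu$ is compact. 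Then the classical Hilbert-Schmidt (spectral) theorem for compact self-adjoint operators applies directly and yields that $\sp(\bT_\nu)$ is a countable set with $0$ as its only possible accumulation point, consisting of real eigenvalues of finite multiplicity.

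Next, I would argue positivity of the nonzero eigenvalues. Taking $\boldsymbol{f}=\bu$ an eigenfunction with $\bT_\nu\bu=\chi\bu$, $\chi\neq0$, and testing the source problem \eqref{def:system_source_complete} (with $\widehat{\bu}=\chi\bu$, $\widehat{p}=\chi p$ for the associated pressure) by $\bv=\widehat{\bu}$ and $q=\widehat{p}$, and subtracting, one obtains
$$
\chi\,d(\bu,\bu)=\chi^2\bigl[a(\bu,\bu)+c(p,p)\bigr]\ge 0.
$$
Since $d(\bu,\bu)=\Vert\bu\Vert_{0,\O}^2>0$ for an eigenfunction $\bu\neq\boldsymbol{0}$ (here I use that $\bu=\boldsymbol{0}$ would force $\chi=0$ via the inf-sup condition on $b$), and $\chi^2>0$, it follows that $\chi>0$. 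Hence every nonzero element of $\sp(\bT_\nu)$ is a positive eigenvalue, which we enumerate as $\{\chi_k\}_{k\in\mathbb{N}}$; if the sequence is infinite it converges to $0$. Finally, $0\in\sp(\bT_\nu)$ because $\bT_\nu$ is compact on an infinite-dimensional space (it cannot be invertible), or more concretely because $b$ has a nontrivial kernel complement and $\bT_\nu$ annihilates suitable high-frequency modes only in the limit — in any case $0$ belongs to the spectrum.

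The argument is essentially a bookkeeping exercise; the only genuine content — the regularity estimate that gives compactness — is already provided by Lemma~\ref{rmrk:additional}, and self-adjointness and boundedness were noted just before the statement. Thus I do not anticipate a real obstacle. The one point requiring mild care is the sign argument for positivity: one must make sure the pressure $p$ paired with an eigenfunction $\bu$ is well defined and that the test-function manipulation is legitimate, i.e.\ that $(\chi\bu,\chi p)$ genuinely solves \eqref{def:system_source_complete} with datum $\bu$, which is exactly the equivalence between eigenpairs of $\bT_\nu$ and solutions of \eqref{def:limit_system_eigen_complete} recorded above. With that in hand the proof is immediate.
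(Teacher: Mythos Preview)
Your proposal is correct and aligns with the paper's approach: the paper does not give a detailed proof but simply states the theorem as an immediate consequence of the compactness and self-adjointness of $\bT_\nu$ already established, invoking the classical spectral theory for compact self-adjoint operators. Your write-up just fills in the standard details (including the positivity argument via testing \eqref{def:system_source_complete}), which is exactly what the paper leaves implicit.
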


On the other hand, from  \cite{MR0975121} we have the following regularity result  for the eigenfunctions of the eigenproblem \eqref{def:limit_system_eigen_complete}.
\begin{theorem}
	\label{th:reg_velocity}
	If $(\kappa, (\bu,p))\in \mathbb{R}\times\H\times\Q$ solves \eqref{def:limit_system_eigen_complete}, then, there exists $r>0$ such that $\bu\in\H^{1+r}(\Omega)^d$ and $p\in \H^r(\Omega)$. Moreover, the following estimate  holds
	\begin{equation*}
		\|\bu\|_{1+r,\O}+\|p\|_{r,\O}\lesssim \|\bu\|_{0,\O}.
	\end{equation*}
\end{theorem}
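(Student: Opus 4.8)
The plan is to derive the regularity of the eigenfunctions $(\bu,p)$ from the linear source-problem regularity in Lemma~\ref{rmrk:additional} via a bootstrap (or rather, a single-step) argument. The key observation is that an eigenpair $(\kappa,(\bu,p))$ of \eqref{def:limit_system_eigen_complete} is, in particular, a solution of the source problem \eqref{def:system_source_complete} with the specific right-hand side $\boldsymbol{f}:=\kappa\bu$. Indeed, comparing \eqref{def:limit_system_eigen_complete} with \eqref{def:system_source_complete}, we see that $(\bu,p)$ solves $a(\bu,\bv)+b(\bv,p)=d(\kappa\bu,\bv)$ and $b(\bu,q)-c(p,q)=0$, which is exactly the source problem data $\boldsymbol{f}=\kappa\bu\in\L^2(\O)^d$. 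Hence $\bu=\bT_\nu(\kappa\bu)$.

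First I would invoke Lemma~\ref{rmrk:additional} with $\boldsymbol{f}=\kappa\bu$: there exists $s\in(0,1]$ such that $\bu\in\H^{1+s}(\O)^d$, $p\in\H^s(\O)$, and
$$
\|\bu\|_{1+s,\O}+\|p\|_{s,\O}\lesssim \|\kappa\bu\|_{0,\O}=\kappa\|\bu\|_{0,\O}\lesssim\|\bu\|_{0,\O},
$$
where the last step absorbs the fixed eigenvalue $\kappa$ into the hidden constant (one may also note $\kappa=1/\chi$ is bounded above since $\chi$ belongs to the discrete spectrum bounded away from $0$ by the well-posedness estimate on $\bT_\nu$). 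Setting $r:=s>0$ then gives the claimed membership $\bu\in\H^{1+r}(\O)^d$, $p\in\H^r(\O)$ together with the stated a priori bound. This already yields the statement of Theorem~\ref{th:reg_velocity} with the same exponent as the source problem; a finer argument, citing the sharper elliptic-regularity results of \cite{MR0975121} for the Stokes-type system, can be used to identify $r$ more precisely (for instance $r$ depending only on the interior angles of $\O$ and the Dirichlet/Neumann partition, uniformly in $\lambda$), but this refinement is not needed for the estimate above.

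The only genuine subtlety — and the step I expect to be the main obstacle — is tracking the dependence of the hidden constant on the Lamé coefficients and, above all, its uniformity as $\lambda\uparrow\infty$. Here one leans on the remark immediately following Lemma~\ref{rmrk:additional}, namely that the additional regularity holds true even for $\lambda\uparrow\infty$, with the constant controlled in the limit Stokes regime; combined with the $\nu$-independence of $\mu$ built into the scaling \eqref{eq:scaled}, this makes the estimate $\|\bu\|_{1+r,\O}+\|p\|_{r,\O}\lesssim\|\bu\|_{0,\O}$ robust in the incompressible limit. Everything else is a direct substitution, so the proof is short once the self-referential identity $\boldsymbol{f}=\kappa\bu$ is exploited.
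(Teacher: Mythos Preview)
Your argument is correct: recognizing that an eigenpair $(\kappa,(\bu,p))$ solves the source problem \eqref{def:system_source_complete} with $\boldsymbol{f}=\kappa\bu\in\L^2(\O)^d$ and then invoking Lemma~\ref{rmrk:additional} with $r:=s$ is exactly the standard bootstrap, and it yields the stated estimate with $\kappa$ absorbed into the hidden constant. The paper itself does not supply a proof of Theorem~\ref{th:reg_velocity}; it simply cites \cite{MR0975121} for the regularity of the Stokes-type system, so your write-up is in fact more explicit than what appears in the text while remaining fully consistent with its framework (in particular with the remark that Lemma~\ref{rmrk:additional} is uniform as $\lambda\uparrow\infty$).
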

Now,
Let us end this section introducing the bilinear form
$\mathcal{A}:(\H\times \Q)\times(\H\times\Q)\rightarrow\mathbb{R}$ defined by
$$
\mathcal{A}((\bu,p),(\bv,q)):=a(\bu,\bv)+b(\bv,p)+b(\bu,q)+ c(p,q),
$$
which allows us to rewrite the eigenvalue problem as follows. Find $\kappa\in\mathbb{R}$ and $(\boldsymbol{0},0)\neq (\bu,p)\in\H\times\Q$ such that
\begin{equation}
\label{eq:eigen_A}
\mathcal{A}((\bu,p),(\bv,q))=\kappa d(\bu,\bv)\quad\forall (\bv,q)\in\H\times Q.
\end{equation}
Let us remark that $\mathcal{A}(\cdot,\cdot)$ is a bounded bilinear form, in the sense that there exists a positive constant $\widehat{C}:=\max\{2,\mu_{min}^{-1/2}\mu_{max}^{1/2}\}$
such that
$$
\mathcal{A}((\bu,p),(\bv,q))\leq\widehat{C}\vertiii{(\bu,p)}_{\H\times Q}\vertiii{(\bv,q)}_{\H\times Q}\quad\forall(\bu,p),(\bv,q)\in\H\times\Q.
$$
Next we state the stability result for the bilinear form $\mathcal{A}(\cdot,\cdot)$.
\begin{lemma}\label{lem:stab-A}
For any $(\bu,p)\in\H\times\Q$, there exists $(\bv,q)\in\H\times\Q$ with $\vertiii{(\bv,q)}\lesssim \vertiii{(\bu,p)}$ such that
$$
\vertiii{(\bu,p)}^2\lesssim \mathcal{A}((\bu,p),(\bv,q)).
$$
\end{lemma}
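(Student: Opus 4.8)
The plan is to use the standard Boffi–Brezzi argument for perturbed saddle point problems, adapted to the weighted norm $\vertiii{\cdot}_{\H\times Q}$. Given $(\bu,p)\in\H\times\Q$, the naive choice $(\bv,q)=(\bu,-p)$ gives
$$
\mathcal{A}((\bu,p),(\bu,-p))=a(\bu,\bu)+c(p,p)=2\int_\O\mu|\beps(\bu)|^2+\int_\O\tfrac1\lambda p^2,
$$
which, thanks to Korn's inequality in the weighted form, controls $\Vert\mu^{1/2}\nabla\bu\Vert_{0,\O}^2+\Vert\lambda^{-1/2}p\Vert_{0,\O}^2$ but misses the crucial term $\Vert\mu^{-1/2}p\Vert_{0,\O}^2$. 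First I would recover that missing term: by the inf-sup condition for $b(\cdot,\cdot)$ stated just above Lemma~\ref{rmrk:additional}, there is $\bw\in\H$ with $\Vert\mu^{1/2}\nabla\bw\Vert_{0,\O}\le 1$ and $b(\bw,p)=-\int_\O p\,\nabla\cdot\bw\ge \tfrac{\beta_2}{2}\Vert\mu^{-1/2}p\Vert_{0,\O}$ (one may normalize $\bw$ so that $\Vert\mu^{1/2}\nabla\bw\Vert_{0,\O}=\Vert\mu^{-1/2}p\Vert_{0,\O}$). Testing with $(\bw,0)$ gives
$$
\mathcal{A}((\bu,p),(\bw,0))=a(\bu,\bw)+b(\bw,p)\ge -\widehat C\,\Vert\mu^{1/2}\nabla\bu\Vert_{0,\O}\Vert\mu^{-1/2}p\Vert_{0,\O}+\tfrac{\beta_2}{2}\Vert\mu^{-1/2}p\Vert_{0,\O}^2,
$$
using Cauchy–Schwarz and the definition of $a$.

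Next I would form the combination $(\bv,q):=(\bu+\delta\bw,\,-p)$ for a small fixed parameter $\delta>0$ to be chosen. Adding the two estimates and applying Young's inequality to the cross term $-\widehat C\delta\Vert\mu^{1/2}\nabla\bu\Vert_{0,\O}\Vert\mu^{-1/2}p\Vert_{0,\O}$, absorbing half of it into the coercive $a(\bu,\bu)$ term and the rest into the $\tfrac{\beta_2\delta}{2}\Vert\mu^{-1/2}p\Vert_{0,\O}^2$ term, I obtain for $\delta$ small enough depending only on $\beta_2$, $\widehat C$ and the Korn constant,
$$
\mathcal{A}((\bu,p),(\bv,q))\gtrsim \Vert\mu^{1/2}\nabla\bu\Vert_{0,\O}^2+\Vert\mu^{-1/2}p\Vert_{0,\O}^2+\Vert\lambda^{-1/2}p\Vert_{0,\O}^2=\vertiii{(\bu,p)}_{\H\times Q}^2.
$$
The norm bound $\vertiii{(\bv,q)}\lesssim\vertiii{(\bu,p)}$ follows from the triangle inequality together with $\Vert\mu^{1/2}\nabla\bw\Vert_{0,\O}=\Vert\mu^{-1/2}p\Vert_{0,\O}\le\vertiii{(\bu,p)}$ and the fact that $\bw$ contributes nothing to the $q$-component; note the $c(p,q)$ contribution from replacing $q$ by $-p$ comes with the favorable sign and is harmless.

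The main obstacle I anticipate is making the weighted Korn inequality fully explicit with a constant independent of the Lamé parameters: since $\mu(x)=E(x)/2$ is independent of $\nu$ and bounded between $E_{\min}/2$ and $E_{\max}/2$, a weighted Korn inequality $\Vert\mu^{1/2}\nabla\bv\Vert_{0,\O}^2\lesssim 2\int_\O\mu|\beps(\bv)|^2$ does hold with a constant depending only on $\O$, $\Gamma_D$ and the ratio $E_{\max}/E_{\min}$ — crucially not on $\lambda$ — so robustness in $\nu$ is preserved; one must simply check that the $\mathrm{W}^{1,\infty}$ assumption on $\mu$ and the ellipticity bounds on $E$ are what is invoked, and that $\delta$ in the construction depends only on these $\nu$-independent quantities. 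All remaining steps are routine Cauchy–Schwarz, Young, and triangle-inequality manipulations.
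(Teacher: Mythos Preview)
The paper does not actually supply a proof of Lemma~\ref{lem:stab-A}; the statement is given and the text moves directly to Section~\ref{sec:DG}. The result is invoked again in the proof of Lemma~\ref{lemma5} via a citation to \cite[Lemma~2.2]{khan2023finite}, so the intended argument is the one in that reference. Your construction---test with $(\bu,-p)$ to recover the $a$- and $c$-controlled pieces of the norm, then add a small multiple of an inf-sup witness $\bw$ for $b(\cdot,\cdot)$ to recover $\Vert\mu^{-1/2}p\Vert_{0,\O}^2$, and balance with Young's inequality---is precisely the standard Boffi--Brezzi--Fortin argument for perturbed saddle-point problems in parameter-weighted norms, and it is correct. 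Your observation that the Korn constant and the parameter $\delta$ depend only on $\Omega$, $\Gamma_D$, $\beta_2$ and the ratio $E_{\max}/E_{\min}$, and \emph{not} on $\lambda$, is exactly the robustness point the paper needs.

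One small caveat: with the paper's stated definition $\mathcal{A}((\bu,p),(\bv,q))=a(\bu,\bv)+b(\bv,p)+b(\bu,q)+c(p,q)$, testing with $(\bu,-p)$ produces $a(\bu,\bu)-c(p,p)$, not $a(\bu,\bu)+c(p,p)$ as you wrote. Comparing with \eqref{def:limit_system_eigen_complete}, where the second equation carries $-c(p,q)$, the sign on $c$ in the definition of $\mathcal{A}$ appears to be a typo in the paper; your computation is correct for the consistent choice $\mathcal{A}=a+b+b-c$. If one insists on the printed sign, the remedy is trivial (take $q=+p$ and absorb the extra cross term $2b(\bu,p)$ by the same Young-inequality mechanism), and the structure of your proof is unchanged.
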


\section{The DG method}
\label{sec:DG}
Now our aim is to  introduce the DG methods. In order to do this, we need to set some notations and definitions, inherent for these type of methods, as DG spaces, jumps, averages, and discrete bilinear forms.  For the sake of completeness, in this section we will rigorously monitor the constants in order to provide bounds that explicitly reflect the importance of the physical parameters in the stability of the method.
\subsection{Preliminaries}
Let $\mathcal{T}_h$ be a shape regular family of meshes which subdivide the domain $\bar \Omega$ into  
triangles/tetrahedra that we denote by $K$. Let us denote by $h_K$
the diameter of any element $K\in\mathcal{T}_h$ and let $h$ be the maximum of the diameters of all the
elements of the mesh, i.e. $h:= \max_{K\in \cT_h} \{h_K\}$.

Let $F$ be a closed set. We say that  $F\subset \overline{\Omega}$
is an interior edge/face if $F$ has a positive $(n-1)$-dimensional 
measure and if there are distinct elements $K$ and $K'$
such that $F =\bar K\cap \bar K'$. A closed 
subset $F\subset \overline{\Omega}$ is a boundary edge/face if
there exists $K\in \cT_h$ such that $F$ is an edge/face
of $K$ and $F =  \bar K\cap \G$. 
Let $\cF_h^0$ and $\cF_h^\partial$ be  the sets  of interior edges/faces
and  boundary edges/faces, respectively.
We assume that the boundary mesh $\cF_h^\partial$ is
compatible with the partition $\G = \G_{D} \cup \G_{N}$, namely,
\[
\bigcup_{F\in \cF_h^D} F = \G_{D} \qquad \text{and} \qquad \bigcup_{F\in \cF_h^N} F = \G_N,
\]
where $\cF_h^D:= \set{F\in \cF_h^\partial; \quad F\subset \G_D}$ and 
$\cF_h^N:= \set{F\in \cF_h^\partial: \quad F\subset \G_N}$.
Also we denote   $\cF_h := \cF_h^0\cup \cF_h^\partial$ and $\cF^*_h:= \cF_h^{0} \cup \cF_h^{D}$.
Also, for any element $K\in \cT_h$, we introduce the set
$\cF(K):= \set{F\in \cF_h:\,\, F\subset \partial K}$  of edges/faces composing the boundary of $K$.

For any $t\geq 0$, we define the following broken Sobolev space 
\[
 \H^t(\cT_h)^n:=
 \set{\bv \in [\L^2(\O)]^n: \quad \bv|_K\in [\H^t(K)]^n\quad \forall K\in \cT_h}.
\]
%
Also, the space of the skeletons of the triangulations
$\cT_h$ is defined  by $ \L^2(\cF_h):= \prod_{F\in \mathcal{F}_h} \L^2(F).$

In the forthcoming analysis, $h_\cF\in \L^2(\cF_h)$
will represent the piecewise constant function 
defined by $h_\cF|_F := h_F$ for all $F \in \cF_h$,
where $h_F$ denotes the diameter of an edge/face $F$.

Let  $\cP_m(\cT_h)$ be  the space of piecewise polynomials respect with to
 $\cT_h$  of degree at most $m\geq 0$;
namely,  
\[
 \cP_m(\cT_h) :=\left\{ \bv\in [\L^2(\O)]^d:\, v|_K \in [\cP_m(K)]^d, \forall K\in \cT_h \right\}. 
\]
Let $k\geq 1$.  To approximate the displacement we introduce the space 
$$
\H_h:=\{\bv_h\in [\L^2(\O)]^d\,:\quad \bv_h|_K\in [\cP_k(K)]^d,\,\,\,\forall K\in\CT_h\},
$$
and for the pressure, the finite element space is $\Q_h:=\{\bv_h\in 	\L^2(\O)\,:\, \bv_h|_K\in \cP_{k-1}(K),\,\,\,\forall K\in\CT_h\}$.

Let us assume $\cF(K)\cap \cF(K')\neq\emptyset$.  For a scalar field $q$, we define the average $\mean{q}\in \L^2(\cF_h)$ and the  jump  $\jump{q}\in \L^2(\cF_h)$ by $\mean{q} := (q_K + q_{K'})/2$ and $\jump{q} := q_K\n_K + q_{K'}\n_{K'} $, respectively,  where $\n_K$ is the outward unit normal vector to $\partial K$ and $q_K$ represents the restriction $q|_K$. Similarly, if $\bv$ is a vector field, we define the average $\mean{\bv}\in [\L^2(\cF_h)]^n$ by $\mean{\bv} := (\bv_K + \bv_{K'})/2$, the scalar jump $\jump{\underline{\bv}}\in \L^2(\cF_h)$ by  $\jump{\underline{\bv}} := \bv_K \cdot\n_K + \bv_{K'}\cdot\n_{K'}$, and the tensor or total jump  $\jump{\bv}\in [\L^2(\cF_h)]^{d\times d}$ as $\jump{\bv} := \bv_K \otimes\n_K + \bv_{K'}\otimes\n_{K'}$. Finally, if $\btau$ is a tensor field, we define the corresponding average and jump as $\mean{\btau} := (\btau_K + \btau_{K'})/2 \in [\L^2(\cF_h)]^{d\times d}$ and $\jump{\btau\bn} := \btau_K\n_K + \btau_{K'}\n_{K'}\in [\L^2(\cF_h)]^{d} $, respectively. If we have an element $K$ such that a facet $\cF$ satisfies $\cF\in \cF_h^\partial$, we can obtain the definition of average and jump in the domain boundary by taking $K=K'$ and $K'=0$ in the above definitions, respectively.


Inspired by the analysis of \cite{MR2220929}, let us define $\H(h):=\H+\H_h$ which we endow with the following norm
$$
\|\bv\|_{\H(h)}^2=\|\mu^{1/2}(x)\nabla_h\bv\|_{0,\O}^2+\|\mu^{1/2}(x)h_{\cF}^{-1/2}\jump{\bv}\|_{0,\mathcal{F}_h}^2,
$$
which coincides with the natural norm of $\H$. On the other hand, we also introduce the following norm for $\Q_h$
$$
\|q_h\|_{\Q}^2:=\|\mu^{-1/2}(x)q_h\|_{0,\O}^2+\|\lambda^{-1/2}(x)q_h\|_{0,\O}^2\quad\forall q_h\in \Q_h,
$$
where the natural product norm for the discrete spaces is defined by  $\vertiii{(\bv,q)}_{\H(h)\times \Q}^2:=\|\bv_h\|_{\H(h)}^2+\|q\|_\Q^2$.
Finally we introduce the following trace inequality  \cite{MR4673997}
\begin{equation}\label{eq:dg_trace}
\|h_{\mathcal{F}}^{1/2}\mean{\xi(x)v}\|_{0,\cF}\leq C_{\texttt{tr}}\|v\|_{0,\O}\qquad\forall v\in\cP_k(\mathcal{T}_h),
\end{equation}
where $\xi(x)\in \L^{\infty}(\O)$ and  $C_{\texttt{tr}}>0$ is a constant depending on $\|\xi(x)\|_{0,\infty}$.
\subsection{Symmetric and nonsymmetric DG schemes} With the discrete spaces defined previously at hand, we introduce the discrete counterpart of \eqref{def:limit_system_eigen_complete} as follows: Find $\kappa_h\in\mathbb{C}$ and $(\boldsymbol{0},0)\neq(\bu_h,p_h)\in\H_h\times\Q_h$ such that
\begin{equation}\label{eq:weak_stokes_system_dg}
\begin{array}{rcll}
 a_h(\bu_h,\bv_h)+b_h(\bv_h,p_h) & = & \kappa_h(\bu_h,\bv_h) &\forall\ \bv_h\in \H_h,\\
\ds b_h(\bu_h,q_h)-c(p_h,q_h) & = & 0 &\forall\ q_h\in \Q_h,
\end{array}
\end{equation}
where the sesquilinear form $a_h:\H_h\times\H_h\rightarrow\mathbb{C}$ is defined by
\begin{multline}
\label{eq:Ah}
a_h(\bu_h,\bv_h):=\int_{\CT_h}2\mu(x)\beps_h(\bu_h):\beps_h(\bv_h)+ \int_{\cF^*_h}\frac{\texttt{a}_S}{h_{\cF}}2\mu(x)\jump{\bu_h}\cdot\jump{\bv_h}\\
- \int_{\cF^*_h}\mean{2\mu(x)\beps_h(\bu_h)}\cdot\jump{\bv_h}
-\epsilon \int_{\cF^*_h}\mean{2\mu(x)\beps_h(\bv_h)}\cdot\jump{\bu_h},
\end{multline}
where $\beps_h:=\frac{1}{2}(\nabla_h(\cdot)+(\nabla_h(\cdot))^{\texttt{t}})$ and  $\texttt{a}_S>0$ is a positive constant which we will refer as the stabilization parameter and $ \varepsilon\in\{-1,0,1\}$.
On the other hand,  we define the bounded bilinear form   $b_h:\H_h\times\Q_h\rightarrow \mathbb{R}$  by
$$
b_h(\bv_h,q_h):=-\int_{\CT_h}\div_h\bv_h q_h+\int_{\cF_h^*}\mean{q_h}\jump{\underline{\bv_h}},\quad\forall\bv\in\H_h,\,\forall q_h\in Q_h,
$$
and $c:\Q_h\times \Q_h\rightarrow\mathbb{R}$ is defined by 
$\ds c(p_h,q_h):=\int_{\O}\frac{p_hq_h}{\lambda(x)}$.

We have that $a_h(\cdot,\cdot)$ is  bounded on the broken weighted norm.  Indeed, for $\bu_h,\bv_h\in\H_h$ and  applying \eqref{eq:dg_trace} we have 
\begin{multline*}
|a_h(\bu_h,\bv_h)|\leq \left|\int_{\O} 2\mu(x)\beps_h(\bu_h):\beps_h(\bv_h)\right|+
\left| \int_{\cF_h^*}\frac{\texttt{a}_S}{h_{\cF_h^*}}2\mu(x)\jump{\bu_h}\cdot\jump{\bv_h}\right|\\
+\left|\int_{\cF_h^*}\mean{2\mu(x)\beps_h(\bu_h)}\cdot\jump{\bv_h}\right|+\left|\epsilon\int_{\cF_h^*}\mean{2\mu(x)\beps_h(\bv_h)}\cdot\jump{\bu_h}\right|\\
\leq 2C\|\mu^{1/2}(x)\nabla_h\bu_h\|_{0,\O}\|\mu^{1/2}(x)\nabla_h\bv_h\|_{0,\O}\\
+\texttt{a}_S\|\mu^{1/2}h_{\cF^*}^{-1/2}\jump{\bu_h}\|_{0,\cF_h^*}\|h_{\cF_h}^{-1/2}h_{\cF_h^*}\jump{\bv_h}\|_{0,\cF_h^*}\\
+2\|h_\cF^{1/2}\mean{\mu(x)\nabla\bu_h}\|_{0,\cF_h^*}\|h_{\cF}^{-1/2}\jump{\bv_h}\|_{0,\cF_h^*}
+2\epsilon\|h_\cF^{1/2}\mean{\mu(x)\nabla\bv_h}\|_{0,\cF_h^*}\|h_{\cF}^{-1/2}\jump{\bu_h}\|_{0,\cF_h^*}\\
\leq 2C\|\mu^{1/2}(x)\nabla_h\bu_h\|_{0,\O}\|\mu^{1/2}(x)\nabla_h\bv_h\|_{0,\O}\\
+\texttt{a}_SC_{E_{\min}}\|\mu^{1/2}(x)h_{\cF^*}^{-1/2}\jump{\bu_h}\|_{0,\cF_h^*}\|\mu^{1/2}(x)h_{\cF}\jump{\bv_h}\|_{0,\cF_h^*}\\
+2C_{\texttt{tr}}C_{E_{\min}}^2\|\mu^{1/2}(x)\nabla\bu_h\|_{0,\O}\|\mu^{1/2}(x)h_{\cF}^{-1/2}\jump{\bv_h}\|_{0,\cF_h^*}\\
+2\epsilon C_{\texttt{tr}}C_{E_{\min}}^2\|\mu^{1/2}(x)\nabla\bv_h\|_{0,\O}\|\mu^{1/2}(x)h_{\cF}^{-1/2}\jump{\bu_h}\|_{0,\cF_h^*}\\
\leq C_1\left(\|\mu^{1/2}(x)\nabla_h\bu_h\|_{0,\O}^2 +\|\mu^{1/2}(x)h_{\cF}^{-1/2}\jump{\bu_h}\|_{0,\cF_h^*}^2\right)^{1/2}\\
\times \left(\|\mu^{1/2}(x)\nabla_h\bv_h\|_{0,\O}^2 +\|\mu^{1/2}(x)h_{\cF}^{-1/2}\jump{\bv_h}\|_{0,\cF_h^*}^2\right)^{1/ 2}= C_1\|\bu_h\|_{\H(h)}\|\bv_h\|_{\H(h)},
\end{multline*}
where $C_1:=2\max\{C,C_{E_{\min}}\texttt{a}_S, C_{\texttt{tr}} C_{E_{\min}}^2, \epsilon C_{\texttt{tr}} C_{E_{\min}}^2,1\}$.

For $b_h(\cdot,\cdot)$ the continuity is as follows
\begin{multline*}
|b_h(\bv_h,q_h)|\leq \left|\int_{\O}\div_h\bv_h q_h\right|+\left|\int_{\cF_h^*}\mean{q_h}\jump{\underline{\bv_h}}\right|\\
\leq \|\div_h\bv_h\|_{0,\O}\|q_h\|_{0,\O}+\|h^{1/2}_{\cF}\mean{q_h}\|_{0,\cF_h^*}\|h_{\cF}^{-1/2}\jump{\underline{\bv_h}}\|_{0,\cF_h^*}\\
\leq C\|\nabla_h\bv_h\|_{0,\O}\|q_h\|_{0,\O}+C_{\texttt{tr}}C_{E_{\min}}\|q_h\|_{0,\O}\|\mu^{1/2}(x)h_{\cF}^{-1/2}\jump{\underline{\bv_h}}\|_{0,\cF_h^*}\\
\leq CC_{E_{\min}}C_{E_{\max}}\|\mu^{1/2}(x)\nabla_h\bv_h\|_{0,\O}\|\lambda^{-1/2}(x)q_h\|_{0,\O}\\+C_{\texttt{tr}}C_{E_{\max}}C_{E_{\min}}\|\mu^{-1/2}(x)q_h\|_{0,\O}\|\mu^{1/2}h_{\cF}^{-1/2}\jump{\underline{\bv_h}}\|_{0,\cF_h^*}\\
\leq C_2 \left(\|\mu^{1/2}(x)\nabla_h\bv_h\|_{0,\O}^2 +\|\mu^{1/2}(x)h_{\cF}^{-1/2}\jump{\underline{\bv_h}}\|_{0,\cF_h^*}^2\right)^{1/ 2}\\
\times (\|\mu^{-1/2}(x) q_h\|_{0,\O}^2+\|\lambda^{-1/2}(x) q_h\|_{0,\O}^2)^{1/2}=C_2\vertiii{(\bv_h,q_h)}_{\H(h)\times Q},
\end{multline*}
where $C_2:=\max\{CC_{E_{\min}}C_{E_{\max}},C_{\texttt{tr}}C_{E_{\max}}C_{E_{\min}} \}$.
\begin{remark}
Observe that the parameter  $ \epsilon\in\{-1,0,1\}$ dictates  if the IPDG method is symmetric or non-symmetric. More precisely,  if $\epsilon=1$ we obtain
the classic symmetric interior penalty method (SIP),
if $\epsilon=-1$ we obtain the non-symmetric interior penalty method (NIP)
and if $\epsilon=0$ the incomplete interior penalty method (IIP). Clearly when non-symmetric methods are considered, complex computed eigenvalues are expected when
the spectrum is approximated. Moreover, the SIP attains  optimal order of convergence for the approximation of the eigenvalues and eigenfunctions, whereas
the NIP and IIP methods deliver suboptimal orders.  
\end{remark}

Finally, the IPDG discretization of \eqref{eq:eigen_A} reads as follows: Find $\kappa_h\in\mathbb{C}$ and $(\boldsymbol{0},0)\neq(\bu_h,p_h)\in\H_h\times\Q_h$ such that
\begin{equation}
\label{eq:dg_eigen}
\mathcal{A}_h((\bu_h,p_h),(\bv_h,q_h))=\kappa_h(\bu_h,\bv_h)\quad\forall (\bv_h,q_h)\in\H_h\times\Q_h.
\end{equation}

To establish the well posedness of our discrete problem \eqref{eq:weak_stokes_system_dg}, we have from   \cite[Proposition 10]{MR1886000} the following discrete inf-sup condition
$$
\displaystyle\sup_{\btau_h\in\H_h}\frac{b_h(\btau_h,q_h)}{\|\btau_h\|_{ \H(h)}}\geq\beta\|\mu^{-1/2}(x)\,q_h\|_{0,\O}\quad\forall q_h\in\Q_h,
$$
where the constant $\beta>0$ is independent of $h$. On the other hand, let us define the discrete kernel $\mathcal{K}_h$ of $b_h(\cdot,\cdot)$ as follows
$$
\mathcal{K}_h:=\{\btau_h\in\H_h\,:\,  b_h(\btau_h,q_h)=0\,\,\,\,\forall q_h\in\Q_h\}.
$$
With this space at hand, we prove the following coercivity result for $a_h(\cdot,\cdot)$.
\begin{lemma}[ellipticity of $a_h(\cdot,\cdot)$]
\label{lmm:elipt_disc}
For any $\varepsilon\in\{-1,0,1\}$, there exists a positive parameter $\texttt{a}^*$ such that for all $\texttt{a}_S\geq \texttt{a}^*$ there holds
$$a_h(\bv_h,\bv_h)\geq \alpha\|\bv_h\|_{\H(h)}^2\quad\forall\bv_h\in  \mathcal{K}_h,$$
where $\alpha>0$ is independent of $h$.
\end{lemma}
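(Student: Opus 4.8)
The statement is the standard coercivity (Gårding-type, but genuinely coercive here) estimate for the interior penalty bilinear form $a_h(\cdot,\cdot)$ restricted to the discrete kernel $\mathcal{K}_h$. The plan is to treat the three cases of $\varepsilon$ in a unified way, using the sign in front of the consistency term to our advantage. Write
\[
a_h(\bv_h,\bv_h)=\int_{\CT_h}2\mu(x)|\beps_h(\bv_h)|^2+\int_{\cF_h^*}\frac{\texttt{a}_S}{h_\cF}2\mu(x)|\jump{\bv_h}|^2-(1+\varepsilon)\int_{\cF_h^*}\mean{2\mu(x)\beps_h(\bv_h)}\cdot\jump{\bv_h}.
\]
For $\varepsilon=-1$ (NIP) the cross term vanishes and coercivity is immediate with $\alpha$ independent of $\texttt{a}_S$ (any $\texttt{a}_S>0$ works, and on the kernel we additionally need a broken Korn inequality to pass from $\|\mu^{1/2}\beps_h(\bv_h)\|_{0,\O}$ to $\|\mu^{1/2}\nabla_h\bv_h\|_{0,\O}$ — see below). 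For $\varepsilon\in\{0,1\}$ the cross term must be absorbed: bound it by the trace inequality \eqref{eq:dg_trace}, which gives
\[
\Bigl|(1+\varepsilon)\int_{\cF_h^*}\mean{2\mu(x)\beps_h(\bv_h)}\cdot\jump{\bv_h}\Bigr|\le 2(1+\varepsilon)C_{\texttt{tr}}C_{E_{\min}}^2\,\|\mu^{1/2}\nabla_h\bv_h\|_{0,\O}\,\|\mu^{1/2}h_\cF^{-1/2}\jump{\bv_h}\|_{0,\cF_h^*},
\]
then apply Young's inequality with a parameter $\delta$ to split this as $\delta\|\mu^{1/2}\nabla_h\bv_h\|_{0,\O}^2+\delta^{-1}(\text{const})\|\mu^{1/2}h_\cF^{-1/2}\jump{\bv_h}\|_{0,\cF_h^*}^2$. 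Choosing $\delta$ small enough to leave a positive fraction of the volume term, and then $\texttt{a}^*$ large enough (proportional to the square of the constant $C_{\texttt{tr}}C_{E_{\min}}^2$, hence scaling like $k^2$ through the trace constant) so that $2\texttt{a}_S-\delta^{-1}(\text{const})>0$ for all $\texttt{a}_S\ge\texttt{a}^*$, yields
\[
a_h(\bv_h,\bv_h)\ge c_0\Bigl(\|\mu^{1/2}\beps_h(\bv_h)\|_{0,\O}^2+\|\mu^{1/2}h_\cF^{-1/2}\jump{\bv_h}\|_{0,\cF_h^*}^2\Bigr)
\]
for some $c_0>0$ independent of $h$.

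**From $\beps_h$ to $\nabla_h$: the discrete Korn inequality.** The norm $\|\cdot\|_{\H(h)}$ involves the full broken gradient, so the last step is a broken (weighted) Korn inequality on $\H(h)$: there is $C_K>0$, independent of $h$, with
\[
\|\mu^{1/2}\nabla_h\bv_h\|_{0,\O}^2\le C_K\Bigl(\|\mu^{1/2}\beps_h(\bv_h)\|_{0,\O}^2+\|\mu^{1/2}h_\cF^{-1/2}\jump{\bv_h}\|_{0,\cF_h^*}^2\Bigr)\qquad\forall\bv_h\in\H_h,
\]
which is the discontinuous Korn inequality of Brenner (the jump term controls the failure of $\bv_h$ to be globally $H^1$, and the Dirichlet faces in $\cF_h^*$ kill the rigid-body modes since $|\G_D|>0$); the weights $\mu(x)$ are harmless because $\mu\in W^{1,\infty}$ is bounded above and below on $\O$ by $E_{\min}/2$ and $E_{\max}/2$, so one can pull them out up to the constant $C_{E_{\min}}C_{E_{\max}}$. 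Combining this with the previous display gives
\[
a_h(\bv_h,\bv_h)\ge \alpha\|\bv_h\|_{\H(h)}^2,\qquad \alpha:=\frac{c_0}{1+C_K},
\]
which is the assertion; note $\alpha$ depends on $\mu_{\min},\mu_{\max}$, the Korn constant and $C_{\texttt{tr}}$ but not on $h$, and the threshold $\texttt{a}^*$ is uniform in $\lambda$ since $a_h$ does not see $\lambda$.

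**Where the real work is.** Everything outside the Korn inequality is the routine Young-inequality absorption argument already rehearsed in the continuity proofs above, so the genuine ingredient is the broken Korn inequality — more precisely, the fact that it holds on all of $\H_h$ with the Dirichlet jump faces included and with an $h$-independent constant; this is the step I expect to cite rather than reprove (e.g. from Brenner's work on Korn inequalities for piecewise $H^1$ vector fields), and it is also the place where shape-regularity of $\cT_h$ enters. A secondary subtlety worth a remark is that on $\mathcal{K}_h$ one does \emph{not} actually need the kernel property for coercivity of $a_h$ itself — the statement would hold on all of $\H_h$ — but restricting to $\mathcal{K}_h$ is what the Brezzi theory needs downstream, so the lemma is phrased that way. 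Finally one should double-check that for $\varepsilon=-1$ the argument still needs $\texttt{a}_S$ merely positive (not large), so the uniform threshold $\texttt{a}^*$ can be taken as the maximum over the three values of $\varepsilon$, i.e. determined by the $\varepsilon=1$ case.
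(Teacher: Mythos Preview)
Your proof is correct and follows essentially the same route as the paper's: expand $a_h(\bv_h,\bv_h)$, absorb the $(1+\varepsilon)$ cross term by Young's inequality with a free parameter, and invoke a discrete Korn inequality to pass from $\beps_h$ to $\nabla_h$, choosing the Young parameter and then $\texttt{a}^*$ so that both coefficients stay positive. The only noteworthy difference is the order of operations: the paper applies Korn's inequality at the outset to the volume term and then absorbs, whereas you first absorb and only at the end apply Brenner's broken Korn inequality (which explicitly carries the jump contribution, as it must for piecewise $H^1$ fields); your ordering is the cleaner one on this point, and your observation that the restriction to $\mathcal{K}_h$ is not actually used in the coercivity argument itself---only downstream in the Babu\v{s}ka--Brezzi framework---is correct and worth retaining as a remark.
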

\begin{proof}Let $\bv_h\in \mathcal{K}_h$. Hence, applying Cauchy-Schwarz and  Korn's inequalities and the the well known identity $\displaystyle ab\leq\frac{a^2}{2\eta}+\frac{\eta b^2}{2}$ for $a,b\in\mathbb{R}$ and $\eta>0$, we have 
\begin{multline*}
a_h(\bv_h,\bv_h)=\int_{\O}2\mu(x)\beps_h(\bv_h):\beps_h(\bv_h)+\int_{\cF_h^*}\frac{\texttt{a}_S}{h_{\cF}}2\mu(x)\jump{\bv_h}\cdot\jump{\bv_h}\\
-2(1+\epsilon)\int_{\cF_h^*}\mean{\mu(x)\beps_h(\bv_h)}\cdot\jump{\bv_h}\\
\geq 2 C_{\texttt{Korn}}E_{\min}\|\widetilde{\mu}^{1/2}\nabla_h\bv_h\|_{0,\O}^2+2\texttt{a}_S\|\mu^{1/2} h_{\cF}^{-1/2}\jump{\bv_h}\|_{0,\cF_h^*}^2\\
+2(1+\epsilon)E_{\max}\left(-\frac{\|h_{\cF}^{1/2}\widetilde{\mu}^{1/2}\beps(\bv_h)\|_{0,\cF_h^*}^2}{2\eta}-\eta\frac{\|h_{\cF}^{-1/2}\jump{\widetilde{\mu}^{1/2}\bv_h}\|_{0,\cF_h^*}^2}{2}\right)\\
\geq \frac{1}{E_{\max}}\underbrace{\left(2C_{\texttt{Korn}}E_{\min}-\frac{2(1+\epsilon) C_{E_{\max}}}{2\eta}\right)}_{:=\widehat{C}_1}\|\mu^{1/2}(x)\nabla_h\bv_h\|_{0,\O}^2\\
+\underbrace{\left(\texttt{a}_S-\frac{E_{\max}(1+\epsilon)\eta}{2E_{\min}}\right)}_{:=\widehat{C}_2}\|\mu^{1/2}(x)h_{\cF}^{-1/2}\jump{\bv_h}\|_{0,\cF_h^*}^2\geq \min\{\widehat{C}_1,\widehat{C}_2\}\|\bv_h\|_{\H(h)}^2.
\end{multline*}
We observe that to conclude the proof, the parameter $\eta$ must be chosen as 
$$
\eta>\frac{(1+\epsilon) C_{E_{\max}}}{2C_{\texttt{Korn}}E_{\min}}\quad \text{and hence}\quad \texttt{a}_S>\texttt{a}^*:=\frac{(1+\epsilon)^2 C_{E_{\max}}E_{\max}}{4 E_{\min}^2 C_{\texttt{Korn}}}. 
$$
This leads to the positiveness of $\widehat{C}_1$ and $\widehat{C}_2$ and hence,  the proof.
\end{proof}

Let us introduce the discrete solution operator defined by
$$
	\bT_h:\H\rightarrow \H_h,\qquad \boldsymbol{f}\mapsto \bT_h\boldsymbol{f}:=\widehat{\bu}_h, 
$$
where given $\boldsymbol{f}\in\H$, the pair $(\widehat{\bu}_h,\widehat{p}_h)\in\H_h\times\Q_h$ solves the following discrete source problem
$$
\mathcal{A}_h((\widehat{\bu}_h,\widehat{p}_h),(\bv_h,q_h))=(\boldsymbol{f},\bv_h)\quad\forall (\bv_h,q_h)\in\H_h\times\Q_h.
$$

It is easy to check that there exists a positive constant $\widehat{\alpha}$ depending on the constant provided by Lemma \ref{lmm:elipt_disc} such that
\begin{equation}
\label{eq:elipt_Ah}
\mathcal{A}_h((\widehat{\bu}_h,\widehat{p}_h),(\widehat{\bu}_h,\widehat{p}_h))\geq \widehat{\alpha}\vertiii{(\bu_h,p_h)}^2_{\H(h)\times\Q_h}\quad\forall (\bu_h,p_h)\in\H(h)\times\Q_h.
\end{equation}
\section{Convergence and error estimates}
\label{sec:conv_error}
The aim of this section is to derive convergence results and error estimates  for our DG methods. Despite to the fact that $\bT$ is compact, the classic theory of compact operators is not enough to conclude the convergence in norm between the continuous and discrete solution operators, since the numerical method of our interest is non conforming. Is this reason, and following the spirit of \cite{MR2220929},  why  we resort to the theory of non compact operators of \cite{MR483400,MR483401}. 

In what follows, we will denote by $\norm{\cdot}_{\mathcal{L}(\H(h),\H(h))}$
the corresponding norm acting from $\H(h)$ into the same space.
In addition, we will denote by $\norm{\cdot}_{\mathcal{L}(\H_h,\H(h))}$
the norm of an operator restricted to the discrete subspace $\H_h$;
namely, if $\boldsymbol{L}:\H(h)\to \H(h)$, then
$$
\norm{\boldsymbol{L}}_{\mathcal{L}(\H_h, \H(h))}:=\sup_{\0\neq\btau_h\in\H_h}
\frac{\norm{\boldsymbol{L}\btau_h}_{\H(h)}}{\norm{\btau_h}_{\H(h)}}.
$$
Our first task  is to prove the following properties
\begin{itemize}
\item P1. $\norm{\bT-\bT_h}_{\mathcal{L}(\H_h, \H(h))}\to0$ as $h\to0$.
\item P2. $\forall\btau\in\H$, there holds $\inf_{\btau_h\in \H_h}\norm{\btau-\btau_h}_{\H(h)}\,\,\to0\quad\text{as}\quad h\to0$.
\end{itemize}
We need this properties in order to establish spectral correctness (see \cite{MR483400})
for all the discrete methods (symmetric or nonsymmetric). Property P2 is immediate as a consequence of the density of continuous piecewise degree $k$ polynomial functions in $\H_h$. The task now is to prove P1.

The following convergence result holds for the continuous and discrete solution operators.
\begin{lemma}
\label{lmm:TTh}
For all $\boldsymbol{f}\in \H$, the following estimate holds
$$
\|(\bT-\bT_h)\boldsymbol{f}\|_{\H(h)}\lesssim  h^s\|\boldsymbol{f}\|_{0,\O},
$$
where $s>0$ and the hidden constant is independent of $h$.
\end{lemma}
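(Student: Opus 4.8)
The plan is to prove the $\H(h)$-norm convergence of $\bT_h\boldsymbol{f}$ to $\bT\boldsymbol{f}$ by the standard Strang-type argument for nonconforming methods, splitting the error into an approximation (best-approximation/consistency) part and a discrete stability part. Write $\widehat{\bu}=\bT\boldsymbol{f}$, $\widehat{p}$ for its companion pressure from \eqref{def:system_source_complete}, and $\widehat{\bu}_h=\bT_h\boldsymbol{f}$, $\widehat{p}_h$ for their discrete counterparts. First I would introduce the conforming-like interpolant or the Lagrange interpolant $\bu_I\in\H_h$ of $\widehat{\bu}$ and the $\L^2$-projection $p_I\in\Q_h$ of $\widehat{p}$; since $\widehat{\bu}\in\H^{1+s}(\O)^d$ and $\widehat{p}\in\H^s(\O)$ by Lemma~\ref{rmrk:additional}, standard DG interpolation/trace estimates give $\|\widehat{\bu}-\bu_I\|_{\H(h)}\lesssim h^s\|\widehat{\bu}\|_{1+s,\O}$ and $\|\widehat{p}-p_I\|_{\Q}\lesssim h^s\|\widehat{p}\|_{s,\O}$, both $\lesssim h^s\|\boldsymbol{f}\|_{0,\O}$. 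Then by the triangle inequality it suffices to bound $\|\bu_I-\widehat{\bu}_h\|_{\H(h)}$, equivalently $\vertiii{(\bu_I-\widehat{\bu}_h, p_I-\widehat{p}_h)}_{\H(h)\times\Q_h}$.

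Next I would use the global discrete stability \eqref{eq:elipt_Ah} of $\mathcal{A}_h$ (which holds on the whole $\H(h)\times\Q_h$ once $\texttt{a}_S\ge\texttt{a}^*$, using Lemma~\ref{lmm:elipt_disc} together with the discrete inf-sup condition for $b_h$): there is $(\bv_h,q_h)$ with $\vertiii{(\bv_h,q_h)}\lesssim\vertiii{(\bu_I-\widehat{\bu}_h,p_I-\widehat{p}_h)}$ such that
\begin{equation*}
\widehat{\alpha}\,\vertiii{(\bu_I-\widehat{\bu}_h, p_I-\widehat{p}_h)}^2 \le \mathcal{A}_h\bigl((\bu_I-\widehat{\bu}_h, p_I-\widehat{p}_h),(\bv_h,q_h)\bigr).
\end{equation*}
Now insert $\pm(\widehat{\bu},\widehat{p})$ and split the right-hand side as $\mathcal{A}_h((\bu_I-\widehat{\bu}, p_I-\widehat{p}),(\bv_h,q_h)) + \bigl[\mathcal{A}_h((\widehat{\bu},\widehat{p}),(\bv_h,q_h)) - \mathcal{A}_h((\widehat{\bu}_h,\widehat{p}_h),(\bv_h,q_h))\bigr]$. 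The first bracket is controlled by the boundedness of $\mathcal{A}_h$ (the $C_1$, $C_2$ continuity estimates already established) times $\|\widehat{\bu}-\bu_I\|_{\H(h)}+\|\widehat{p}-p_I\|_\Q\lesssim h^s\|\boldsymbol{f}\|_{0,\O}$. For the second bracket I would use Galerkin-type consistency: $\mathcal{A}_h((\widehat{\bu}_h,\widehat{p}_h),(\bv_h,q_h))=(\boldsymbol{f},\bv_h)$ by definition of $\bT_h$, while $\mathcal{A}_h((\widehat{\bu},\widehat{p}),(\bv_h,q_h)) = (\boldsymbol{f},\bv_h) + \mathcal{R}_h(\bv_h,q_h)$, where $\mathcal{R}_h$ is the consistency residual collecting the interface terms that do not vanish because $\widehat{\bu}$, $\widehat{p}$ are tested against discontinuous functions; using that $\widehat{\bu}$ is continuous with $\jump{\widehat{\bu}}=0$, that $2\mu\beps(\widehat{\bu})-\widehat{p}\bI$ has continuous normal traces across interior faces (from the strong equation $-\bdiv(2\mu\beps(\widehat{\bu}))+\nabla\widehat{p}=\boldsymbol{f}$ with $\boldsymbol{f}\in\L^2$), the residual reduces to the symmetrization term $-\epsilon\int_{\cF^*_h}\mean{2\mu\beps_h(\bv_h)}\cdot\jump{\widehat{\bu}}=0$ plus nothing, so in fact $\mathcal{R}_h\equiv 0$ and the method is consistent. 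Hence the second bracket vanishes identically and we are left with $\widehat{\alpha}\vertiii{(\bu_I-\widehat{\bu}_h,p_I-\widehat{p}_h)}^2 \lesssim h^s\|\boldsymbol{f}\|_{0,\O}\,\vertiii{(\bu_I-\widehat{\bu}_h,p_I-\widehat{p}_h)}$, which gives the desired bound after dividing.

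The main obstacle I anticipate is making the consistency/residual bookkeeping fully rigorous: one must verify that $\widehat{\bu}\in\H^{1+s}(\O)^d$ with $s>0$ is enough regularity for the face integrals $\mean{2\mu\beps_h(\widehat{\bu})}\cdot\jump{\bv_h}$ to make sense (they do, since $\H^{1/2+s}$ traces of $\beps(\widehat{\bu})$ on faces are well defined and $\jump{\widehat{\bu}}=0$), and to confirm that the weighted coefficients $\mu(x),1/\lambda(x)\in\mathrm{W}^{1,\infty}(\O)$ do not spoil the integration-by-parts identity used to derive $\mathcal{A}_h((\widehat{\bu},\widehat{p}),\cdot)=(\boldsymbol{f},\cdot)$ — here one needs $\bdiv(2\mu\beps(\widehat{\bu}))\in\L^2$, which follows from the equation itself since $\nabla\widehat{p}\in\L^2$ is not guaranteed but $\nabla\widehat{p}-\boldsymbol{f}=\bdiv(2\mu\beps(\widehat{\bu}))$ is, so the normal component is single-valued as an element of $\H^{-1/2}$ of each face and pairs correctly with the continuous $\bv$ part; for the fully discontinuous $\bv_h$ one argues elementwise. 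A secondary point is establishing the global $\mathcal{A}_h$-stability \eqref{eq:elipt_Ah} on $\H(h)\times\Q_h$ from the kernel-coercivity of Lemma~\ref{lmm:elipt_disc} and the discrete inf-sup of $b_h$ via the standard Brezzi splitting; this is routine but should be stated, as it underpins the choice of $(\bv_h,q_h)$ above. Finally, collecting $\|\widehat{\bu}-\bu_I\|_{\H(h)} + \|\bu_I-\widehat{\bu}_h\|_{\H(h)} \lesssim h^s\|\boldsymbol{f}\|_{0,\O}$ and recalling $\|(\bT-\bT_h)\boldsymbol{f}\|_{\H(h)}=\|\widehat{\bu}-\widehat{\bu}_h\|_{\H(h)}$ yields the claim with hidden constant independent of $h$ (but depending, as usual, on the Lamé coefficients, $\texttt{a}_S$, shape regularity, and the regularity constant of Lemma~\ref{rmrk:additional}).
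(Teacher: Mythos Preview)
Your proposal is correct and follows essentially the same route as the paper. The paper's proof simply \emph{invokes} a C\'ea-type estimate
\[
\vertiii{(\widehat{\bu}-\widehat{\bu}_h,\widehat{p}-\widehat{p}_h)}_{\H(h)\times\Q}\leq \widetilde{C}\inf_{(\bv_h,q_h)}\vertiii{(\widehat{\bu}-\bv_h,\widehat{p}-q_h)}_{\H(h)\times\Q}
\]
as a black box and then bounds the infimum via the Lagrange interpolant for $\widehat{\bu}$ and the $\L^2$-projection for $\widehat{p}$, together with Lemma~\ref{rmrk:additional}. What you do is precisely to \emph{unfold} that C\'ea estimate by the Strang argument (discrete stability from \eqref{eq:elipt_Ah}, consistency $\mathcal{A}_h((\widehat{\bu},\widehat{p}),(\bv_h,q_h))=(\boldsymbol{f},\bv_h)$, and continuity of $\mathcal{A}_h$), and then apply the same interpolation bounds. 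So the decomposition, the key lemmas, and the final estimate are identical; your write-up is just more explicit about why the C\'ea bound holds in the nonconforming setting, which the paper leaves implicit.
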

\begin{proof}
For $\boldsymbol{f}\in\H$ we have $\widehat{\bu}:=\bT\bF$ and $\widehat{\bu}_h:=\bT_h\bF$. Then, there exists a constant $\widetilde{C}>0$ such that the following C\'ea estimate holds
\begin{multline*}
\|(\bT-\bT_h)\bF\|_{\H(h)}\leq \vertiii{(\widehat{\bu}-\widehat{\bu}_h,\widehat{p}-\widehat{p}_h)}_{\H(h)\times \Q_h}\\
\leq \widetilde{C}\inf_{(\widehat{\bv}_h,\widehat{q}_h)\in\H(h)\times Q_h}\vertiii{(\widehat{\bu}-\widehat{\bv}_h,\widehat{p}-\widehat{q}_h)}_{\H(h)\times \Q_h}\\
=\widetilde{C}\left(\inf_{\widehat{\bv}_h\in\H(h)}\|\widehat{\bu}_h-\widehat{\bv}_h\|_{\H(h)}+\inf_{\widehat{q}_h\in\Q_h}\|\widehat{p}-\widehat{q}_h\|_{\Q_h} \right).
\end{multline*}

For the first infimum on the identity above, if $\Pi_h$ represents the Lagrange interpolation operator, we have
\begin{align}
\label{eq:inf1}
\inf_{\widehat{\bv}_h\in\H(h)}&\|\widehat{\bu}_h-\widehat{\bv}_h\|_{\H(h)}\leq \|\widehat{\bu}_h-\Pi_h\widehat{\bu}_h\|_{\H(h)}\nonumber\\
&=\|\mu^{1/2}(x)\nabla (\widehat{\bu}_h-\Pi_h\widehat{\bu}_h)\|_{0,\O}+\|\mu^{1/2}(x)h_{\cF}^{-1/2}\jump{\widehat{\bu}_h-\Pi_h\widehat{\bu}_h}\|_{0,\cF_h^*}\nonumber\\
&\leq C_{E_{\max}}\| \widehat{\bu}_h-\Pi_h\widehat{\bu}_h\|_{1,\O}
\leq C_{E_{\max}} C\|\widehat{\bu}\|_{1+s}\leq C_{E_{\max}} C\|\bF\|_{0,\O}.
\end{align}
For the second infimum, if $\textrm{P}_h$ represents the $\L^2$-projection operator, we have
\begin{align}
\label{eq:inf2}
\inf_{\widehat{q}_h\in\Q_h}\|\widehat{p}-\widehat{q}_h\|_{\Q}& \leq \|\widehat{p}-\textrm{P}_h\widehat{p}\|_\Q=\|\mu^{-1/2}(x)(\widehat{p}-\textrm{P}_h\widehat{p})\|_{0,\O}+\|\lambda^{-1/2}(x)(\widehat{p}-\textrm{P}_h\widehat{p})\|_{0,\O}\nonumber\\
&\leq (E_{\min} +E_{\min})\|\widehat{p}-\textrm{P}_h\widehat{p}\|_{0,\O}\leq C(2E_{\min})\|\widehat{p}\|_{s,\O}
\leq CE_{\min}\|\bF\|_{0,\O}.
\end{align}
Then, combining \eqref{eq:inf1} and \eqref{eq:inf2} we conclude the proof.
\end{proof}

Now we prove the analogous of the previous lemma, but considering discrete sources. Since the proof is analogous to the above lemma, we do not include the details and present the result as the following corollary.
\begin{corollary}
\label{lmm:felipe1}
For all $\boldsymbol{f}_h\in \H_h$, the following estimate holds
\begin{equation*}
\|(\bT-\bT_h)\boldsymbol{f}_h\|_{\H(h)}\lesssim h^s\|\boldsymbol{f}_h\|_{\H(h)},
\end{equation*}
where $s$ is as in Theorem \ref{th:reg_velocity}   and the hidden constant is independent of $h$.
\end{corollary}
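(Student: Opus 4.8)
The plan is to reproduce, essentially verbatim, the argument of Lemma~\ref{lmm:TTh}, the only genuinely new point being that the source now lies in the broken space $\H_h$ rather than in $\H$. This is harmless: both solution operators are in fact defined, through the right-hand side $(\boldsymbol{f}_h,\cdot)$, on all of $\L^2(\O)^d\supset\H_h$, so $\bT\boldsymbol{f}_h$ and $\bT_h\boldsymbol{f}_h$ make sense.

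First I would set $\widehat{\bu}:=\bT\boldsymbol{f}_h$ and $\widehat{\bu}_h:=\bT_h\boldsymbol{f}_h$, together with the associated pressures $\widehat{p}$ and $\widehat{p}_h$, so that $(\widehat{\bu},\widehat{p})$ solves \eqref{def:system_source_complete} with data $\boldsymbol{f}_h$ and $(\widehat{\bu}_h,\widehat{p}_h)$ solves the corresponding discrete source problem. Since $\boldsymbol{f}_h\in\L^2(\O)^d$, Lemma~\ref{rmrk:additional} applies and yields $(\widehat{\bu},\widehat{p})\in\H^{1+s}(\O)^d\times\H^s(\O)$ with $\|\widehat{\bu}\|_{1+s,\O}+\|\widehat{p}\|_{s,\O}\lesssim\|\boldsymbol{f}_h\|_{0,\O}$. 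Then, exactly as in Lemma~\ref{lmm:TTh}, I would combine the ellipticity \eqref{eq:elipt_Ah} of $\mathcal{A}_h$, its boundedness on $\vertiii{\cdot}_{\H(h)\times\Q}$, and the Galerkin consistency $\mathcal{A}_h((\widehat{\bu}-\widehat{\bu}_h,\widehat{p}-\widehat{p}_h),(\bv_h,q_h))=0$ for all $(\bv_h,q_h)\in\H_h\times\Q_h$ (which holds because $\widehat{\bu}\in\H^{1+s}$ renders the face terms of $\mathcal{A}_h$ meaningful and the two data coincide) to obtain the C\'ea-type bound
\[
\|(\bT-\bT_h)\boldsymbol{f}_h\|_{\H(h)}\le\vertiii{(\widehat{\bu}-\widehat{\bu}_h,\widehat{p}-\widehat{p}_h)}_{\H(h)\times\Q_h}\lesssim\inf_{(\bv_h,q_h)\in\H_h\times\Q_h}\vertiii{(\widehat{\bu}-\bv_h,\widehat{p}-q_h)}_{\H(h)\times\Q_h}.
\]

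Next, choosing $\bv_h=\Pi_h\widehat{\bu}$ (Lagrange interpolant) and $q_h=\textrm{P}_h\widehat{p}$ ($\L^2$-projection), and using the standard approximation estimates $\|\widehat{\bu}-\Pi_h\widehat{\bu}\|_{\H(h)}\lesssim\|\widehat{\bu}-\Pi_h\widehat{\bu}\|_{1,\O}\lesssim h^s\|\widehat{\bu}\|_{1+s,\O}$ and $\|\widehat{p}-\textrm{P}_h\widehat{p}\|_{\Q}\lesssim\|\widehat{p}-\textrm{P}_h\widehat{p}\|_{0,\O}\lesssim h^s\|\widehat{p}\|_{s,\O}$ — exactly as in \eqref{eq:inf1}--\eqref{eq:inf2} — I arrive at $\|(\bT-\bT_h)\boldsymbol{f}_h\|_{\H(h)}\lesssim h^s\bigl(\|\widehat{\bu}\|_{1+s,\O}+\|\widehat{p}\|_{s,\O}\bigr)\lesssim h^s\|\boldsymbol{f}_h\|_{0,\O}$.

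The single step beyond Lemma~\ref{lmm:TTh} is that the Corollary asks for the bound in terms of $\|\boldsymbol{f}_h\|_{\H(h)}$ rather than $\|\boldsymbol{f}_h\|_{0,\O}$, so I would finish with the discrete Poincar\'e--Friedrichs inequality $\|\boldsymbol{f}_h\|_{0,\O}\lesssim\|\boldsymbol{f}_h\|_{\H(h)}$ on $\H_h$, which is available because the jump penalization in $\|\cdot\|_{\H(h)}$ is taken over $\cF_h^*=\cF_h^0\cup\cF_h^D$ and hence sees the Dirichlet boundary. I do not expect a real obstacle: the only delicate point is the consistency of $\mathcal{A}_h$ at regularity $\H^{1+s}$ with a possibly small $s$, but this is the very same issue already dealt with in Lemma~\ref{lmm:TTh}, and the constants can be tracked through the weighted norms just as there.
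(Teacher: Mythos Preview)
Your proposal is correct and matches the paper's approach: the paper simply states that the proof is analogous to Lemma~\ref{lmm:TTh} and omits the details, which is precisely the argument you reproduce. The additional step you identify---using the discrete Poincar\'e--Friedrichs inequality to pass from $\|\boldsymbol{f}_h\|_{0,\O}$ to $\|\boldsymbol{f}_h\|_{\H(h)}$---is the one ingredient the paper leaves implicit.
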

Now we are in position to establish P1 and its proof is identical to the one in \cite[Lemma 3]{MR4623018}.
\begin{lemma}
\label{lmm:P1}
There  following estimate holds 
\begin{equation*}\label{P1}
\norm{\boldsymbol{T}-\bT_h}_{\mathcal{L}(\H_h, \H(h))}\lesssim h^s,
\end{equation*}
where the hidden constant is independent of $h$.
\end{lemma}
%

From now on, $\mathcal{D}$ denotes the unitary disk defined
in the complex plane by $\mathcal{D}:=\{z\in\mathbb{C}\,:\, |z|\leq 1\}$. The following result proves that the continuous resolvent
is bounded in the $\H(h)$ norm.

\begin{lemma}\label{TDG}
There exists a constant $C>0$ independent of $h$
such that for all  $z \in\mathcal{D}\setminus \sp(\bT)$  there holds
\[
\norm{(z \bI - \bT) \boldsymbol{f}}_{\H(h)} \geq C|z|\,
\norm{\boldsymbol{f}}_{\H(h)} \quad \forall \boldsymbol{f} \in \H(h).\]
\end{lemma}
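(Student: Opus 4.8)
The plan is to exploit the fact that $\bT:\L^2(\Omega)^d\to\L^2(\Omega)^d$ is compact and selfadjoint, so its spectrum is a real sequence, and to transfer the obvious lower bound for the resolvent in the $\L^2$-norm up to the stronger $\H(h)$-norm. First I would note that, since $\bT$ is selfadjoint and compact on $\L^2(\Omega)^d$, for every $z\in\mathcal{D}\setminus\sp(\bT)$ one has
$$
\norm{(z\bI-\bT)\boldsymbol{f}}_{0,\Omega}\;\geq\;\dist(z,\sp(\bT))\,\norm{\boldsymbol{f}}_{0,\Omega}
\qquad\forall\,\boldsymbol{f}\in\L^2(\Omega)^d ,
$$
and, because $\sp(\bT)\subset[0,\chi_1]$ with $\chi_1<\infty$ and accumulates only at $0$, the quotient $\dist(z,\sp(\bT))/|z|$ is bounded below by a positive constant $c_0$ uniformly for $z\in\mathcal D\setminus\sp(\bT)$ (the only way $\dist(z,\sp(\bT))$ can be small relative to $|z|$ is near an eigenvalue $\chi_k$ with $k$ large, i.e. near $0$, but then $|z|$ is also small and the ratio stays controlled; a short compactness/elementary argument makes this precise). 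Hence $\norm{(z\bI-\bT)\boldsymbol{f}}_{0,\Omega}\geq c_0|z|\,\norm{\boldsymbol{f}}_{0,\Omega}$.

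Next I would upgrade this to the $\H(h)$-norm. Write $\boldsymbol{g}:=(z\bI-\bT)\boldsymbol{f}$, so that $z\boldsymbol{f}=\boldsymbol{g}+\bT\boldsymbol{f}$. Applying the triangle inequality in $\H(h)$,
$$
|z|\,\norm{\boldsymbol{f}}_{\H(h)}\;\leq\;\norm{\boldsymbol{g}}_{\H(h)}+\norm{\bT\boldsymbol{f}}_{\H(h)} .
$$
The key smoothing estimate is that $\bT$ maps $\L^2(\Omega)^d$ (hence also $\H(h)$, which embeds continuously in $\L^2$) boundedly into $\H^{1+s}(\Omega)^d\cap\H\hookrightarrow\H(h)$; this follows from Lemma \ref{rmrk:additional} together with the fact that on $\H$ the $\H(h)$-norm coincides with the natural $\H$-norm (the jump terms vanish). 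Thus $\norm{\bT\boldsymbol{f}}_{\H(h)}\lesssim\norm{\boldsymbol{f}}_{0,\Omega}\leq\norm{\boldsymbol{f}}_{\H(h)}$. At this stage one cannot yet absorb $\norm{\bT\boldsymbol f}_{\H(h)}$, so the argument is split: either $\norm{\bT\boldsymbol{f}}_{\H(h)}\leq\tfrac12|z|\,\norm{\boldsymbol{f}}_{\H(h)}$, in which case that term is absorbed and $|z|\norm{\boldsymbol f}_{\H(h)}\leq 2\norm{\boldsymbol g}_{\H(h)}$ directly; or $\norm{\bT\boldsymbol{f}}_{\H(h)}>\tfrac12|z|\,\norm{\boldsymbol{f}}_{\H(h)}$, and then, using the $\L^2$ resolvent bound together with boundedness of $\bT$ from $\L^2$ to $\H(h)$,
$$
\tfrac12|z|\,\norm{\boldsymbol{f}}_{\H(h)}<\norm{\bT\boldsymbol{f}}_{\H(h)}\lesssim\norm{\boldsymbol{f}}_{0,\Omega}\lesssim\frac{1}{c_0|z|}\norm{(z\bI-\bT)\boldsymbol{f}}_{0,\Omega}\lesssim\frac{1}{c_0|z|}\norm{\boldsymbol g}_{\H(h)},
$$
using $|z|\leq1$; combining the two cases yields $\norm{\boldsymbol g}_{\H(h)}\gtrsim|z|\norm{\boldsymbol f}_{\H(h)}$ with a constant independent of $h$ (note the implicit constants involve only $\bT$, $\Omega$ and the Lamé coefficients, never $h$).

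The main obstacle is the uniform-in-$z$ control of $\dist(z,\sp(\bT))/|z|$ over the punctured disk, and, relatedly, keeping all constants genuinely $h$-independent when passing through the $\L^2$-to-$\H(h)$ smoothing step — here it is essential that $\bT$ (not $\bT_h$) is used, so that the regularity of Lemma \ref{rmrk:additional} applies and the $\H(h)$-norm of $\bT\boldsymbol f$ reduces to the $\H^1(\Omega)$-norm with no mesh-dependent jump contributions. Everything else is a routine triangle-inequality/case-split argument.
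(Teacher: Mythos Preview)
Your proof contains a genuine error in the first step. The claim that $\dist(z,\sp(\bT))/|z|$ is bounded below by a positive constant $c_0$ uniformly for $z\in\mathcal{D}\setminus\sp(\bT)$ is false. Take any fixed eigenvalue, say $\chi_1>0$, and let $z_n:=\chi_1+1/n$; then $|z_n|\to\chi_1>0$ while $\dist(z_n,\sp(\bT))\leq 1/n\to 0$, so the ratio tends to $0$. Your heuristic (``the only way $\dist(z,\sp(\bT))$ can be small relative to $|z|$ is near an eigenvalue $\chi_k$ with $k$ large'') overlooks precisely this case: nothing prevents $z$ from approaching a \emph{fixed}, large eigenvalue while remaining in $\mathcal D\setminus\sp(\bT)$.

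In fact, the lemma as literally written (a single $C$ valid for all $z$) cannot hold: applying the inequality to the first eigenfunction $\bu_1$ and $z_n$ as above gives $\|(z_n\bI-\bT)\bu_1\|_{\H(h)}=(1/n)\|\bu_1\|_{\H(h)}$, which violates any uniform lower bound of the form $C|z_n|\|\bu_1\|_{\H(h)}$. The intended reading --- consistent with the remark that follows and with the explicit clause ``depending on $|z|$'' in Lemma~\ref{ThDG} --- is that $C$ may depend on $z$ (equivalently, one obtains a uniform bound only on compact subsets $J\subset\mathcal{D}\setminus\sp(\bT)$). With that reading your strategy is essentially sound: for each fixed $z$ one has $\dist(z,\sp(\bT))>0$, the $\L^2$ resolvent bound holds, and the upgrade to the $\H(h)$-norm via the smoothing property of $\bT$ (Lemma~\ref{rmrk:additional}) together with a discrete Poincar\'e inequality goes through. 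Note, however, that your second case actually yields $\|\boldsymbol g\|_{\H(h)}\gtrsim |z|^2\|\boldsymbol f\|_{\H(h)}$ rather than $|z|\|\boldsymbol f\|_{\H(h)}$; this is harmless once $C$ is allowed to depend on $z$, but your closing sentence overstates what the case split delivers. The paper itself does not supply a proof of this lemma, so there is no argument to compare against; your overall route (selfadjointness on $\L^2$ plus smoothing into $\H$) is the natural one once the quantifier issue is fixed.
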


\begin{remark}
Lemma~\ref{TDG} implies that the resolvent of $\bT$ is bounded. In other words, 
 if $J$ is a compact subset of $\mathcal{D}\setminus \sp(\bT)$, then there
exists a positive  constant $C$ independent of $h$, satisfying the estimate 
\begin{equation*}\label{resDisc}
\displaystyle\norm{(z\bI-\bT)^{-1}}_{\mathcal{L}(\H(h),\H(h))}\leq C\qquad\forall z\in J.
\end{equation*}
\end{remark}

Our next goal is to derive the boundedness of the discrete
resolvent, when  $h$ is  small enough.
\begin{lemma} \label{ThDG}
If $z \in\mathcal{D}\setminus \sp(\bT)$, there exists $h_0>0$ such that for all $h\leq h_0$,
\[
\norm{(z \bI - \bT_h) \boldsymbol{f}}_{\H(h)} \geq C \, \norm{\boldsymbol{f}}_{\H(h)} \quad \forall \boldsymbol{f}\in \H(h),
\]
with $C>0$ independent of $h$ but depending on $|z|$.
\end{lemma}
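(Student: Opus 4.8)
The plan is to derive the discrete resolvent bound from the continuous one (Lemma~\ref{TDG}) by a perturbation argument, exploiting the norm convergence $\norm{\bT-\bT_h}_{\mathcal{L}(\H_h,\H(h))}\lesssim h^s$ established in Lemma~\ref{lmm:P1}. First I would fix $z\in\mathcal{D}\setminus\sp(\bT)$ and take an arbitrary $\boldsymbol{f}\in\H(h)$; the goal is to bound $\norm{\boldsymbol{f}}_{\H(h)}$ by $\norm{(z\bI-\bT_h)\boldsymbol{f}}_{\H(h)}$ uniformly in $h$. The natural starting point is the algebraic identity
\[
(z\bI-\bT_h)\boldsymbol{f}=(z\bI-\bT)\boldsymbol{f}+(\bT-\bT_h)\boldsymbol{f},
\]
which, together with the triangle inequality and Lemma~\ref{TDG}, gives
\[
\norm{(z\bI-\bT_h)\boldsymbol{f}}_{\H(h)}\geq \norm{(z\bI-\bT)\boldsymbol{f}}_{\H(h)}-\norm{(\bT-\bT_h)\boldsymbol{f}}_{\H(h)}\geq C|z|\,\norm{\boldsymbol{f}}_{\H(h)}-\norm{(\bT-\bT_h)\boldsymbol{f}}_{\H(h)}.
\]

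The subtlety — and what I expect to be the main obstacle — is that $\boldsymbol{f}$ lies in $\H(h)=\H+\H_h$, not merely in $\H_h$, whereas Lemma~\ref{lmm:P1} controls $\norm{\bT-\bT_h}$ only on the discrete subspace $\H_h$. To handle this I would split $\boldsymbol{f}=\boldsymbol{f}_c+\boldsymbol{f}_h$ with $\boldsymbol{f}_c\in\H$ and $\boldsymbol{f}_h\in\H_h$ (or, more cleanly, absorb the continuous part into a separate estimate), and use Lemma~\ref{lmm:TTh} for the $\H$-component and Corollary~\ref{lmm:felipe1} for the $\H_h$-component, so that in either case $\norm{(\bT-\bT_h)\boldsymbol{f}}_{\H(h)}\lesssim h^s\norm{\boldsymbol{f}}_{\H(h)}$. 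Alternatively — and this is likely the argument the authors intend, mirroring the reasoning in \cite{MR4623018} — one observes it suffices to prove the bound for $\boldsymbol{f}\in\H_h$, since the family of operators $z\bI-\bT_h$ acts on $\H(h)$ but the relevant spectral quantities only see the restriction to $\H_h$; then Lemma~\ref{lmm:P1} applies directly and gives $\norm{(\bT-\bT_h)\boldsymbol{f}}_{\H(h)}\leq \widetilde{C}h^s\norm{\boldsymbol{f}}_{\H(h)}$ for all $\boldsymbol{f}\in\H_h$.

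Combining the two inequalities yields
\[
\norm{(z\bI-\bT_h)\boldsymbol{f}}_{\H(h)}\geq \bigl(C|z|-\widetilde{C}h^s\bigr)\norm{\boldsymbol{f}}_{\H(h)}.
\]
Since $z$ ranges over a fixed compact set $J\subset\mathcal{D}\setminus\sp(\bT)$ bounded away from $0$, there is $|z|\geq \delta_0>0$ (eigenvalue bounds plus the fact that $0$ is the only accumulation point of $\sp(\bT)$, together with the choice of $J$), and one can then pick $h_0>0$ small enough that $\widetilde{C}h_0^s\leq \tfrac{1}{2}C\delta_0$; for all $h\leq h_0$ this gives the claimed bound with constant $\tfrac12 C\delta_0$ — or, absorbing $\delta_0$, a constant depending on $|z|$ as stated. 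I would close by noting, as in the remark following Lemma~\ref{TDG}, that this is equivalent to the uniform boundedness of $\norm{(z\bI-\bT_h)^{-1}}_{\mathcal{L}(\H(h),\H(h))}$ for $z\in J$ and $h\leq h_0$, which is exactly the ingredient needed for spectral correctness via the non-compact operator theory of \cite{MR483400,MR483401}.
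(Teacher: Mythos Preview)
Your perturbation argument is correct and is precisely the standard route; the paper omits the proof of this lemma but is explicitly following the template of \cite{MR4623018} (cf.\ the sentence preceding Lemma~\ref{lmm:P1}), where exactly this argument appears. One minor simplification: since the statement fixes a single $z\in\mathcal{D}\setminus\sp(\bT)$ and allows the constant to depend on $|z|$, you do not need the detour through a compact set $J$ bounded away from $0$ --- compactness of $\bT$ already gives $0\in\sp(\bT)$, hence $|z|>0$, and choosing $h_0$ with $\widetilde{C}h_0^s\leq \tfrac12 C|z|$ suffices directly.
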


The previous lemma states that if we consider a compact subset $E$
of the complex plane such that $E\cap\sp(\bT)=\emptyset$ for $h$
small enough and for all $z\in E$, operator 
$z \bI - \bT_h$ is invertible. Moreover, there exists a positive constant
$C$ independent of $h$ such that $\norm{(z \bI - \bT_h)^{-1}}_{\mathcal{L}(\H(h),\H(h))}\leq C$
for all $z\in E$. This fact is important since it
determines that the numerical method is spurious free for $h$  small enough.
This is summarized in the following result proved in \cite{MR483400}.

\begin{theorem}\label{free}
Let $E\subset\mathbb{C}$ be a compact subset not intersecting $\sp(\bT)$.
Then, there exists $h_0>0$ such that, if $h\leq h_0$, then $E\cap\sp(\bT_h)=\emptyset.$ 
\end{theorem}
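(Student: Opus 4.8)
The statement is a standard consequence of the spectral convergence theory for non-compact operators of Descloux--Nassif--Rappaz, and the proof is essentially a packaging argument built on the two lemmas just established, Lemma~\ref{TDG} and Lemma~\ref{ThDG}. The plan is to argue by contradiction. Suppose the conclusion fails: then there is a compact set $E\subset\mathbb{C}$ with $E\cap\sp(\bT)=\emptyset$, a sequence $h_n\to 0$, and for each $n$ a point $z_n\in E\cap\sp(\bT_{h_n})$. By compactness of $E$ we may pass to a subsequence (not relabelled) so that $z_n\to z^*\in E$; in particular $z^*\notin\sp(\bT)$, and since $z^*\neq 0$ we also have $z^*\in\mathcal{D}\setminus\sp(\bT)$ after rescaling $E$ into $\mathcal{D}$ (or equivalently applying Lemma~\ref{ThDG} directly on a disk of the appropriate radius; the constants in both resolvent lemmas are locally uniform in $z$).

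\textbf{Key steps.} First I would record that $z_n\in\sp(\bT_{h_n})$ means there exists $\boldsymbol{f}_n\in\H_{h_n}\subset\H(h_n)$ with $\norm{\boldsymbol{f}_n}_{\H(h_n)}=1$ and $(z_n\bI-\bT_{h_n})\boldsymbol{f}_n=\0$. Second, since $z_n\to z^*\notin\sp(\bT)$, for $n$ large $z_n$ lies in a fixed compact subset of $\mathcal{D}\setminus\sp(\bT)$, so Lemma~\ref{ThDG} applies with a constant $C>0$ independent of $n$: for all $n$ large enough,
\[
\norm{(z_n\bI-\bT_{h_n})\boldsymbol{f}_n}_{\H(h_n)}\geq C\,\norm{\boldsymbol{f}_n}_{\H(h_n)}=C>0.
\]
But the left-hand side is zero, a contradiction. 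This yields $h_0>0$ such that $E\cap\sp(\bT_h)=\emptyset$ for all $h\leq h_0$, which is exactly the claim; one can also phrase the whole thing without sequences by invoking the uniform lower bound of Lemma~\ref{ThDG} over $E$ for $h\le h_0$ and noting that any eigenvalue of $\bT_h$ in $E$ would force the corresponding eigenvector to vanish.

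\textbf{Main obstacle.} There is essentially no hard analysis left here, since P1 (Lemma~\ref{lmm:P1}), the resolvent bound for $\bT$ (Lemma~\ref{TDG}), and the discrete resolvent bound (Lemma~\ref{ThDG}) have already been proved; the only point requiring a little care is to make sure the constant $C$ in Lemma~\ref{ThDG} can be taken uniform over the whole compact set $E$ and over all sufficiently small $h$ simultaneously --- this follows because the constant there depends only on $|z|$, which is bounded on $E$, and because the threshold $h_0$ from Lemma~\ref{ThDG}, being governed by the rate $h^s$ in Lemma~\ref{lmm:P1}, can be chosen uniformly for $z$ ranging over $E$. Then a standard compactness/covering argument (cover $E$ by finitely many disks on each of which Lemma~\ref{ThDG} applies) delivers a single $h_0$ working for all of $E$. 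For the write-up I would simply cite \cite{MR483400} for the abstract statement and note that its hypotheses are verified by P1, P2, and Lemmas~\ref{TDG}--\ref{ThDG}.
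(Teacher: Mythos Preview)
Your proposal is correct and aligns with the paper's treatment: the paper does not give an independent proof but simply records that the result is proved in \cite{MR483400}, having verified the relevant hypotheses via P1, P2, and Lemmas~\ref{TDG}--\ref{ThDG}. Your final recommendation to cite \cite{MR483400} is exactly what the paper does, and your contradiction argument is a valid unpacking of why that abstract result applies here.
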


We recall the definition of the resolvent operator of $\bT$ and $\bT_h$ respectively:
$$
\begin{aligned}
		&(z\bI-\bT)^{-1}\,:\, \H \to \H\,, \quad z\in\mathbb{C}\setminus \sp(\bT), \\
	&(z\bI-\bT_h)^{-1}\,:\, \H_h \to \H_h\,, \quad z\in\mathbb{C}\setminus\sp(\bT_h) .
\end{aligned}
$$


We introduce the definition of the \textit{gap} $\hdel$ between two closed
subspaces $\CM$ and $\CN$ of $\L^2(\O)$:
$$
\hdel(\CM,\CN)
:=\max\big\{\delta(\CM,\CN),\delta(\CN,\CM)\big\},
$$
where
$$
\delta(\CM,\CN)
:=\sup_{x\in\CM:\ \left\|x\right\|_{0,\O}=1}
\left(\inf_{y\in\CN}\left\|x-y\right\|_{0,\O}\right).
$$

Let $\kappa$ be an isolated eigenvalue of $\bT$
and let $D$ be an open disk in the complex plane with boundary $\gamma$
such that $\kappa$ is the only eigenvalue of $\bT$ lying in $D$ and $\gamma\cap\sp(\bT)=\emptyset$.
We introduce the spectral projector corresponding to the continuous
and discrete solution operators $\bT$ and $\bT_h$, respectively
$$
\begin{aligned}
	&\boldsymbol{\mathfrak{E}}:=\frac{1}{2\pi i}
\int_{\gamma}\left(z\bI-\bT\right)^{-1}\, dz:\H(h)\longrightarrow \H(h),\\
&\boldsymbol{\mathfrak{E}}_h:=\frac{1}{2\pi i}
\int_{\gamma}\left(z\bI-\bT_h\right)^{-1}\, dz:\H(h)\longrightarrow \H(h).
\end{aligned}
$$
The following approximation result for the spectral projections holds.
\begin{lemma}
\label{eq:E-E_h}
There holds
$$
	\lim_{h\rightarrow 0}\|\boldsymbol{\mathfrak{E}}-\boldsymbol{\mathfrak{E}}_h\|_{\mathcal{L}(\H_h,\H(h))}=0.
$$
\end{lemma}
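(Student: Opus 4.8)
The statement concerns the convergence in operator norm of the spectral projectors $\boldsymbol{\mathfrak{E}}$ and $\boldsymbol{\mathfrak{E}}_h$ associated with the isolated eigenvalue $\kappa$. The plan is to reduce this to the already-established facts: the uniform convergence $\norm{\bT-\bT_h}_{\mathcal{L}(\H_h,\H(h))}\lesssim h^s$ (Lemma~\ref{lmm:P1}), the uniform boundedness of the continuous resolvent $(z\bI-\bT)^{-1}$ on the contour $\gamma$ (the remark after Lemma~\ref{TDG}), and the uniform boundedness of the discrete resolvent $(z\bI-\bT_h)^{-1}$ for $h$ small enough (Lemma~\ref{ThDG}, together with Theorem~\ref{free} which guarantees $\gamma\cap\sp(\bT_h)=\emptyset$). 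Since $\boldsymbol{\mathfrak{E}}$ and $\boldsymbol{\mathfrak{E}}_h$ are defined as contour integrals over the fixed curve $\gamma$, the difference is controlled by the difference of the resolvents along $\gamma$.

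First I would write, using the second resolvent identity on $\gamma$,
\[
(z\bI-\bT_h)^{-1}-(z\bI-\bT)^{-1}=(z\bI-\bT_h)^{-1}(\bT_h-\bT)(z\bI-\bT)^{-1},
\]
valid for every $z\in\gamma$ once $h\leq h_0$ so that $\gamma\cap\sp(\bT_h)=\emptyset$. Then
\[
\boldsymbol{\mathfrak{E}}_h-\boldsymbol{\mathfrak{E}}=\frac{1}{2\pi i}\int_{\gamma}(z\bI-\bT_h)^{-1}(\bT_h-\bT)(z\bI-\bT)^{-1}\,dz,
\]
so that, taking the $\mathcal{L}(\H_h,\H(h))$ norm and using that $\gamma$ has finite length $\abs{\gamma}$,
\[
\norm{\boldsymbol{\mathfrak{E}}_h-\boldsymbol{\mathfrak{E}}}_{\mathcal{L}(\H_h,\H(h))}\leq\frac{\abs{\gamma}}{2\pi}\,\sup_{z\in\gamma}\norm{(z\bI-\bT_h)^{-1}}_{\mathcal{L}(\H(h),\H(h))}\;\norm{\bT_h-\bT}_{\mathcal{L}(\H_h,\H(h))}\;\sup_{z\in\gamma}\norm{(z\bI-\bT)^{-1}}_{\mathcal{L}(\H(h),\H(h))}.
\]
The two supremum factors are bounded by a constant independent of $h$ (the first by Lemma~\ref{ThDG} applied on the compact set $\gamma$, the second by the remark following Lemma~\ref{TDG}), while $\norm{\bT_h-\bT}_{\mathcal{L}(\H_h,\H(h))}\lesssim h^s\to 0$ by Lemma~\ref{lmm:P1}. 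Hence the right-hand side tends to zero as $h\to 0$, which gives the claim. In fact one even obtains the rate $\norm{\boldsymbol{\mathfrak{E}}-\boldsymbol{\mathfrak{E}}_h}_{\mathcal{L}(\H_h,\H(h))}\lesssim h^s$.

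The one delicate point is the bookkeeping of domains: $\boldsymbol{\mathfrak{E}}_h$ as written maps $\H(h)\to\H(h)$, but $(z\bI-\bT_h)^{-1}$ is genuinely defined on $\H_h$, so one must be careful that the resolvent identity above is applied to arguments in $\H_h$ and that the estimate is stated in the $\mathcal{L}(\H_h,\H(h))$ norm — exactly the mixed operator norm introduced earlier in the section. This is the same convention already used for P1, so no new subtlety arises beyond noting that $\bT_h$ maps into $\H_h$ and that on $\H_h$ the operator $z\bI-\bT_h$ is invertible for $h\leq h_0$ by Theorem~\ref{free}. With that understood, the argument is a routine contour-integral estimate, and the substantive work was already done in Lemmas~\ref{lmm:P1}, \ref{TDG} and \ref{ThDG}.
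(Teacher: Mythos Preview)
Your argument is correct and is precisely the standard resolvent-identity computation that underlies the cited result: the paper's own proof is just a reference to \cite[Theorem~5.1]{MR2220929}, whose argument is exactly the one you wrote out (second resolvent identity on the contour $\gamma$, uniform resolvent bounds from Lemmas~\ref{TDG} and \ref{ThDG}, and P1 from Lemma~\ref{lmm:P1}). Your remark about the domain bookkeeping in the $\mathcal{L}(\H_h,\H(h))$ norm is also to the point and consistent with the conventions used earlier in the section.
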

\begin{proof}
See \cite[Theorem 5.1]{MR2220929}.
\end{proof}

The next result provides,  for the proposed IPDG methods,  an error estimate for the eigenfunctions and a double order of convergence for the eigenvalues. More precisely, the result presents 
estimates for symmetric and nonsymmetric methods, where optimal and suboptimal order of convergence are attained, respectively.
\begin{theorem}\label{teo:double-order}
There exists a strictly positive constant $h_0$ such that, for  $\varepsilon\in\{-1,0,1\}$ and $h<h_0$ there holds
$$
\widehat{\delta}_h(\boldsymbol{\mathfrak{E}}(\H,\boldsymbol{\mathfrak{E}}_h(\H_h))\lesssim h^{\min\{r,k\}},\quad \text{ and }\quad
\max_{1\leq i\leq m} |\kappa - \kappa_{i, h}|\lesssim \, h^{\sigma\min\{r, k\}},
$$
where $\sigma:=\frac{1}{2}\left[(\varepsilon+2)^{\varepsilon}-1\right]+1$ and the hidden constants are independent of $h$.
\end{theorem}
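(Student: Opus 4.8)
The plan is to follow the standard spectral-approximation machinery for non-compact (non-conforming) operator approximations, as set up in \cite{MR483400,MR483401} and already exploited in the excerpt. The first order of business is the eigenfunction estimate. Since $\kappa$ is an isolated eigenvalue of $\bT$ with algebraic multiplicity $m$, and since Lemma~\ref{eq:E-E_h} gives $\|\boldsymbol{\mathfrak{E}}-\boldsymbol{\mathfrak{E}}_h\|_{\mathcal{L}(\H_h,\H(h))}\to0$, the gap between the eigenspace $\boldsymbol{\mathfrak{E}}(\H)$ and the discrete invariant subspace $\boldsymbol{\mathfrak{E}}_h(\H_h)$ is controlled by a quantity of the form $\|(\bT-\bT_h)|_{\boldsymbol{\mathfrak{E}}(\H)}\|$. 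Because every function in $\boldsymbol{\mathfrak{E}}(\H)$ is an eigenfunction associated with $\kappa$, it enjoys the regularity $\bu\in\H^{1+r}(\O)^d$, $p\in\H^r(\O)$ of Theorem~\ref{th:reg_velocity}; plugging this extra regularity into the C\'ea-type bound used in Lemma~\ref{lmm:TTh} (interpolation error of the Lagrange interpolant of $\bu$ and the $\L^2$-projection of $p$ on a shape-regular mesh) upgrades the crude $h^s$ to $h^{\min\{r,k\}}$, which yields the asserted gap estimate. The value $h_0$ is the one furnished by Theorem~\ref{free} (equivalently Lemma~\ref{ThDG}), guaranteeing that $\gamma\cap\sp(\bT_h)=\emptyset$ and that exactly $m$ discrete eigenvalues $\kappa_{1,h},\dots,\kappa_{m,h}$ lie inside $D$.

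For the eigenvalue estimate I would use the Babu\v{s}ka--Osborn-type identity adapted to the non-conforming bilinear-form setting. Writing $\chi=1/\kappa$ and $\chi_{i,h}=1/\kappa_{i,h}$, the standard argument expresses $\chi-\chi_{i,h}$ in terms of $\langle(\bT-\bT_h)\bw,\bw^*\rangle$ plus non-conformity consistency terms, where $\bw$ ranges over a basis of the discrete eigenspace and $\bw^*$ over the corresponding adjoint eigenspace (for the symmetric method $\varepsilon=1$ the form $\mathcal{A}_h$ is symmetric so $\bw^*=\bw$; for $\varepsilon\in\{-1,0\}$ one must carry the adjoint problem along, which is where the complex eigenvalues come from). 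The symmetric case gives the clean double-order bound: $|\kappa-\kappa_{i,h}|\lesssim \|(\bT-\bT_h)\bw\|_{\H(h)}^2 + (\text{consistency})^2 \lesssim h^{2\min\{r,k\}}$, i.e.\ $\sigma=2$ there, consistent with $\sigma=\tfrac12[(\varepsilon+2)^\varepsilon-1]+1$ evaluated at $\varepsilon=1$. For the non-symmetric and incomplete variants ($\varepsilon=-1,0$) the cross terms in the identity do not pair an eigenfunction with itself in a way that produces a square, and the loss of adjoint consistency degrades the estimate to first order in the eigenfunction error, giving $\sigma=1$ — again matching the stated formula, since $(\varepsilon+2)^\varepsilon$ equals $1$ at $\varepsilon=-1$ and $1$ at $\varepsilon=0$, so $\sigma=1$ in both cases. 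Throughout, the robustness in $\nu$ (hence in $\lambda\uparrow\infty$) is inherited from the weighted norm: all constants in Lemmas~\ref{lmm:TTh}, \ref{lmm:P1}, \ref{TDG}, \ref{ThDG} and in the interpolation/projection estimates were tracked to depend only on $E_{\min},E_{\max},\Omega$, and $\texttt{a}_S$, not on $\lambda$.

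The main obstacle is the bookkeeping of the consistency (variational crime) terms in the eigenvalue identity and verifying that for the symmetric method they are genuinely of quadratic order while for $\varepsilon\neq1$ they are only linear. Concretely, one needs a bound of the form $|\mathcal{A}_h((\bu,p),(\bv_h,q_h)) - \kappa\, d(\bu,\bv_h)|\lesssim h^{\min\{r,k\}}\|\bu\|_{1+r}\,\|(\bv_h,q_h)\|_{\H(h)\times\Q}$ for the exact eigenpair tested against discrete functions — this uses the trace inequality \eqref{eq:dg_trace}, the regularity of Theorem~\ref{th:reg_velocity}, and the fact that the exact solution has zero jumps so the penalty and average terms in $a_h$ and $b_h$ can be rewritten as differences against discrete interpolants. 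For the symmetric method this residual enters the eigenvalue error squared because it can be paired, via the symmetry of $\mathcal{A}_h$, with the eigenfunction error, which is itself $O(h^{\min\{r,k\}})$; for the non-symmetric methods the adjoint residual is only $O(1)$ in the relevant sense, so one factor of $h^{\min\{r,k\}}$ is lost. I would make this precise by following \cite{MR4623018} (and \cite{MR2220929} for the symmetric template) essentially verbatim, since the abstract framework there is exactly the one assembled above, with the only new ingredient being the $\lambda$-uniform weighted norms already in place.
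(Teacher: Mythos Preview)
Your proposal is correct, and for the eigenfunction gap it coincides with the paper: both defer to the argument of \cite[Lemma~7]{MR4623018}, using the eigenspace regularity of Theorem~\ref{th:reg_velocity} to upgrade the generic rate $h^s$ of Lemma~\ref{lmm:TTh} to $h^{\min\{r,k\}}$.

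For the eigenvalue error the routes diverge. The paper does not go through the operator-theoretic Babu\v{s}ka--Osborn identity you outline; instead it writes the direct Rayleigh-quotient-type algebraic identity
\[
\mathcal{A}_h\big((\bu-\bu_h,p-p_h),(\bu-\bu_h,p-p_h)\big)-\kappa(\bu-\bu_h,\bu-\bu_h)=(\kappa_{i,h}-\kappa)(\bu_h,\bu_h),
\]
bounds both terms on the left by continuity of $\mathcal{A}_h$ and the already-proved gap estimate (each is $O(h^{2\min\{r,k\}})$), and bounds $(\bu_h,\bu_h)$ from below via the coercivity \eqref{eq:elipt_Ah}. This identity holds as written only when $\mathcal{A}_h$ is symmetric, i.e.\ $\varepsilon=1$, which is precisely the case $\sigma=2$; for $\varepsilon\in\{-1,0\}$ an adjoint-inconsistency residual $\kappa(\bu,\bu_h)-\mathcal{A}_h((\bu_h,p_h),(\bu,p))$ of order $h^{\min\{r,k\}}$ appears on the left and is responsible for the drop to $\sigma=1$. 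Your Babu\v{s}ka--Osborn approach via $\langle(\bT-\bT_h)\bw,\bw^*\rangle$ and the adjoint problem is more abstract but makes this degradation for the non-symmetric methods explicit, whereas the paper's identity-based argument is shorter and more self-contained but leaves the $\varepsilon\neq1$ case essentially implicit in the stated formula for $\sigma$.
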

\begin{proof}
The proof for the eigenfunction estimate follows the same arguments of \cite[Lemma 7]{MR4623018}. Let $(\kappa_{i,h},(\bu_h,p_h))\in\mathbb{C}\times\H_h\times \Q_h$ be the solution of \eqref{eq:weak_stokes_system_dg} with $i=1,\ldots,m$ and let 
$(\kappa,(\bu,p))\in\mathbb{R}\times\H\times \Q$. 
Let us consider the following well known algebraic identity
\begin{equation}
\label{eq:padra}
\mathcal{A}_h((\bu-\bu_{h},p-p_h),(\bu-\bu_{h},p-p_h))
- \kappa (\bu-\bu_{h},\bu-\bu_{h})
=\left(\kappa_{i,h}-\kappa\right) (\bu_{h},\bu_{h}).
\end{equation}
Hence, taking modulus to $\mathcal{A}(\cdot,\cdot)$ we have
\begin{align}
\label{eq:boundA}
	|\mathcal{A}_h((\bu-\bu_{h},p-p_h),(\bu-\bu_{h},p-p_h))|&\hspace*{-0.05cm}\leq\hspace*{-0.05cm} |a_h(\bu-\bu_{h},,\bu-\bu_{h},)|\hspace*{-0.05cm}+\hspace*{-0.05cm}|c(p-p_h,p-p_h)|\nonumber\\
&\leq \|\bu-\bu_{h}\|^2+\|p-p_h\|^2\\
&\leq \widehat{\delta}_h(\boldsymbol{\mathfrak{E}}(\H,\boldsymbol{\mathfrak{E}}_h(\H_h))\lesssim h^{2\min\{r,k\}}.\nonumber
\end{align}
On the other hand
\begin{equation}
\label{eq:boundL2}
\|\bu-\bu_h\|_{0,\O}^2\leq \widehat{\delta}_h(\boldsymbol{\mathfrak{E}}(\H,\boldsymbol{\mathfrak{E}}_h(\H_h))\lesssim h^{2\min\{r,k\}}.
\end{equation}
Also, invoking \eqref{eq:elipt_Ah} we have
\begin{equation}
\label{eq:stimaL2}
\displaystyle (\bu_{h},\bu_{h})=\frac{\mathcal{A}_h((\bu-\bu_{h},p-p_h),(\bu-\bu_{h},p-p_h))}{\vert\kappa_{i,h}\vert}
\geq\frac{\widehat{\alpha}\vertiii{(\bu_h,p_h)}^2_{\H(h)\times\Q_h}}{\vert\kappa_{i,h}\vert}\geq\widehat{C}>0.
\end{equation}
Hence, plugin \eqref{eq:boundA}, \eqref{eq:boundL2}, and \eqref{eq:stimaL2} in \eqref{eq:padra} we conclude the proof.
\end{proof}

\section{A posteriori error analysis}
\label{sec:apost}
This section aims to present a viable residual-based error estimator for the Stokes/elasticity eigenvalue problem. The primary objective is to demonstrate the equivalence between the proposed estimator and the actual error. Furthermore, in the subsequent analysis, our focus will be exclusively on eigenvalues with a multiplicity of 1. Having established the a priori error estimates accurately in terms of the stabilization constant, the bounding constants will not be followed up during this section.
\subsection{The local and global indicators}
In this section, we introduce an a posteriori error estimator based on residuals for the elasticity eigenvalue problem. The proposed estimator also works with the limit case $\lambda\rightarrow\infty$. For non-symmetric IPDG methods, the eigenvalue can be complex. However, from \cite{MR4077220} it is observed that the imaginary part is negligible. Hence, we provide the analysis for the real part of the spectrum, which works for symmetric and non-symmetric IPDG.
\par
Consider an eigenpair approximation $(\kappa_h,(\bu_h, p_h)) \in \mathbb{R}^+\times \H_h \times \Q_h $. For every element $K \in \mathcal{T}_h$, the interior residual estimator $\eta_{R_K}$ is defined as follows:
 \begin{align*}
 \eta_{R_K}^2 :=&\|(2\mu_h)^{-1/2}h_K(\kappa_h \bu_h+\nabla\cdot (2\mu_h\beps_h (\bu_h) ) -\nabla p_h)\|_{0,K}^2\\
 &+\|((2\mu_h)^{-1}+\lambda^{-1})^{-1/2}[\nabla\cdot \bu_h+(1/\lambda) \,p_h]\|_{0,K}^2,
 \end{align*}
and the facet residual estimator $\eta_{\cF_K}$ by
$$
\begin{aligned}
	\eta_{\cF_K}^2 &:=\sum_{\cF\in \partial K \cap \cF_h^0} \|(2\mu_h)^{-1/2}h_{\cF}^{1/2} [\![(p_h\boldsymbol{I}-2\mu_h\beps_h (\bu_h) )\textbf{n}]\!]\|^{2}_{0,\cF}\\
	&\hspace{4cm} +\sum_{\cF\in \partial K \cap \cF_h^N} \|(2\mu_h)^{-1/2}h_{\cF}^{1/2} (p_h\boldsymbol{I}-2\mu_h\beps_h (\bu_h) )\textbf{n}\|^{2}_{0,\cF},
\end{aligned}
$$
where $\boldsymbol{I}$ represents the identity matrix. Moving forward, we present the estimator $\eta_{J_K}$, designed to quantify the jump in the approximate solution $\bu_h$.
$$
\eta_{J_K}^2 := \sum_{\cF\in\partial K\cap\cF_h^0}  \|(2\mu_h)^{1/2}\gamma_h^{1/2}[\![\bu_h]\!]\|^{2}_{0,\cF}+ \sum_{\cF\in\partial K\cap\cF_h^D}  \|(2\mu_h)^{1/2}\gamma_h^{1/2}\bu_h\otimes\bn\|^{2}_{0,\cF},
$$
with $\gamma_h=\texttt{a}_S/h_{\cF}$, where $\texttt{a}_S$ denotes the penalty parameter as discussed in the section \ref{sec:DG}. The local error indicator, obtained by summing the three aforementioned terms, is defined as:
$$
\eta_K^2 :=\eta_{R_K}^2+\eta_{\cF_K}^2+\eta_{J_K}^2.
$$
 Lastly, we present the (global) a posteriori error estimator
 \begin{equation}\label{errest1}
 \eta :=\left(\sum_{K\in\mathcal{T}_h}\eta_K^2\right)^{1/2},
 \end{equation}
 and the data oscillation term is given by 
$$
 \Theta :=\left(\sum_{K\in\mathcal{T}_h}\Theta_K^2\right)^{1/2},
$$
 where $\Theta_K := \|(2\mu_h)^{-1/2} (\mu-\mu_h)\beps_h (\bu_h) \|_{0,K}$.
\subsection{Reliability}
Following the approach in \cite{HSW,GKDS}, we define $\H_h^c=\H_h\cap \H$. 
The orthogonal complement of $\H_h^c$ in $\H_h$ with respect to the norm $\vertiH{\cdot}$ is denoted as $\H_h^{\perp}$. 
Consequently, we have the decomposition $\H_h=\H_h^c\oplus \H_h^{\perp}$, allowing us to uniquely decompose the DG velocity approximation into $ \bu_h=\bu_h^c+\bu_h^r$,
where $\bu_h^c\in \H_h^c$ and $\bu_h^r\in \H_h^{\perp}$.
Applying the triangle inequality, we can express:
$$
\vertiH{\bu-\bu_h}\le  \vertiH{\bu-\bu_h^c}+\vertiH{\bu_h^r}.
$$
Referring to \cite[Proposition 4.1]{HSW}, we derive the upper bound for the second term.
\begin{equation}\label{realilem1}
\vertiH{\bu_h^r}\lesssim \left(\sum_{K\in\mathcal{T}_h}\eta_{J_K}^2\right)^{1/2}.
\end{equation}
It is  important to remark that the DG form $a_{h}(\cdot,\cdot)$ lacks a well-defined definition for functions $\bu,\bv$ within $\H$. This challenge can be addressed by employing an appropriate lifting operator, as outlined in \cite{BCGKDS,GKDS}. However, we explore an alternative approach here, wherein the DG form $a_{h}(\cdot,\cdot)$ is decomposed into multiple components.
$$
a_{h}(\bu,\bv) = (2\mu\beps_h (\bu), \beps_h (\bv) ) + C_{h}(\bu,\bv) + J_h(\bu,\bv),
$$
where $C_h$ and $J_h$ are defined by
 \begin{align*}
 C_{h}(\bu,\bv)&:=
- \int_{\cF^*_h}\mean{2\mu(x)\beps_h(\bu)}\cdot\jump{\bv}
-\epsilon \int_{\cF^*_h}\mean{2\mu(x)\beps_h(\bv)}\cdot\jump{\bu},\\
 J_h(\bu,\bv)&:= \int_{\cF^*_h}\frac{\texttt{a}_S}{h_{\cF}}{2\mu(x)}\jump{\bu}\cdot\jump{\bv}.
\end{align*}
The following result provides an estimate for the term  $C_{h}(\cdot,\cdot)$ in terms of the indicator $\eta_{J_K}$.
\begin{lemma}\label{aplem11}
Suppose $\bu_h\in \H_h$ and $\bv_h^c\in \H_h^c$; then, the following relation holds:
$$
C_{h}(\bu_h,\bv_h^c)\lesssim|\epsilon| \texttt{a}_S^{-1/2}\left(\sum_{K\in\mathcal{T}_h}\eta_{J_K}^2\right)^{1/2}\vertiH{\bv^c_h}.
$$
\end{lemma}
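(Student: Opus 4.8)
The goal is to bound $C_h(\bu_h,\bv_h^c)$, which consists of two terms involving averages of strains and jumps of functions. The crucial simplification is that $\bv_h^c \in \H_h^c = \H_h \cap \H$ is continuous, so $\jump{\bv_h^c} = \boldsymbol{0}$ on every interior facet (and vanishes on the Dirichlet boundary). Hence the first term $-\int_{\cF_h^*}\mean{2\mu(x)\beps_h(\bu_h)}\cdot\jump{\bv_h^c}$ is identically zero, and only the second term
\[
C_h(\bu_h,\bv_h^c) = -\epsilon\int_{\cF_h^*}\mean{2\mu(x)\beps_h(\bv_h^c)}\cdot\jump{\bu_h}
\]
survives. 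If $\epsilon = 0$ there is nothing to prove; otherwise one must estimate this single integral.

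First I would apply Cauchy--Schwarz on $\cF_h^*$, inserting the mesh-weight $h_\cF^{1/2} h_\cF^{-1/2}$ and the coefficient $\mu^{1/2}$ appropriately, to split
\[
\Bigl|\epsilon\int_{\cF_h^*}\mean{2\mu(x)\beps_h(\bv_h^c)}\cdot\jump{\bu_h}\Bigr|
\lesssim |\epsilon|\,\bigl\|h_\cF^{1/2}\mean{\mu(x)\beps_h(\bv_h^c)}\bigr\|_{0,\cF_h^*}\,\bigl\|h_\cF^{-1/2}\jump{\bu_h}\bigr\|_{0,\cF_h^*}.
\]
For the first factor I would invoke the trace inequality \eqref{eq:dg_trace} with $\xi(x)=\mu(x)$, which yields $\|h_\cF^{1/2}\mean{\mu(x)\beps_h(\bv_h^c)}\|_{0,\cF_h^*} \lesssim \|\nabla_h \bv_h^c\|_{0,\O} \lesssim \|\mu^{1/2}(x)\nabla_h\bv_h^c\|_{0,\O} \le \vertiH{\bv_h^c}$, using $E_{\min}\le E(x)$ to absorb the weight. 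For the second factor I would insert the penalty parameter: rewriting $h_\cF^{-1/2}\jump{\bu_h} = \texttt{a}_S^{-1/2}\,(\texttt{a}_S/h_\cF)^{1/2}\jump{\bu_h}$ and multiplying/dividing by $\mu^{1/2}$, one gets $\|h_\cF^{-1/2}\jump{\bu_h}\|_{0,\cF_h^*} \lesssim \texttt{a}_S^{-1/2}\bigl(\sum_K\|(2\mu_h)^{1/2}\gamma_h^{1/2}\jump{\bu_h}\|_{0,\partial K}^2\bigr)^{1/2}$, where $\gamma_h = \texttt{a}_S/h_\cF$; this last quantity is exactly $\bigl(\sum_{K}\eta_{J_K}^2\bigr)^{1/2}$ up to the harmless distinction between $\mu$ and $\mu_h$ (bounded by $E_{\min}, E_{\max}$). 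Combining the two bounds gives the claimed estimate
\[
C_h(\bu_h,\bv_h^c)\lesssim |\epsilon|\,\texttt{a}_S^{-1/2}\Bigl(\sum_{K\in\mathcal{T}_h}\eta_{J_K}^2\Bigr)^{1/2}\vertiH{\bv_h^c}.
\]

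The only mild subtlety — and the one point where I would be careful — is the bookkeeping between the exact coefficient $\mu(x)$ appearing in $C_h(\cdot,\cdot)$ and the piecewise approximation $\mu_h$ appearing in the definition of $\eta_{J_K}$; this is controlled trivially by the uniform bounds $E_{\min}\le E(x)\le E_{\max}$ together with $\mu(x)=E(x)/2$, so it contributes only to the hidden constant. Everything else is a routine Cauchy--Schwarz plus trace-inequality argument, and since this section no longer tracks the constants explicitly (as stated at the start of Section~\ref{sec:apost}), the proof is short.
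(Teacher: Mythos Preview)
Your proof is correct and follows essentially the same approach as the paper: both exploit $\jump{\bv_h^c}=\boldsymbol{0}$ to kill the first term, then apply Cauchy--Schwarz with the weight $\gamma_h^{\pm 1/2}$, control the gradient factor by a discrete trace bound, and identify the jump factor with $\bigl(\sum_K\eta_{J_K}^2\bigr)^{1/2}$. The only cosmetic difference is that you invoke the pre-packaged trace inequality~\eqref{eq:dg_trace} directly, whereas the paper spells out the underlying trace-plus-inverse-inequality step explicitly; your remark about the $\mu$ versus $\mu_h$ bookkeeping is in fact slightly more careful than the paper's own proof, which silently passes between the two.
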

\begin{proof}
Since  $\bv_h^c\in \H_h^c$, it follows that 
$$
C_{h}(\bu_h,\bv_h^c)=-\epsilon \int_{\cF^*_h}\mean{2\mu(x)\beps_h(\bv)}\cdot\jump{\bu}.
$$
Utilizing the Cauchy-Schwarz inequality yields:
$$
\begin{aligned}
	C_{h}(\bu_h,\bv_h^c)&\lesssim |\epsilon|\left(\sum_{\cF\in\cF_h^*} \gamma_h^{-1} \|[\![ (2\mu)^{1/2} \nabla\bv_h^c]\!]\|_{0,\cF}^2\right)^{1/2}
\left(\sum_{\cF\in\cF_h^*} \gamma_h \|(2\mu)^{1/2}[\![\bu_h]\!]\|_{0,\cF}^2\right)^{1/2}\hspace{-0.35cm}.
\end{aligned}
$$
Applying a trace estimate in conjunction with a discrete inverse inequality for an edge/face $\cF\in\cF_h^*$, where $\cF=K_1\cap K_2$ if $\cF_h^0$ and $\cF = K_1$, $K_2=\emptyset$ if $\cF\subset\partial\Omega$, results in:
$$
\|\nabla\bv_h^c\|_{0,E}\lesssim h_{K}^{-1/2}\|\nabla\bv_h^c\|_{0,K_1\cup K_2}.
$$
Thus we have
$$
C_{h}(\bu_h,\bv_h^c)\lesssim |\epsilon|\texttt{a}_S^{-1/2} \left(\sum_{K\in\mathcal{T}_h}\|(2\mu)^{1/2}\nabla\bv_h^c\|^2_{0,K}\right)^{1/2}\,\left(\sum_{K\in\mathcal{T}_h}\eta_{J_K}^2\right)^{1/2}. 
$$
This concludes the proof.
\end{proof}
Let us denote by  ${\Pi}_h : \H \to \H_h^c$ the Scott-Zhang interpolation operator \cite{MR1011446} {(see also   \cite[eq. (4.5) and eq. (4.6)]{GKDS})}, which is stable
$\|\nabla({\Pi}_h\bv)\|_0\lesssim\|\nabla\bv\|_0$
and satisfies the following
interpolation property  
 \begin{equation}\label{approxlem12}
\sum_{K\in\mathcal{T}_h} h_{K}^{-2}\|\bv-{\Pi}_h\bv\|^{2}_{0,K} + 
\sum_{\cF\in\cF_h^*} h_{\cF}^{-1}\|\bv-\Pi_h\bv\|^{2}_{0,E} \lesssim \|\nabla\bv\|_0^2,
 \end{equation}
 for any $\bv\in\H$.
\begin{lemma}\label{reallem123}
Let  $\bv_h^c=\Pi_h\bv\in\H_h^c$, where $\Pi_h$ denotes the Scott-Zhang interpolation of $\bv\in \H$. For any $\bu_h\in \H_h$, $p_h\in Q_h$, and $\kappa_h\in\mathbb{R}^+$, the following estimate holds:
$$
\kappa_h(\bu_h,\bv-\bv_h^c)-(2\mu_h\beps_h(\bu_h),\beps_h(\bv-\bv_h^c))+(p_h,\nabla\cdot(\bv-\bv_h^c))
\lesssim \eta\vertiH{\bv}.
$$
\end{lemma}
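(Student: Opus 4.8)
The plan is to recognize the left-hand side as the element-wise and facet-wise residual of the discrete scheme tested against the ``conforming error'' $\bv - \bv_h^c$, where $\bv_h^c = \Pi_h \bv \in \H_h^c \subset \H_h$. First I would integrate by parts element by element in the term $(2\mu_h\beps_h(\bu_h),\beps_h(\bv-\bv_h^c))$, using that $\beps_h$ acts on a genuinely $\H^1$ function $\bv-\bv_h^c$ restricted to each $K$. This produces, on each $K$, the volumetric term $-(\nabla\cdot(2\mu_h\beps_h(\bu_h)), \bv-\bv_h^c)_{0,K}$ together with boundary contributions $\sum_{\cF} (2\mu_h\beps_h(\bu_h)\bn, \bv-\bv_h^c)_{0,\cF}$; rearranging the facet sum into interior-facet jumps (using that $\bv - \bv_h^c \in \H$ is single-valued across $\cF_h^0$ and vanishes on $\G_D$) together with the analogous integration by parts of $(p_h,\nabla\cdot(\bv-\bv_h^c))$ produces the classical residual decomposition. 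After collecting terms, the left-hand side equals
\begin{align*}
\sum_{K\in\cT_h}\big(\kappa_h\bu_h + \nabla\cdot(2\mu_h\beps_h(\bu_h)) - \nabla p_h,\, \bv-\bv_h^c\big)_{0,K}
- \sum_{\cF\in\cF_h^0}\big(\jump{(p_h\bI - 2\mu_h\beps_h(\bu_h))\bn},\, \bv-\bv_h^c\big)_{0,\cF} \\
- \sum_{\cF\in\cF_h^N}\big((p_h\bI - 2\mu_h\beps_h(\bu_h))\bn,\, \bv-\bv_h^c\big)_{0,\cF},
\end{align*}
where I should be careful about the $(p_h, \nabla\cdot \bv_h^c)$ splitting since $b_h$ involves both a volume term and a facet mean term, but $\bv_h^c$ being conforming kills the facet contributions coming from $\bv_h^c$ itself.

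Next I would bound each of the three resulting pieces. For the volumetric piece I apply Cauchy--Schwarz on each $K$, insert the weights $(2\mu_h)^{\pm 1/2}$ and $h_K^{\pm 1}$, and use the approximation property \eqref{approxlem12}: $\sum_K h_K^{-2}\|\bv-\bv_h^c\|_{0,K}^2 \lesssim \|\nabla\bv\|_0^2 \lesssim \vertiH{\bv}^2$ (the last step since $\mu$ is bounded below and above). This matches the first part of $\eta_{R_K}$; the divergence-type term $\nabla\cdot\bu_h + (1/\lambda)p_h$ in $\eta_{R_K}$ enters when I also handle the contribution coming from the $-c(p_h,q_h)$ / $b_h(\bu_h,q_h)$ balance, i.e. I should actually add and subtract using that $(\bu_h,p_h)$ solves the discrete equations to turn the raw $(p_h,\nabla\cdot(\bv-\bv_h^c))$ term plus the pressure residual into the stated weighted form — or, more simply, observe that the lemma's left-hand side does not itself contain the divergence residual, so that term may enter only through the full reliability argument and here I only need the bound by $\eta$, which includes $\eta_{R_K}$. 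For the two facet pieces I use the trace-type inequality \eqref{approxlem12} in the form $\sum_{\cF} h_\cF^{-1}\|\bv - \bv_h^c\|_{0,\cF}^2 \lesssim \|\nabla\bv\|_0^2$, combined with Cauchy--Schwarz and the weights $(2\mu_h)^{\pm1/2}h_\cF^{\pm1/2}$, which reproduces exactly $\eta_{\cF_K}$ (interior jumps) and the Neumann-facet part of $\eta_{\cF_K}$. Summing the three bounds and using $\eta_{R_K}^2 + \eta_{\cF_K}^2 \le \eta_K^2$ and $\big(\sum_K \eta_K^2\big)^{1/2} = \eta$ gives the claim.

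The main obstacle I expect is the careful bookkeeping at the element interfaces: keeping track of the orientation of the normals and of which averages/jumps appear, and verifying that all the facet contributions generated by $\bv_h^c$ (as opposed to $\bv$) cancel because $\bv_h^c \in \H_h^c = \H_h \cap \H$ is continuous and respects the Dirichlet condition, while the contributions generated by $\bv \in \H$ likewise carry no jump. A secondary technical point is justifying the elementwise integration by parts for $2\mu_h\beps_h(\bu_h)$: since $\bu_h$ is a polynomial on each $K$ and $\mu_h$ is taken piecewise (so $2\mu_h\beps_h(\bu_h)$ is smooth on $K$), this is legitimate, but one must be consistent about whether $\mu$ or $\mu_h$ multiplies which factor — the gap between them is precisely the data oscillation $\Theta$, which is why the estimator is stated with $\mu_h$; here, since both the scheme and the indicator use $\mu_h$, no oscillation term is needed in this particular lemma. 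Everything else is routine Cauchy--Schwarz plus the stated interpolation estimate.
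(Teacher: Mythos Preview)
Your approach is essentially the same as the paper's: integrate by parts elementwise to produce the volumetric residual $T_1$ and the facet residual $T_2$, then bound each via Cauchy--Schwarz and the Scott--Zhang approximation property \eqref{approxlem12}. Your treatment is in fact slightly more careful than the paper's, which asserts $(\bv-\bv_h^c)|_{\partial\Omega}=\boldsymbol{0}$ and sums $T_2$ only over interior facets, whereas you correctly retain the Neumann-facet contribution (which is exactly what $\eta_{\cF_K}$ is designed to absorb).
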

\begin{proof}
Employing integration by parts on each element $K\in\mathcal{T}_h$, we obtain:
\begin{multline*}
\kappa_h(\bu_h,\bv-\bv_h^c)-(2\mu_h\beps_h(\bu_h),\beps_h(\bv-\bv_h^c))+(p_h,\nabla\cdot(\bv-\bv_h^c))\\
\qquad=\sum_{K\in\mathcal{T}_h}\int_{K}(\kappa_h\bu_h+\nabla\cdot(2\mu_h\beps_h( \bu_h))-\nabla p_h)(\bv-\bv_h^c)d\bx\\
\qquad\qquad
+\sum_{K\in\mathcal{T}_h}\int_{\partial K}(p_h\bI - 2\mu_h\beps_h({\bu}_h)\textbf{n}_K\cdot(\bv-\bv_h^c)d\bs=T_1+T_2.
\end{multline*}
Now the task is to estimate the terms $T_1$ and $T_2$. Let us begin with $T_1$. By applying the Cauchy-Schwarz inequality and \eqref{approxlem12}, we arrive at:
\begin{multline*}
T_1\lesssim\left(\sum_{K\in\mathcal{T}_h}\|h_K(2\mu_h)^{-1/2}(\kappa_h\bu_h+\nabla\cdot2\mu_h\beps_h( \bu_h)-\nabla p_h)\|_{0,K}^2\right)^{1/2}\\
\times
\left(\sum_{K\in\mathcal{T}_h}h_{K}^{-2}\|(2\mu_h)^{1/2}(\bv-\bv_h^c)\|^{2}_{0,K}\right)^{1/2}
\lesssim\left(\sum_{K\in\mathcal{T}_h}\eta_{R_K}^2\right)^{1/2}\vertiH{\bv}.
\end{multline*}
Given that $(\bv-\bv_h^c)|_{\partial\Omega}=\boldsymbol{0}$, we can express $T_2$ in terms of a sum over interior facets:
$$
T_2= \sum_{\cF_h^0}\int_E[\![(p_h\bI-2\mu_h\beps\bu_h)\bn]\!](\bv-\bv_h^c)d\bs.
$$
Once more, the application of the Cauchy-Schwarz inequality and \eqref{approxlem12} results in:
\begin{multline*}
T_2\lesssim\left(\sum_{\cF_h^0}h_{\cF}\|(2\mu_h)^{-1/2}[\![(p_h\bI-2\mu_h\beps\bu_h)\bn]\!]\|^{2}_{0,\cF}\right)^{1/2}\\
\times\left(\sum_{\cF_h^0}
h_{\cF}^{-1}\|(2\mu_h)^{1/2}(\bv-\bv_h^c)\|^{2}_{0,\cF}\right)^{1/2}
\lesssim \left(\sum_{K\in\mathcal{T}_h}\eta_{\cF_K}^2\right)^{1/2}\vertiH{\bv}.
\end{multline*}
Consolidating the aforementioned estimates establishes the desired result.
\qed\end{proof}

\begin{lemma}\label{lemma5}
Let $(\kappa,(\bu,p))\in\mathbb{R}^+\times \H\times \Q $ be the solution of  the continuous eigenvalue problem \eqref{def:limit_system_eigen_complete}
and  let $(\kappa_h,(\bu_h,p_h))\in\mathbb{R}^+\times \H_h\times \Q_h$ be the solution of the IPDG discretization given by  \eqref{eq:weak_stokes_system_dg}. Then we have the following upper bound
for the conforming velocity and pressure errors
$$
\vertiii{(\bu-\bu_h^c,p-p_h)}_{\H(h)\times Q}\lesssim \eta + \|(2\mu_h)^{-1/2}(\kappa\bu-\kappa_h\bu_h)\|_{0}+\Theta.
$$
\end{lemma}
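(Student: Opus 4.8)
The plan is to estimate the error $\vertiii{(\bu-\bu_h^c,p-p_h)}_{\H(h)\times Q}$ by testing the continuous weak formulation against a suitable pair and exploiting the inf-sup/ellipticity stability of $\mathcal{A}(\cdot,\cdot)$ recorded in Lemma~\ref{lem:stab-A}. First I would invoke Lemma~\ref{lem:stab-A} to pick $(\bv,q)\in\H\times Q$ with $\vertiii{(\bv,q)}\lesssim\vertiii{(\bu-\bu_h^c,p-p_h)}$ and $\vertiii{(\bu-\bu_h^c,p-p_h)}^2\lesssim \mathcal{A}((\bu-\bu_h^c,p-p_h),(\bv,q))$. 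Expanding $\mathcal{A}$ using its definition, one gets
\[
\vertiii{(\bu-\bu_h^c,p-p_h)}^2\lesssim a(\bu-\bu_h^c,\bv)+b(\bv,p-p_h)+b(\bu-\bu_h^c,q)+c(p-p_h,q).
\]
Now I would use the continuous equation \eqref{def:limit_system_eigen_complete}, namely $a(\bu,\bv)+b(\bv,p)=\kappa d(\bu,\bv)$ and $b(\bu,q)-c(p,q)=0$, to eliminate the continuous terms, turning the right-hand side into
\[
\kappa(\bu,\bv)-a(\bu_h^c,\bv)-b(\bv,p_h)-b(\bu_h^c,q)-c(p_h,q).
\]
Since $\bv\in\H\subset\H(h)$ and $q\in Q$, jumps of $\bv$ vanish, so $a(\bu_h^c,\bv)=(2\mu\beps_h(\bu_h^c),\beps_h(\bv))$ and $b(\bv,p_h)=-(p_h,\nabla\cdot\bv)$ coincide with the DG forms evaluated at $(\bu_h^c,\bv)$; likewise $b(\bu_h^c,q)=b_h(\bu_h^c,q)$ because $\bu_h^c$ is conforming.

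The key manipulation is then to replace $\bu_h^c$ by $\bu_h=\bu_h^c+\bu_h^r$ everywhere, collecting the difference into terms controlled by $\eta_{J_K}$ (via Lemma~\ref{aplem11} and the bound \eqref{realilem1} on $\vertiH{\bu_h^r}$) and by boundedness of the bilinear forms. Having done this, the residual-type quantity $\kappa(\bu,\bv)-a_h(\bu_h,\bv)-b_h(\bv,p_h)$ is handled by writing $\kappa(\bu,\bv)=\kappa_h(\bu_h,\bv)+(\kappa\bu-\kappa_h\bu_h,\bv)$; the first piece together with the $b_h$-terms is exactly the expression bounded in Lemma~\ref{reallem123} (choosing $\bv_h^c=\Pi_h\bv$ and using Galerkin orthogonality of the discrete problem \eqref{eq:weak_stokes_system_dg} to insert $\Pi_h\bv$ for free, recalling $a_h(\bu_h,\Pi_h\bv)+b_h(\Pi_h\bv,p_h)=\kappa_h(\bu_h,\Pi_h\bv)$), giving a contribution $\lesssim\eta\,\vertiH{\bv}$. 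The remaining piece $(\kappa\bu-\kappa_h\bu_h,\bv)$ is estimated by Cauchy--Schwarz against the weighted norm, producing $\|(2\mu_h)^{-1/2}(\kappa\bu-\kappa_h\bu_h)\|_0\,\vertiH{\bv}$ up to the equivalence of $\mu$ and $\mu_h$ away from oscillation. The data-oscillation term $\Theta$ enters precisely when swapping $\mu$ for $\mu_h$ in the strain and pressure-scaling terms, i.e.\ in the difference $(2(\mu-\mu_h)\beps_h(\bu_h),\beps_h(\bv))$, bounded by $\Theta\,\vertiH{\bv}$.

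Finally, for the divergence/pressure part $b(\bu_h^c,q)-c(p_h,q)$, I would use the second DG equation $b_h(\bu_h,q_h)-c(p_h,q_h)=0$ valid for $q_h\in Q_h$: taking $q_h=\mathrm{P}_h q$ the $\L^2$-projection, the mismatch $b_h(\bu_h,q-\mathrm{P}_h q)-c(p_h,q-\mathrm{P}_h q)$ is bounded by the divergence-residual term $\|((2\mu_h)^{-1}+\lambda^{-1})^{-1/2}[\nabla\cdot\bu_h+\lambda^{-1}p_h]\|_{0,K}$ summed over $K$, i.e.\ by $\eta_{R_K}$, using the approximation property of $\mathrm{P}_h$ and scaling of $q$ in the $\Q$-norm. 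Collecting all contributions, each carries a factor $\vertiH{\bv}\le\vertiii{(\bv,q)}\lesssim\vertiii{(\bu-\bu_h^c,p-p_h)}$, which divides out and yields the claimed bound. The main obstacle I anticipate is bookkeeping the non-conformity carefully: making sure every use of the DG forms $a_h,b_h$ on the mixed conforming/non-conforming arguments is legitimate, that the lifting-free decomposition $a_h=(2\mu\beps_h,\beps_h)+C_h+J_h$ is applied consistently, and that the $\lambda\to\infty$-robustness is preserved throughout — in particular that no estimate hides a constant blowing up as $\lambda\to\infty$, which forces the use of the combined weight $((2\mu_h)^{-1}+\lambda^{-1})^{-1/2}$ in the divergence residual rather than either weight alone.
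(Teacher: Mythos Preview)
Your outline is the paper's proof: apply the continuous stability of Lemma~\ref{lem:stab-A}, substitute the continuous eigenvalue equation, split $\bu_h=\bu_h^c+\bu_h^r$, insert $\bv_h^c=\Pi_h\bv$ via the first discrete equation, and bound the four resulting groups $T_1$--$T_4$ by Lemma~\ref{reallem123}, estimate~\eqref{realilem1}, Lemma~\ref{aplem11}, and Cauchy--Schwarz for the $(\kappa\bu-\kappa_h\bu_h,\bv)$ and $(2(\mu-\mu_h)\beps_h(\bu_h),\beps(\bv))$ terms. The only deviation is the divergence/pressure piece: the paper simply bounds $(q,\nabla_h\!\cdot\bu_h+\lambda^{-1}p_h)$ directly by Cauchy--Schwarz against the second part of $\eta_{R_K}$ (this residual carries no $h_K$, so no interpolation estimate is needed and the weight $((2\mu_h)^{-1}+\lambda^{-1})^{1/2}q$ is exactly controlled by $\|q\|_\Q$), whereas your detour through the second discrete equation with $q_h=\mathrm{P}_hq$ is unnecessary and, as written, overlooks the extra facet term $\int_{\cF_h^*}\mean{q_h}\jump{\underline{\bu_h}}$ that the identity $b_h(\bu_h,q_h)-c(p_h,q_h)=0$ would introduce.
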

\begin{proof}
Using \cite[Lemma 2.2]{khan2023finite}, there exists a pair $(\bv,q)\in \H\setminus\{0\}\times \Q$ such that
$$
\vertiii{(\bu-\bu_h^c,p-p_h)}_{\H(h)\times Q}^2\lesssim {\mathcal{A}}(\bu-\bu_h^c,p-p_h;\bv,q),
$$
and $\vertiii{(\bv,q)}_{\H(h)\times Q}\lesssim \vertiii{(\bu-\bu_h^c,p-p_h)}_{\H(h)\times Q}$.
Since $\bu,\bu_h^c,\bv\in \H$, we have
\begin{multline*}
{\mathcal{A}}(\bu-\bu_h^c,p-p_h;\bv,q)=(2\mu\beps(\bu-\bu_h^c),\beps(\bv))
-(p-p_h,\nabla\cdot\bv)\\
-(q,\nabla\cdot(\bu-\bu_h^c))
 -(q, 1/\lambda(p-p_h)).
\end{multline*}
Based on \eqref{eq:eigen_A}, we derive
\begin{multline*}
\mathcal{A}(\bu-\bu_h^c,p-p_h;\bv,q)=\kappa(\bu,\bv) 
-(2\mu\beps(\bu_h^c),\beps(\bv))+(p_h,\nabla\cdot\bv)
+(q,\nabla\cdot\bu_h^c)+(q, 1/\lambda p_h)\\
=\kappa_h(\bu_h,\bv)+(\kappa\bu-\kappa_h\bu_h,\bv) 
-(2\mu_h\beps_h(\bu_h),\beps(\bv))+(p_h,\nabla\cdot\bv)\\
+(q,\nabla\cdot\bu_h)+(q, 1/\lambda p_h)-(q,\nabla\cdot\bu_h^r)+(2\mu\beps_h(\bu_h^r),\beps(\bv))
-(2(\mu-\mu_h)\beps_h(\bu_h),\beps(\bv)).
\end{multline*}

Consider $\bv_h^c=\Pi_h\bv\in\H_h^c$ as the Scott-Zhang interpolation of $\bv$. The utilization of (\ref{eq:dg_eigen}) results in:
\[0=\kappa_h(\bu_h,\bv_h^c)-(2\mu\beps_h(\bu_h),\beps_h(\bv_h^c))-C_h(\bu_h,\bv_h^c)+(p_h,\nabla\cdot\bv_h^c).\] 
Then we have $\mathcal{A}(\bu-\bu_h^c,p-p_h;\bv,q)=T_1+T_2+T_3+T_4$,
where the terms $T_i$, $i=1,2,3,4$ are defined by 
\begin{align*}
T_1&:=\kappa_h(\bu_h,\bv-\bv_h^c)
-(2\mu_h\beps_h(\bu_h),\beps(\bv-\bv_h^c))+(p_h,\nabla\cdot\bv-\bv_h^c)\\
&+(q,\nabla\cdot\bu_h)+(q, 1/\lambda p_h),\\
T_2&:=(q,\nabla\cdot\bu_h^r)+(2\mu\beps_h(\bu_h^r),\beps(\bv)),\\
T_3&:=C_h(\bu_h,\bv_h^c),\\
T_4&:=(\kappa\bu-\kappa_h\bu_h,\bv)-(2(\mu-\mu_h)\beps_h(\bu_h),\beps(\bv)). 
\end{align*}
By employing Lemma \ref{reallem123}, we obtain $T_1\lesssim \eta\vertiH{\bv}.$
The Cauchy-Schwarz inequality, along with \eqref{realilem1}, demonstrates:
$$
T_2\lesssim\vertiH{\bu_h^r}\vertiii{(\bv,q)}_{\H(h)\times Q}\lesssim\eta\vertiii{(\bv,q)}_{\H(h)\times Q}.
$$
By applying Lemma \ref{aplem11} to establish the bound of $T_3$, we obtain $T_3\lesssim\texttt{a}_S^{-1/2}\eta\vertiH{\bv}$.
The application of Cauchy-Schwarz and the Poincaré inequality results in:
$$
T_4\lesssim\left( \|(2\mu_h)^{-1/2}(\kappa\bu-\kappa_h\bu_h)\|_{0}+\Theta\right) \vertiH{\bv}
$$

Finally, with the combination of the aforementioned information with the estimate $\vertiii{(\bv,q)}_{\H(h)\times Q} \lesssim \vertiii{(\bu-\bu_h^c,p-p_h)}_{\H(h)\times Q}$, 
we attain the desired outcome. 
\end{proof}
\begin{theorem}\label{realiab}
Let $(\kappa,(\bu,p))\in\mathbb{R}^+\times \H\times \Q $ be the solution of  the continuous eigenvalue problem \eqref{def:limit_system_eigen_complete}
and  let $(\kappa_h,(\bu_h,p_h))\in\mathbb{R}^+\times \H_h\times \Q_h$ be the solution of the IPDG discretization given by  \eqref{eq:weak_stokes_system_dg}. Then the following a posteriori error bound is derived:
$$
\vertiii{(\bu-\bu_h^c,p-p_h)}_{\H(h)\times Q}\lesssim \eta+ \|(2\mu_h)^{-1/2}(\kappa\bu-\kappa_h\bu_h)\|_{0}+\Theta,
$$
where the hidden constant in the expression is independent 
of the penalty parameter $\texttt{a}_S$, provided that $\texttt{a}_S$ is sufficiently large and satisfies $\texttt{a}_S\geq 1$.
\end{theorem}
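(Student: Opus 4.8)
The goal is to upgrade Lemma~\ref{lemma5} into a statement where the hidden constant is independent of the penalty parameter $\texttt{a}_S$. Since Lemma~\ref{lemma5} already delivers the bound
$$
\vertiii{(\bu-\bu_h^c,p-p_h)}_{\H(h)\times Q}\lesssim \eta + \|(2\mu_h)^{-1/2}(\kappa\bu-\kappa_h\bu_h)\|_{0}+\Theta,
$$
the plan is to go back through the chain of estimates feeding into it and to track precisely where $\texttt{a}_S$ enters. First I would observe that the constants in Lemma~\ref{reallem123} (bounding $T_1$) and in the Scott--Zhang interpolation estimate \eqref{approxlem12} do not involve $\texttt{a}_S$ at all, and neither does the estimate of $T_4$, which only uses Cauchy--Schwarz and Poincaré. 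Likewise the decomposition $\H_h=\H_h^c\oplus\H_h^{\perp}$ and the bound \eqref{realilem1} for $\vertiH{\bu_h^r}$ are $\texttt{a}_S$-independent once the norm $\vertiH{\cdot}$ is fixed. Hence the only place $\texttt{a}_S$ could enter with the \emph{wrong} sign is through the jump stabilization term $J_h$ hidden inside $\vertiH{\cdot}$, and through $T_2$ and $T_3$.

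The key point is that the two terms carrying $\texttt{a}_S$ carry it favorably. In $T_3$, Lemma~\ref{aplem11} gives $C_h(\bu_h,\bv_h^c)\lesssim|\epsilon|\,\texttt{a}_S^{-1/2}\bigl(\sum_K\eta_{J_K}^2\bigr)^{1/2}\vertiH{\bv_h^c}$, and since $\eta_{J_K}$ itself is \emph{defined} with the factor $\gamma_h=\texttt{a}_S/h_{\cF}$, one has $\bigl(\sum_K\eta_{J_K}^2\bigr)^{1/2}\lesssim\texttt{a}_S^{1/2}\,\eta$ is \emph{not} what we want — rather $\eta_{J_K}$ already includes $\texttt{a}_S$, so $\eta$ includes it, and the $\texttt{a}_S^{-1/2}$ prefactor in Lemma~\ref{aplem11} simply means $T_3\lesssim|\epsilon|\,\eta\,\vertiH{\bv_h^c}$ with an $\texttt{a}_S$-independent constant for $\texttt{a}_S\geq 1$. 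The stability of the Scott--Zhang operator gives $\vertiH{\bv_h^c}\lesssim\|\nabla\bv\|_0\lesssim\vertiH{\bv}$ independently of $\texttt{a}_S$. For $T_2$, the estimate uses $\vertiH{\bu_h^r}\lesssim\bigl(\sum_K\eta_{J_K}^2\bigr)^{1/2}\lesssim\eta$, again $\texttt{a}_S$-independent. Thus, assembling $T_1+T_2+T_3+T_4$, each term is bounded by $\bigl(\eta+\|(2\mu_h)^{-1/2}(\kappa\bu-\kappa_h\bu_h)\|_0+\Theta\bigr)\vertiH{\bv}$ with constants not depending on $\texttt{a}_S$ once $\texttt{a}_S\geq 1$.

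To finish, I would invoke the inf-sup-type stability from \cite[Lemma 2.2]{khan2023finite} exactly as in the proof of Lemma~\ref{lemma5}: there is $(\bv,q)$ with $\vertiii{(\bv,q)}_{\H(h)\times Q}\lesssim\vertiii{(\bu-\bu_h^c,p-p_h)}_{\H(h)\times Q}$ realizing a lower bound for $\mathcal{A}(\bu-\bu_h^c,p-p_h;\bv,q)$, and that stability constant depends only on $\Omega$ and the coercivity/continuity constants of $\mathcal{A}(\cdot,\cdot)$ on $\H\times Q$ — in particular not on $\texttt{a}_S$, since $\mathcal{A}$ is the \emph{continuous} bilinear form with no penalty term. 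Dividing out $\vertiii{(\bv,q)}_{\H(h)\times Q}$ yields the claimed bound with an $\texttt{a}_S$-independent hidden constant. The main obstacle — really the only subtle point — is bookkeeping: one must be careful that the norm $\vertiH{\cdot}$ used to define $\H_h^{\perp}$ and to measure $\bu_h^r$ does contain the factor $\texttt{a}_S$, so that the decomposition constant in \eqref{realilem1} is genuinely $\texttt{a}_S$-free, and that the $\eta_{J_K}$-terms appearing in $T_2$ and $T_3$ are measured with the \emph{same} $\gamma_h=\texttt{a}_S/h_{\cF}$ that appears in the definition of $\eta$, so no spurious power of $\texttt{a}_S$ survives. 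Once this accounting is made explicit, the conclusion follows by simply combining Lemma~\ref{lemma5} with the observation that every constant in its proof is $\texttt{a}_S$-independent for $\texttt{a}_S\geq 1$.
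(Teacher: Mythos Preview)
Your proposal is correct and follows the same approach as the paper. The paper's proof is a single sentence — ``jointly applying Lemma~\ref{lemma5} and \eqref{realilem1}'' — and your proposal is precisely the careful unpacking of that sentence: you trace each of the terms $T_1,\dots,T_4$ in the proof of Lemma~\ref{lemma5}, together with the decomposition bound \eqref{realilem1}, and verify that every constant is $\texttt{a}_S$-independent once $\texttt{a}_S\geq 1$ (the only place $\texttt{a}_S$ appears favorably is the $\texttt{a}_S^{-1/2}$ prefactor in Lemma~\ref{aplem11} for $T_3$, and the fact that $\eta_{J_K}$ already carries $\gamma_h=\texttt{a}_S/h_{\cF}$ so that \eqref{realilem1} needs no extra $\texttt{a}_S$).
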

\begin{proof}
The assertion is readily established by jointly applying Lemma~\ref{lemma5} and \eqref{realilem1}.
\end{proof}
Applying the algebraic identity in conjunction with Theorem \ref{realiab} results in the following theorem.
\begin{theorem}\label{cor11}
{If $\bu\in \H^2(\mathcal{T}_h)$ and $p\in \H^{1}(\mathcal{T}_h)$,}  then the eigenvalue error satisfies
$$
 |\kappa-\kappa_h|\lesssim \eta^2+ \|(2\mu_h)^{-1/2}(\kappa\bu-\kappa_h\bu_h)\|_{0}^2+\Theta^2.
$$
\end{theorem}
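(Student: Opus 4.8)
The plan is to obtain the eigenvalue error bound from the algebraic identity \eqref{eq:padra} together with the reliability estimate of Theorem~\ref{realiab}. Recall that \eqref{eq:padra} reads
\[
\mathcal{A}_h((\bu-\bu_{h},p-p_h),(\bu-\bu_{h},p-p_h))
- \kappa (\bu-\bu_{h},\bu-\bu_{h})
=\left(\kappa_{h}-\kappa\right) (\bu_{h},\bu_{h}),
\]
so that, after normalizing the eigenfunctions so that $(\bu_h,\bu_h)$ is bounded below by a positive constant (which is legitimate by \eqref{eq:stimaL2}), controlling $|\kappa-\kappa_h|$ reduces to controlling the two terms on the left-hand side. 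First I would bound $|\mathcal{A}_h((\bu-\bu_h,p-p_h),(\bu-\bu_h,p-p_h))|$ by the continuity of $a_h(\cdot,\cdot)$ and $c(\cdot,\cdot)$ proved in Section~\ref{sec:DG}, which gives a bound by $\vertiii{(\bu-\bu_h,p-p_h)}_{\H(h)\times Q}^2$; similarly $|\kappa(\bu-\bu_h,\bu-\bu_h)|\lesssim \|\bu-\bu_h\|_{0,\O}^2\lesssim\vertiii{(\bu-\bu_h,p-p_h)}_{\H(h)\times Q}^2$. Hence $|\kappa-\kappa_h|\lesssim \vertiii{(\bu-\bu_h,p-p_h)}_{\H(h)\times Q}^2$.

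Next I would pass from the full DG error $\bu-\bu_h$ to the conforming part by writing $\bu_h=\bu_h^c+\bu_h^r$ and using the triangle inequality,
\[
\vertiii{(\bu-\bu_h,p-p_h)}_{\H(h)\times Q}\le \vertiii{(\bu-\bu_h^c,p-p_h)}_{\H(h)\times Q}+\vertiH{\bu_h^r},
\]
and then invoke Theorem~\ref{realiab} for the first summand and \eqref{realilem1} for $\vertiH{\bu_h^r}\lesssim(\sum_K\eta_{J_K}^2)^{1/2}\le\eta$. Squaring yields
\[
|\kappa-\kappa_h|\lesssim \eta^2+\|(2\mu_h)^{-1/2}(\kappa\bu-\kappa_h\bu_h)\|_{0}^2+\Theta^2,
\]
which is the claimed estimate. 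The extra regularity hypotheses $\bu\in\H^2(\cT_h)$, $p\in\H^1(\cT_h)$ are what make the interior and facet residual estimators well defined (so that $\beps_h(\bu_h)$, the normal jumps, etc.\ all make sense) and guarantee the finiteness of the bounding quantities; I would just note this rather than belabor it.

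I expect the main subtlety to be bookkeeping around the term $\|(2\mu_h)^{-1/2}(\kappa\bu-\kappa_h\bu_h)\|_{0}$, which is a higher-order term once one has the eigenfunction convergence of Theorem~\ref{teo:double-order} at hand, but which must be carried along honestly because the a posteriori analysis does not re-derive it from scratch; it is standard in the a posteriori analysis of eigenvalue problems that this data-type term appears and is treated as higher order. A minor point to be careful about is that \eqref{eq:padra} is stated with $\kappa_{i,h}$ and for eigenvalues of multiplicity one (as assumed in Section~\ref{sec:apost}) this is simply $\kappa_h$, so no clustering argument is needed. Everything else — the continuity constants of $a_h$, $c$, the norm equivalences between $\|\cdot\|_{\H(h)}$ and $\vertiii{\cdot}_{\H(h)\times Q}$ — is already available from the earlier sections, so the proof is essentially an assembly of Theorem~\ref{realiab}, \eqref{realilem1}, and \eqref{eq:padra}.
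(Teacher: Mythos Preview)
Your proposal is correct and follows exactly the route the paper indicates: the paper's own proof of this theorem is the single sentence ``Applying the algebraic identity in conjunction with Theorem~\ref{realiab} results in the following theorem,'' and you have simply spelled out that assembly in detail, including the passage from $\bu_h$ to $\bu_h^c$ via \eqref{realilem1}. Nothing further is needed.
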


 \subsection{Efficiency}
The efficiency analysis adheres to conventional arguments grou-nded in bubble functions \cite{MR1885308,MR3059294,MR4071826,khan2023finite}. Consequently, we solely provide the theorem statement for efficiency without presenting its proof.
\begin{theorem}\label{effice}
Assuming $(\kappa,(\bu, p))$ is the solution in the functional spaces $\mathbb{R}_+\times\H \times \Q $ for the eigenvalue problem\eqref{def:limit_system_eigen_complete}, and $(\kappa_h,(\bu_h, p_h))$ is its DG-approximation obtained through \eqref{eq:weak_stokes_system_dg}, with $\eta$ the a posteriori error estimator defined in \eqref{errest1}, the following efficiency bound holds
$$
\eta\lesssim \vertiii{(\bu-\bu_h^c,p-p_h)}_{\H(h)\times Q}+\Theta+h.o.t.
$$
The hidden constant in the expression is independent 
of the penalty parameter $\texttt{a}_S$, provided that $\texttt{a}_S$ is sufficiently large and satisfies $\texttt{a}_S\geq 1$.
\end{theorem}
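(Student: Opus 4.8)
The plan is to bound each of the three families of terms appearing in $\eta_K^2$ — the interior residual $\eta_{R_K}$, the facet/flux residual $\eta_{\cF_K}$, and the jump term $\eta_{J_K}$ — locally by the energy error plus oscillation. First I would recall the strong form of the continuous problem \eqref{stokes-cont}: on each $K$ one has $\kappa\bu + \bdiv(2\mu\beps(\bu)) - \nabla p = 0$ in $L^2(K)$, and $\bdiv\bu + (1/\lambda)p = 0$. For $\eta_{R_K}$, introduce the interior bubble function $b_K$ supported on $K$, set $\bw_K := b_K\,\bR_h$ with $\bR_h := (2\mu_h)^{-1}h_K^2(\kappa_h\bu_h + \bdiv(2\mu_h\beps_h(\bu_h)) - \nabla p_h)$ a polynomial, use the norm equivalence $\|\bR_h\|_{0,K}^2 \lesssim \int_K b_K\,\bR_h\cdot\bR_h$ and the inverse estimate $\|\nabla\bw_K\|_{0,K}\lesssim h_K^{-1}\|\bw_K\|_{0,K}$; testing the residual equation for the error against $\bw_K$, integrating by parts (no boundary terms since $b_K$ vanishes on $\partial K$), and inserting $\pm 2\mu\beps(\bu)$, $\pm\mu_h$ to account for the coefficient perturbation, yields $\eta_{R_K}\lesssim \|(2\mu_h)^{1/2}\beps_h(\bu-\bu_h)\|_{0,\omega_K} + \|(\cdots)^{-1/2}[\nabla\cdot(\bu-\bu_h)+\tfrac1\lambda(p-p_h)]\|_{0,\omega_K} + \Theta_K + \text{h.o.t.}$, where the higher-order term absorbs $\|h_K(2\mu_h)^{-1/2}(\kappa\bu-\kappa_h\bu_h)\|_{0,K}$ and data oscillation of $\kappa_h\bu_h$. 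For the second (divergence) part of $\eta_{R_K}$ no bubble is needed: since $\nabla\cdot\bu + \tfrac1\lambda p = 0$ exactly, $\nabla\cdot\bu_h + \tfrac1\lambda p_h = \nabla\cdot(\bu_h-\bu) + \tfrac1\lambda(p_h-p)$ pointwise, and the weight $((2\mu_h)^{-1}+\lambda^{-1})^{-1/2}$ is controlled directly.

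Next, for the flux-jump estimator $\eta_{\cF_K}$, I would use an edge/face bubble $b_\cF$ supported on the patch $\omega_\cF = K_1\cup K_2$ (or the single element for $\cF\in\cF_h^N$), lift the polynomial jump $\jump{(p_h\bI - 2\mu_h\beps_h(\bu_h))\bn}$ into $\omega_\cF$, multiply by $b_\cF$, and test the elementwise residual equations over $\omega_\cF$ against this lifting. The standard scaled trace and inverse inequalities give $\|h_\cF^{1/2}(2\mu_h)^{-1/2}\jump{(p_h\bI-2\mu_h\beps_h(\bu_h))\bn}\|_{0,\cF} \lesssim \|(2\mu_h)^{1/2}\beps_h(\bu-\bu_h)\|_{0,\omega_\cF} + (\text{already-bounded }\eta_{R})$-contributions on $\omega_\cF$ $+\,\Theta_{\omega_\cF} + \text{h.o.t.}$; on Neumann faces the exact boundary condition $(2\mu\beps(\bu)-p\bI)\bn = 0$ replaces the jump by the error itself. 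Summing over $K$, the finite overlap of the patches $\omega_K,\omega_\cF$ (shape regularity) converts local patch norms into the global energy norm $\vertiii{(\bu-\bu_h^c,p-p_h)}_{\H(h)\times Q}$. Here one must also recall $\bu_h = \bu_h^c + \bu_h^r$ and use \eqref{realilem1} to trade $\bu_h$ for $\bu_h^c$ where needed, so that the jump contributions of $\bu_h^r$ are themselves bounded by $\eta_{J_K}$, hence by the error.

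The jump estimator $\eta_{J_K}$ is the easy case: since $\bu\in\H$ and $\bu|_{\Gamma_D}=\boldsymbol 0$, the jumps (and Dirichlet traces) of $\bu$ vanish, so $\jump{\bu_h} = \jump{\bu_h-\bu}$ and $\bu_h\otimes\bn = (\bu_h-\bu)\otimes\bn$ on $\cF_h^D$; thus $\eta_{J_K}\lesssim \|(2\mu_h)^{1/2}\gamma_h^{1/2}\jump{\bu-\bu_h}\|_{0,\partial K\cap\cF_h^*}$, which is literally (a piece of) the DG norm $\vertiH{\bu-\bu_h}\le \vertiH{\bu-\bu_h^c}+\vertiH{\bu_h^r}$, and the last term is again controlled by \eqref{realilem1}; the factor $\gamma_h = \texttt{a}_S/h_\cF$ is exactly the weight in $\|\cdot\|_{\H(h)}$, which is why one needs $\texttt{a}_S\geq 1$ and why the constant ends up independent of $\texttt{a}_S$ (it appears identically on both sides). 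The main obstacle is bookkeeping the coefficient-variation terms uniformly in the Lamé parameters: every integration by parts introduces $2\mu\beps(\bu)$ but the estimator carries $2\mu_h\beps_h(\bu_h)$, and one must systematically insert $\pm 2\mu\beps_h(\bu_h)$ and $\pm 2\mu\beps(\bu)$ so that the difference $2(\mu-\mu_h)\beps_h(\bu_h)$ collects exactly into $\Theta_K$ and the residual of $\kappa\bu$ against $\kappa_h\bu_h$ into the stated higher-order term — all while keeping the weighted norms $\mu^{\pm1/2}$, $\lambda^{-1/2}$ matched so no hidden dependence on $E_{\max}/E_{\min}$ or on $\lambda$ creeps into the constant. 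Since the paper only states the theorem and defers the proof to the bubble-function literature \cite{MR1885308,MR3059294,MR4071826,khan2023finite}, I would present exactly this outline and refer to those sources for the routine local computations.
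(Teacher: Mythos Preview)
Your overall strategy is exactly what the paper indicates: it omits the proof entirely and defers to the standard bubble-function machinery in \cite{MR1885308,MR3059294,MR4071826,khan2023finite}, and your outline (element bubble for $\eta_{R_K}$, direct identity for the divergence residual, edge/face bubble for $\eta_{\cF_K}$) is the conventional argument those references carry out.

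There is, however, a genuine gap in your treatment of $\eta_{J_K}$. You correctly observe $\jump{\bu_h}=\jump{\bu_h-\bu}$ and hence $\eta_{J_K}$ is a piece of $\vertiH{\bu-\bu_h}$; but then you invoke the triangle inequality $\vertiH{\bu-\bu_h}\le\vertiH{\bu-\bu_h^c}+\vertiH{\bu_h^r}$ and appeal to \eqref{realilem1} for the last term. Equation \eqref{realilem1} says $\vertiH{\bu_h^r}\lesssim\bigl(\sum_K\eta_{J_K}^2\bigr)^{1/2}$, i.e.\ it bounds $\bu_h^r$ \emph{by} $\eta_J$, not the other way round, so your chain collapses to $\eta_J\lesssim\vertiH{\bu-\bu_h^c}+C\eta_J$, which is circular. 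The bubble argument delivers efficiency against the full DG error $\vertiii{(\bu-\bu_h,p-p_h)}_{\H(h)\times Q}$ directly (with $\eta_J$ matching the jump part of that norm); passing from $\bu_h$ to $\bu_h^c$ on the right-hand side is not available without circularity, and the statement as written is most naturally read with $\bu_h$ in place of $\bu_h^c$ (consistent with the cited references). Relatedly, your claim that ``$\gamma_h=\texttt{a}_S/h_\cF$ is exactly the weight in $\|\cdot\|_{\H(h)}$'' is off: the norm carries only $h_\cF^{-1}$, so $\eta_J$ differs from the jump part of the error by a factor $\texttt{a}_S^{1/2}$, and the $\texttt{a}_S$-independence claim really rests on the assumption that $\texttt{a}_S$ is fixed of order one, not on a cancellation.
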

\section{Numerical experiments}
\label{sec:numerics}

In this section we report some numerical tests in order to assess the performance of the proposed numerical method in the computation of the eigenvalues of problem \eqref{eq:weak_stokes_system_dg}. These results have been obtained using the FEniCSx project \cite{barrata2023dolfinx,scroggs2022basix}, while the meshes are constructed using GMSH \cite{geuzaine2009gmsh}.The convergence rates of the eigenvalues have been  obtained with a standard least square fitting and highly refined meshes.  

We denote by $N$ the mesh refinement level, whereas $\texttt{dof}$ denotes the number of degrees of freedom.  We denote by $\kappa_{h_i}$ the i-th discrete eigenfrequency, and by $\widehat{\kappa}_{h_i}:=\kappa_{h_i}/(1+\nu)$ the unscaled version of $\kappa_{h_i}$.

Hence, we denote the error on the $i$-th eigenvalue by $\err( \widehat{\kappa}_i)$ with 
$$
	\err(\widehat{\kappa}_i):=\vert \widehat{\kappa}_{h_i}-\widehat{\kappa}_{i}\vert.
$$
We remark that computing $\sqrt{\widehat{\kappa}_{h_i}}$ or $\sqrt{\widehat{\kappa}_{i}}$ gives the corresponding discrete and exact eigenfrequency.

With the aim of assessing the performance of our estimator,  we consider domains   with singularities in two and three dimensions in order to observe the improvement of the convergence rate.  On each adaptive iteration, we use the blue-green marking strategy to refine each $T'\in \CT_{h}$ whose indicator $\eta_{T'}$ satisfies
$$
\eta_{T'}\geq 0.5\max\{\eta_{T}\,:\,T\in\CT_{h} \}.
$$
We define the effectivity indexes with respect to $\eta$ and the eigenvalue $\widehat{\kappa_i}$ by 
$\eff(\widehat{\kappa_i}):=\err(\widehat{\kappa_i})/\eta^2$.

We note that other spectral analysis using DG methods introduce spurious eigenvalues when the stabilization parameter is not correctly chosen (see for instance \cite{MR3962898,MR4077220}). In the first part of this section, we study the safe range where the methods is spurious-free by comparing with references values from the literature. We will analyze the influence stabilization parameter $\texttt{a}_S$.  More precisely, the stabilization parameter $\texttt{a}_S$ in the  bilinear form $a_h(\cdot,\cdot)$ (cf. \eqref{eq:Ah}) will be chosen proportionally to the square of the polynomial degree $k$ as $\texttt{a}_S=\texttt{a} k^2$ with $\texttt{a}>0$. Also, taking into account the theoretical convergence rates presented Theorem \ref{teo:double-order}, the results in Table \ref{table-square2D-SIP} below and the results from the a posteriori analysis, the experiments will be focused on the SIP method in order to observe the recovery of the optimal double convergence rate.

\subsection{Square with bottom clamped boundary condition}\label{subsec:2D_bottomc_square}
We borrow this test from \cite{MR4077220}. The computational  domain is the unit square domain $\Omega:=(0,1)^2$, with $\Gamma_D:=\{(x,y)\in\partial\Omega\,:\, y=0\}$ and $\Gamma_N:=\partial\Omega\backslash\Gamma_D$. The Poisson ratio is taken to be $\nu=0.35$ and $E=1$. The boundary conditions are $\bu= \boldsymbol{0}$ on $\Gamma_D$ and  $\left(2\mu\beps(\bu)-p\boldsymbol{I}\right)\bn=\boldsymbol{0}$ on $\Gamma_N$. The test consider a uniform mesh with $N=8$. The spurious study for this case have been previously studied in \cite{MR4077220} in the context of a mixed discontinuous Galerkin scheme.

In Table \ref{tabla2:square-SIP-k1} we report the results of computing the first 10 lowest eigenvalues for different values of the stabilization parameter and different polynomial orders. The spurious eigenvalues are highlighted in bold numbers. It notes that the higher the penalization value is, the less spurious eigenvalues appear. Also, for $k=3$ the spurious appearance is considerably low.  For the NIP and IIP methods we observed a similar behavior as those of \cite{MR4077220}, so we refer to that study for further details. The experiment ends with Table \ref{table-square2D-SIP}, in which we study the convergence of the first two eigenvalues. Since we have a mixed boundary condition setting, then there is a singularity in the corners where Dirichlet changes to Neumann boundary. This is reflected in the convergence rate, where we observe a $\mathcal{O}(h^r),\, r\geq 1.2$ for all the values of $\nu$ selected. 

\begin{table}[hbt!]
	\centering
	{\setlength{\tabcolsep}{4.pt}\footnotesize
		\caption{Test  \ref{subsec:2D_bottomc_square}. Computed eigenfrequencies $\sqrt{\kappa_{h,i}}$ by using the  SIP method ($\varepsilon=1$) for polynomial degrees $k=1,2,3$ and mesh level $N=8$ on the unit square domain. The Poisson ratio for this case is $\nu=0.35$.}
		\begin{tabular}{|c|c|cccccc|}
			\toprule
			\hline
			$k$&Ref. \cite{MR4077220} &$\texttt{a}=$1/4 & $\texttt{a}=$1/2    & $\texttt{a}=$1    & $\texttt{a}=$2      & $\texttt{a}=$4 & $\texttt{a}=$8 \\ \hline\hline
			\multirow{9}{0.02\linewidth}{1}& 0.6808 & 0.6555 & 0.6612 & 0.6631 & 0.6738 & 0.6809 & 0.6881 \\
			& 1.6993 & $\bf{1.0131}$ & 1.6781 & 1.6914 & $\bf{0.7873}$ & 1.6991 & 1.7065 \\
			& 1.8222 & 1.6719 & 1.7973 & 1.7771 & 1.6887 & 1.8310 & 1.8426 \\
			& 2.9477 & 1.7880 & $\bf{2.0082}$ & $\bf{1.8554}$ & $\bf{1.8162}$ & 2.9702 & 3.0085 \\
			& 3.0181 & $\bf{2.0217}$ & $\bf{2.0735}$ & $\bf{2.7417}$ & 1.8440 & 3.0434 & 3.0720 \\
			& 3.4433 & 2.8190 & 2.8669 & 2.9291 & $\bf{1.9966}$ & 3.4740 & 3.5052 \\
			& 4.1418 & 2.9139 & 2.9480 & 2.9524 & 2.9651 & 4.2061 & 4.2830 \\
			& 4.6312 & $\bf{3.2805}$ & 3.3931  & $\bf{3.3762}$ & 3.0285 & 4.6810 & 4.7681 \\
			& 4.7616 & 3.3557  & 3.8363 & 3.4530 & 3.4592 & 4.8488 & 4.9367 \\
			& 4.7887 & $\bf{3.4927}$ & 4.3341 & $\bf{3.5020}$ & $\bf{3.5864}$ & 4.8720 & 4.9775 \\
			\hline
			\multirow{9}{0.02\linewidth}{2}& 0.6808 & 0.6776 & 0.6768 & 0.6701 & 0.6803 & 0.6816 & 0.6825 \\
			& 1.6993 & 1.6963 & $\bf{0.7360}$ & 1.6839 & 1.6985 & 1.7002 & 1.7013 \\
			& 1.8222 & 1.8213 & 1.6974 & 1.8217 & 1.8225 & 1.8226 & 1.8229 \\
			& 2.9477 & $\bf{2.7325}$ & 1.8208 & $\bf{2.5678}$ & 2.9481 & 2.9485 & 2.9489 \\
			& 3.0181 & 2.9482 & $\bf{2.2148}$ & 2.9309 & 3.0176 & 3.0203 & 3.0225 \\
			& 3.4433 & 3.0332 & 2.9466 & 3.0554 & 3.4435 & 3.4444 & 3.4450 \\
			& 4.1418 & 3.4410 & 3.0229 & $\bf{3.0947}$ & 4.1436 & 4.1459 & 4.1479 \\
			& 4.6312 & $\bf{3.8448}$ & 3.4425 & 3.4309 & 4.6336 & 4.6364 & 4.6395 \\
			& 4.7616 & $\bf{4.0832}$ & 4.1361 & $\bf{3.9605}$ & 4.7635 & 4.7665 & 4.7686 \\
			& 4.7887 & 4.1458 & 4.6241 & 4.1301 & 4.7899 & 4.7954 & 4.7992 \\
			\hline
			\multirow{9}{0.02\linewidth}{3}	& 0.6808 & 0.6760 & 0.6724 & 0.6808 & 0.6808 & 0.6814 & 0.6817 \\
			& 1.6993 & 1.6972 & 1.6929 & 1.6989 & 1.6993 & 1.6999 & 1.7003 \\
			& 1.8222 & 1.8220 & 1.8221 & 1.8222 & 1.8222 & 1.8223 & 1.8224 \\
			& 2.9477 & $\bf{2.0530}$ & 2.9477 & 2.9477 & 2.9477 & 2.9477 & 2.9477 \\
			& 3.0181 & 2.9476 & 2.9990 & 3.0179 & 3.0181 & 3.0191 & 3.0198 \\
			& 3.4433 & 3.0176 & 3.4420 & $\bf{3.2003}$ & 3.4433 & 3.4435 & 3.4436 \\
			& 4.1418 & 3.4427 & 4.1407 & 3.4434 & 4.1419 & 4.1422 & 4.1424 \\
			& 4.6312 & 4.1412 & 4.6258 & 4.1418 & 4.6313 & 4.6320 & 4.6325 \\
			& 4.7616 & 4.6289 & $\bf{4.6365}$ & 4.6312 & 4.7617 & 4.7619 & 4.7619 \\
			& 4.7887 & 4.7612 & 4.7614 & 4.7618 & 4.7887 & 4.7898 & 4.7904 \\
			\hline
			\bottomrule
	\end{tabular}}
	\label{tabla2:square-SIP-k1}
\end{table}

\begin{table}[hbt!]
	\centering 
	{\footnotesize
		\begin{center}
			\caption{Test \ref{subsec:2D_bottomc_square}. Convergence behavior of the first three lowest computed eigenvalues by using the  SIP method ($\varepsilon=1$) on the unit square domain . The stabilization parameter is set to be $\texttt{a} = 10$.}
			\begin{tabular}{|c|c c c c |c| c|}
				\toprule
				\hline
				$\nu$&$N=20$             &  $N=30$         &   $N=40$         & $N=50$ & Order & $\widehat{\kappa}_{\text{extr}}$ \\ 
				\hline
				\hline
				\multicolumn{7}{|c|}{$k=1$}\\
				\hline
				\multirow{2}{0.05\linewidth}{0.35}	&  0.6832  &     0.6821  &     0.6817  &     0.6815  & 1.50 &     0.6809  \\
				&1.7018  &     1.7008  &     1.7003  &     1.7000  & 1.41 &     1.6993  \\
				\hline
				\multirow{2}{0.05\linewidth}{0.49}	 & 0.7033  &     0.7018  &     0.7011  &     0.7008  & 1.27 &     0.6996  \\
				&1.8439  &     1.8412  &     1.8399  &     1.8392  & 1.33 &     1.8373  \\
				\hline
				\multirow{2}{0.05\linewidth}{0.50}	&  0.7056  &     0.7040  &     0.7033  &     0.7029  & 1.26 &     0.7016  \\
				&1.8557  &     1.8528  &     1.8515  &     1.8508  & 1.33 &     1.8487  \\
				\hline
				\bottomrule             
			\end{tabular}
	\end{center}}
	\smallskip
	\label{table-square2D-SIP}
\end{table}


\subsection{Clamped square with different materials} \label{subsec:square2D_two_domain}
In this test we study the behavior of the numerical scheme when we consider an heterogeneous media along the domain. For this, we consider two different materials on the unit square domain. The boundary conditions for this experiments are
$$
\bu = \boldsymbol{0} \text{ on } \Gamma_D:=(0,y) \cup (1,y), \, y\in[0,1],
$$ 
while $\left(2\mu\beps(\bu)-p\boldsymbol{I}\right)\bn=\boldsymbol{0}$ is assumed on the rest of the boundary.  The computational domain is decomposed such that $\Omega:=\Omega_1\cup\Omega_2$, where $\Omega_1:=(0,0)\times (0,1/2)$ and $\Omega_2:=(0,1/2)\times (1,1)$. 
A sample mesh for this decomposition is portrayed in Figure \ref{fig:square2D-sample-mesh}. The Young modulus and density parameters are then given by
\begin{equation}
	\label{eq:young-modulus}
	E(x):=\left\{
\begin{aligned}
	&7.72\cdot 10^{10} \text{Pa} \text{ in } \Omega_1,\\
	&1.10\cdot 10^{11} \text{Pa} \text{ in } \Omega_2,
\end{aligned},
\right.
\qquad
\rho(x) = \left\{
\begin{aligned}
	&19300\,\text{kg/m}^3 \text{ in } \Omega_1,\\
	&8850\, \text{kg/m}^3  \text{ in } \Omega_2,
\end{aligned}
\right.
\end{equation}
which consist of gold in $\Omega_1$, and copper on $\Omega_2$. By the results from the previous example, it is enough to consider $k=1$ to study the convergence on the a priori case. 

The convergence with uniform meshes is presented in Table \ref{table-square2D_different_materials-SIP}. Here, we observe a clear suboptimal rate of convergence for the four lowest computed eigenvalues. By observing Figure \ref{fig:square2D-sample-mesh} we note that the heterogeneity of the medium yields to a considerable difference in the displacement field for the first eigenmode with $\nu=0.35$ and $\nu=0.5$.  Also, the pressure contour shows, along with the corner singularities, two zones of high gradients near the intersection of the material discontinuity and the boundary domain.

When studying the a posteriori refinements for this case, we consider $k=1,2$ and a total of $15$ adaptive iterations. Figure \ref{fig:lshape-2D-adaptive} depicts several intermediate meshes for the chosen polynomials degrees.  We note that the refinements concentrate along the singularities  and the number of marked elements is lower for the higher degree. The adaptive refinement also makes evident the difference between the materials on the domain. Those results are followed by the error curves, presented in Figure \ref{fig:lshape2D-error}, where we observe an experimental rate of $\mathcal{O}(h^{k+1})$. The reliability and the efficiency of the estimator are evidenced on Figure \ref{fig:lshape2D-effectivity}, where a proper boundedness of $\texttt{eff}(\cdot)$ and a decay behaving like $\mathcal{O}(h^{k+1})$ for $\eta^2$ are observed.
\begin{figure}[hbt!]
	\centering
		\begin{minipage}{0.24\linewidth}\centering
	\includegraphics[scale=0.1, trim=22cm 3cm 22cm 5cm,clip]{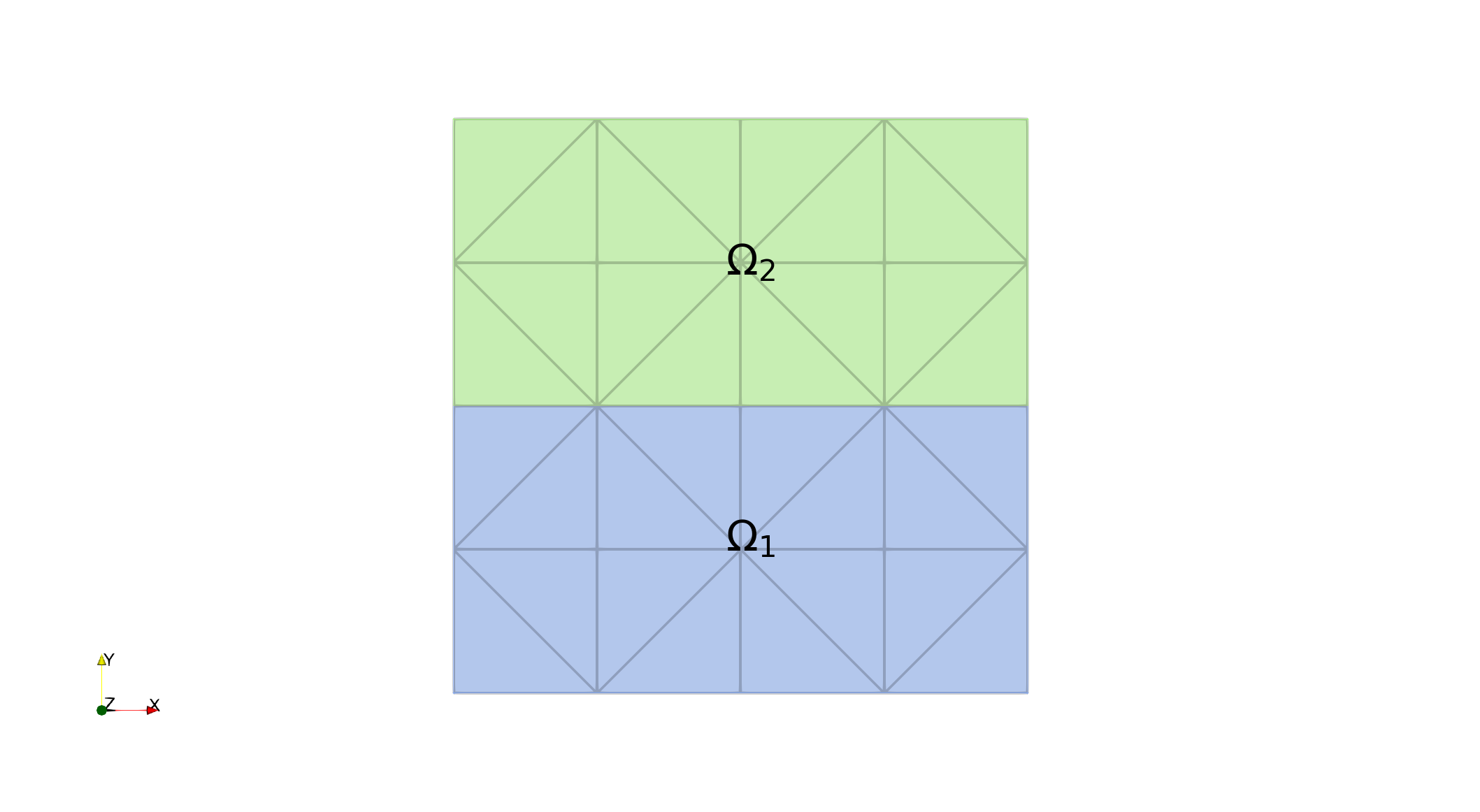}
\end{minipage}
		\begin{minipage}{0.24\linewidth}\centering
	\includegraphics[scale=0.1, trim=22cm 3cm 22cm 5cm,clip]{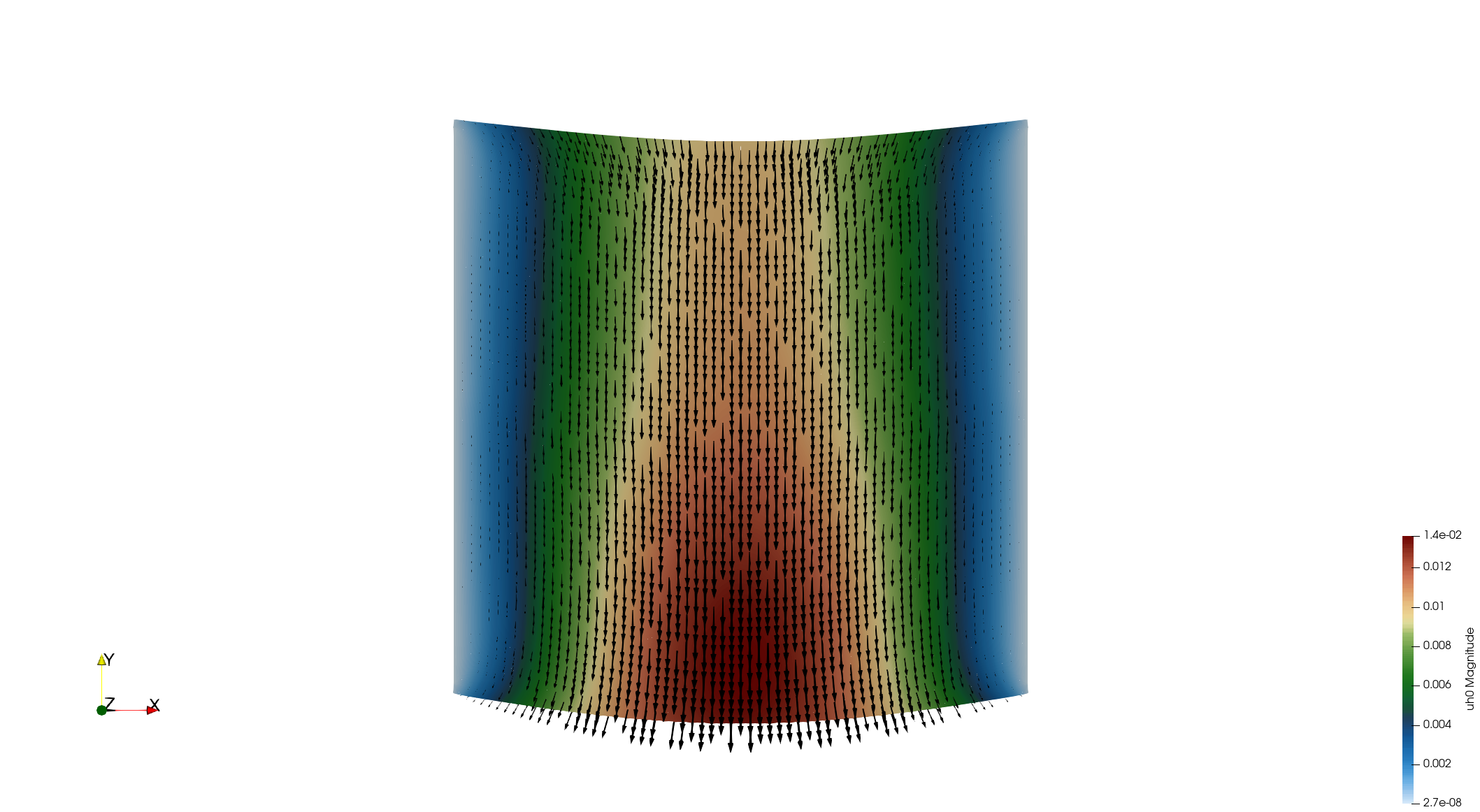}
\end{minipage}
		\begin{minipage}{0.24\linewidth}\centering
	\includegraphics[scale=0.1, trim=22cm 3cm 22cm 5cm,clip]{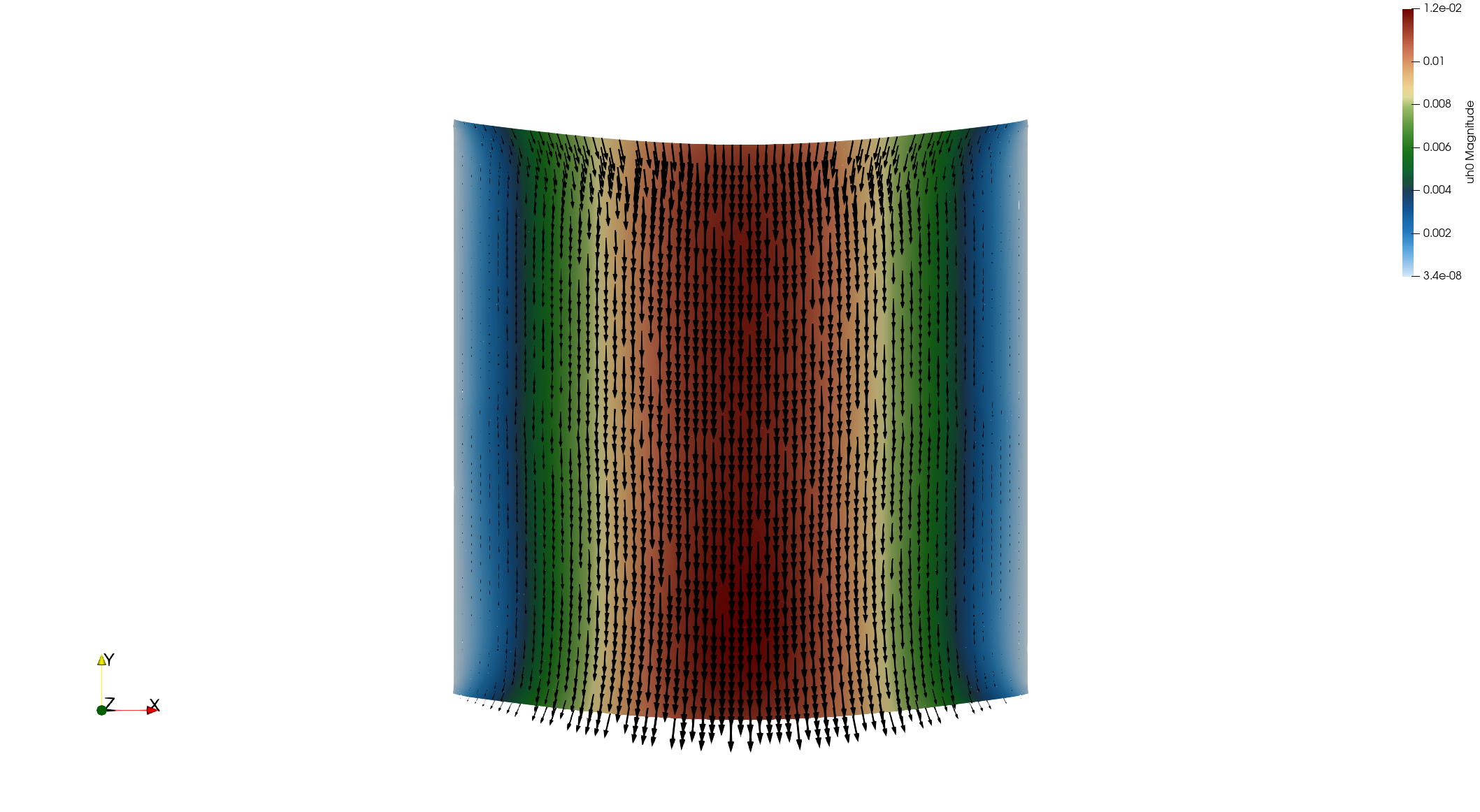}
\end{minipage}
		\begin{minipage}{0.24\linewidth}\centering
	\includegraphics[scale=0.1, trim=22cm 3cm 22cm 5cm,clip]{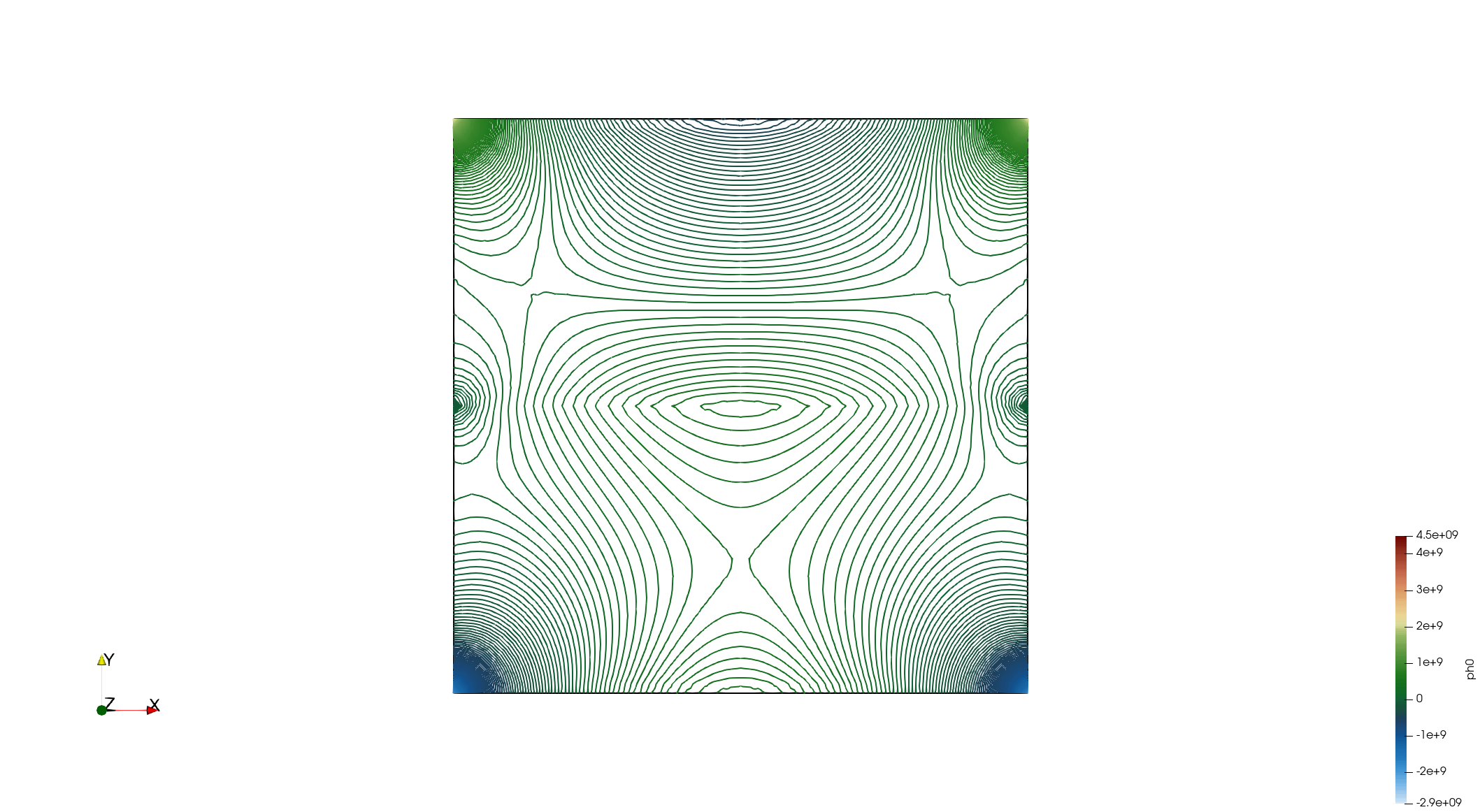}
\end{minipage}
	\caption{Test \ref{subsec:square2D_two_domain}. Left: Sample mesh for the subdivided computational domain with two materials. The next two figures correspond to the warped domain using the lowest mode $\bu_{1,h}$ for $\nu=0.35$ and $\nu=0.5$, respectively. The right figure correspond to the first pressure mode, which is similar for $\nu=0.35$ and $\nu=0.5$. }
	\label{fig:square2D-sample-mesh}
\end{figure}
\begin{table}[hbt!]
	\centering 
	{\footnotesize
		\begin{center}
			\caption{Test \ref{subsec:square2D_two_domain}. Convergence behavior of the first fourth lowest computed eigenvalues by using the  SIP method ($\varepsilon=1$) for $k=1$ on the unit square domain with different materials. The stabilization parameter is set to be $\texttt{a} = 10$.}
			\begin{tabular}{|c|c c c c |c| c|}
				\toprule
				\hline
				$\nu$&$N=8$             &  $N=16$         &   $N=32$         & $N=64$ & Order & $\widehat{\kappa}_{\text{extr}}$ \\ 
				\hline
				\hline
				\multirow{2}{0.05\linewidth}{0.35}	&   4506.7208  &  4457.1445  &  4439.2239  &  4433.2168  & 1.50 &  4429.6821  \\
				&7546.2296  &  7454.7702  &  7421.5220  &  7410.1874  & 1.49 &  7403.5352  \\
				&8043.1998  &  7864.1790  &  7812.3865  &  7798.2104  & 1.82 &  7792.2188  \\
				&10533.2314  & 10301.4824  & 10223.0867  & 10200.2366  & 1.62 & 10187.2085  \\
				\hline
				\multirow{2}{0.05\linewidth}{0.49}	 &   4462.9780  &  4419.4198  &  4401.2061  &  4394.2151  & 1.30 &  4389.1357  \\
				&7623.5227  &  7454.1769  &  7396.1249  &  7376.9013  & 1.57 &  7366.7357  \\
				&8692.8982  &  8521.8508  &  8468.1473  &  8451.5251  & 1.69 &  8443.9533  \\
				&11173.4976  & 10698.5980  & 10551.1393  & 10507.2968  & 1.74 & 10488.3190  \\
				\hline
				\multirow{2}{0.05\linewidth}{0.50}	&    4464.8940  &  4421.5136  &  4403.1190  &  4395.9793  & 1.28 &  4390.6467  \\
				&7611.7557  &  7442.4339  &  7384.3312  &  7365.0352  & 1.57 &  7354.9169  \\
				&8771.4492  &  8591.9091  &  8535.1211  &  8517.4054  & 1.68 &  8509.2642  \\
				&11166.2453  & 10690.8606  & 10543.1916  & 10499.1463  & 1.73 & 10479.2863  \\
				\hline
				\bottomrule             
			\end{tabular}
	\end{center}}
	\smallskip
	\label{table-square2D_different_materials-SIP}
\end{table}
\begin{figure}[hbt!]
	\centering
	\begin{minipage}{0.24\linewidth}
		\centering
		{\tiny $\nu=0.35, k=1, iter =10$}\\
		\includegraphics[scale=0.08,trim=56cm 2cm 56cm 2cm,clip]{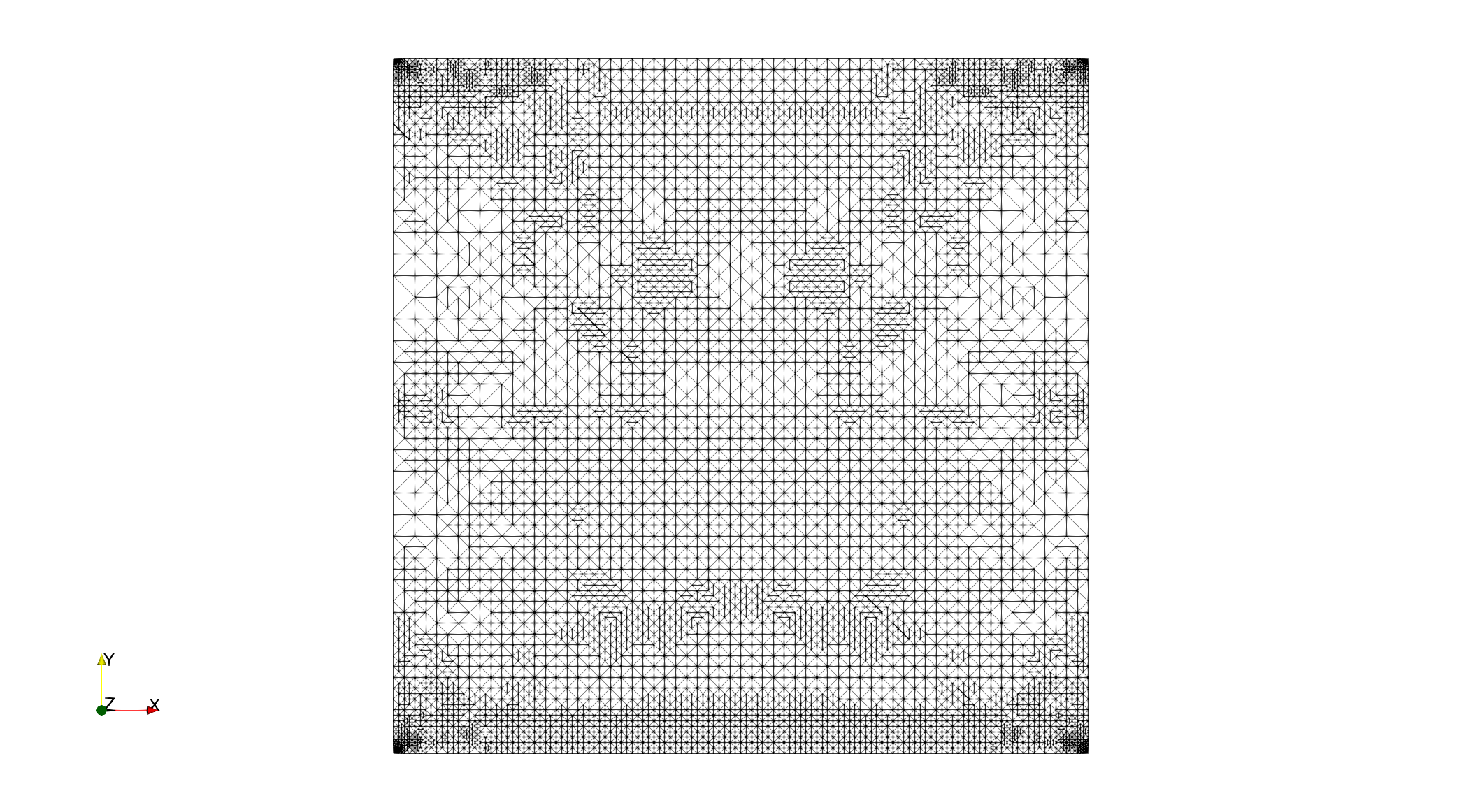}\\
	\end{minipage}
	\begin{minipage}{0.24\linewidth}
		\centering
		{\tiny $\nu=0.35, k=2, iter =10$}\\
		\includegraphics[scale=0.08,trim=56cm 2cm 56cm 2cm,clip]{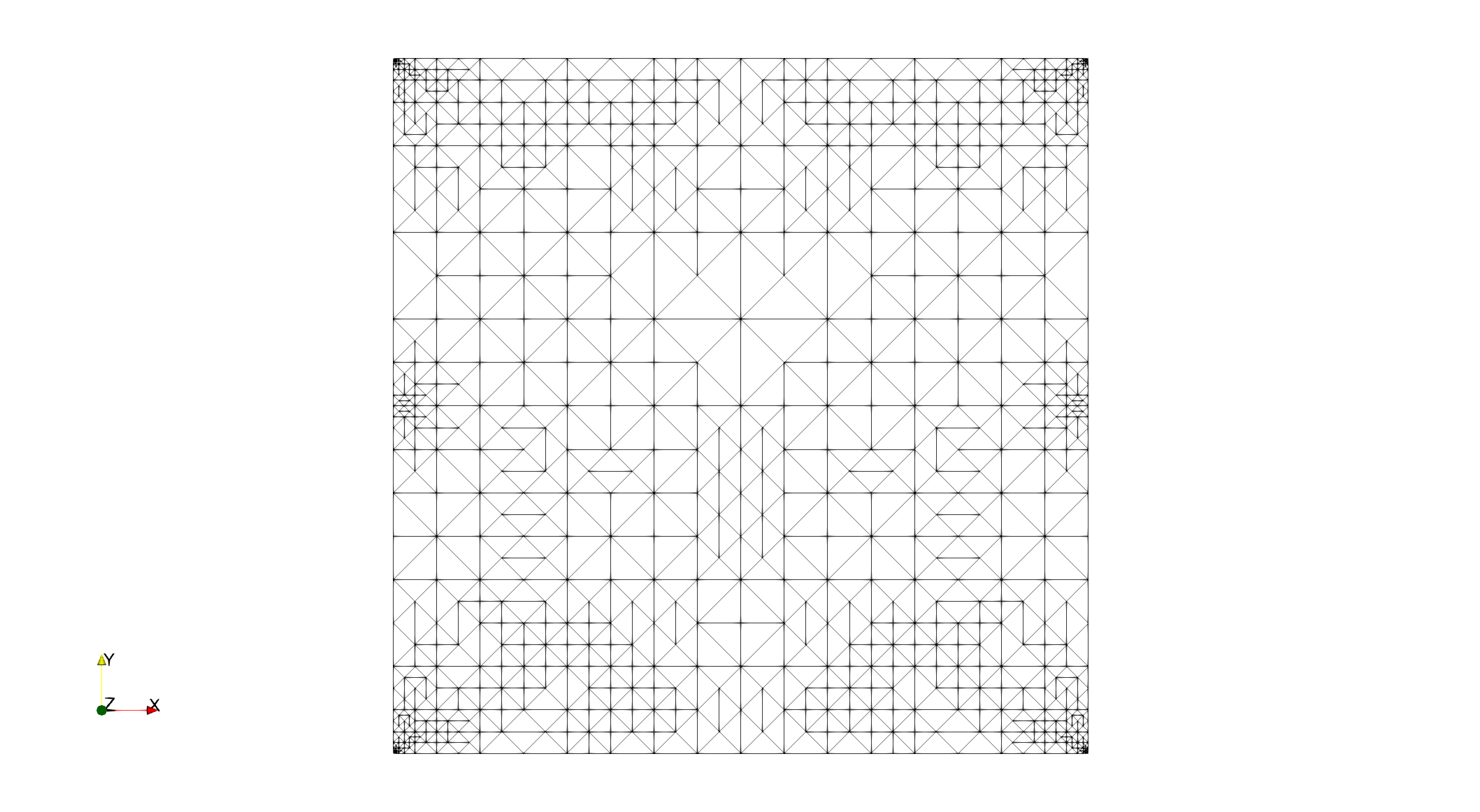}\\
	\end{minipage}
	\begin{minipage}{0.24\linewidth}
		\centering
		{\tiny $\nu=0.35, k=1, iter =15$}\\
		\includegraphics[scale=0.08,trim=56cm 2cm 56cm 2cm,clip]{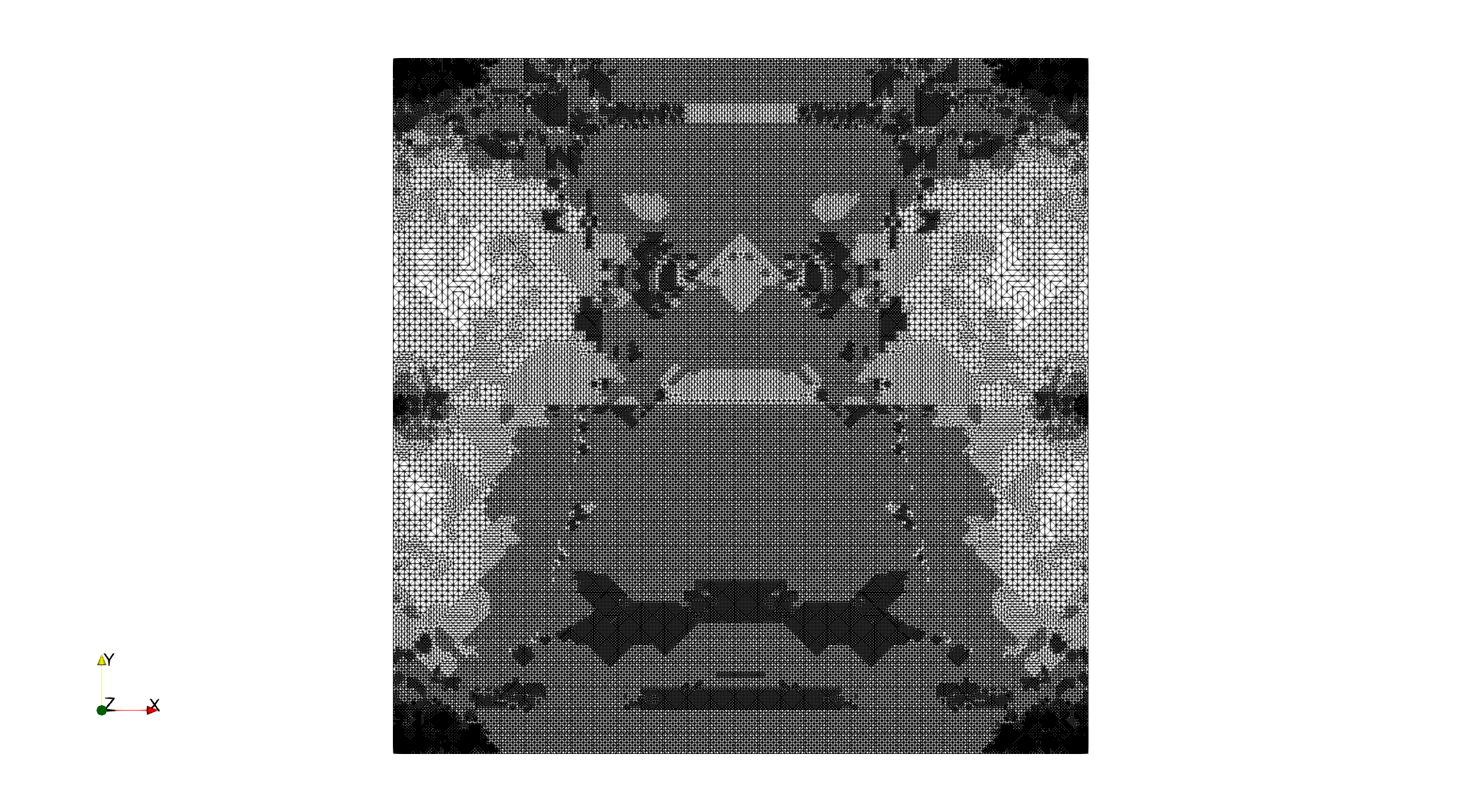}\\
	\end{minipage}
	\begin{minipage}{0.24\linewidth}
		\centering
		{\tiny $\nu=0.35, k=2, iter =15$}\\
		\includegraphics[scale=0.08,trim=56cm 2cm 56cm 2cm,clip]{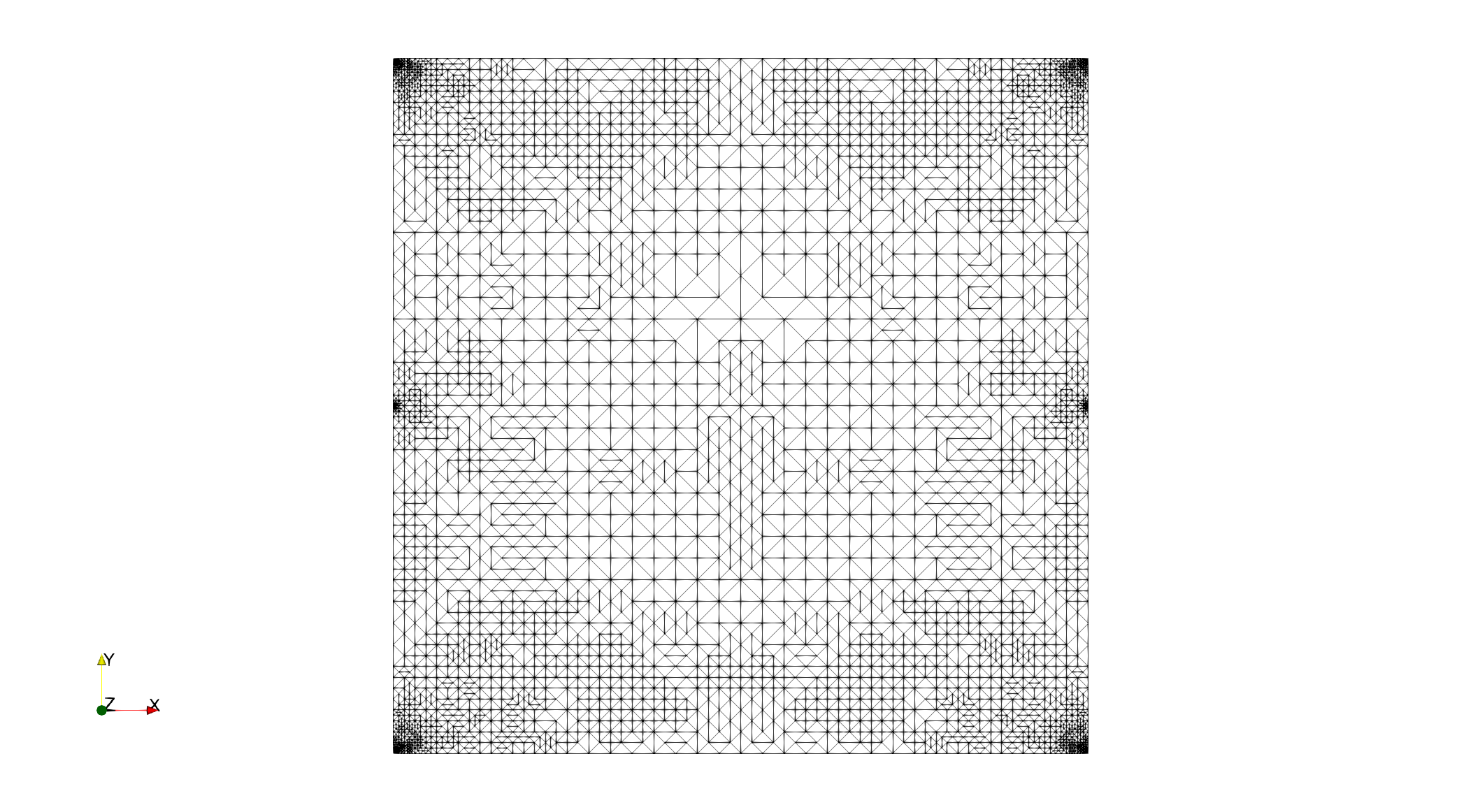}\\
	\end{minipage}\\
	\begin{minipage}{0.24\linewidth}
		\centering
		{\tiny $\nu=0.50, k=1, iter =10$}\\
		\includegraphics[scale=0.08,trim=56cm 2cm 56cm 2cm,clip]{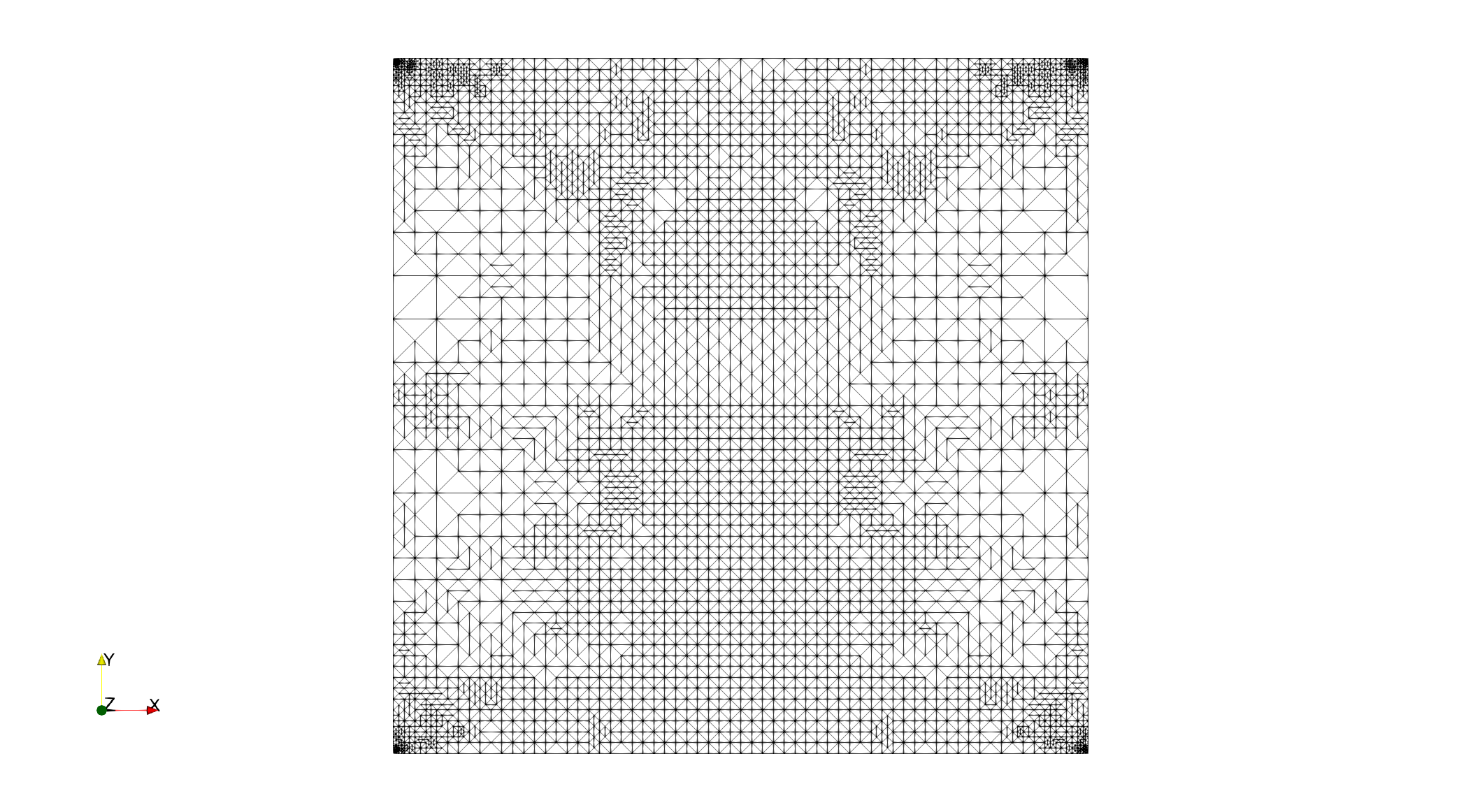}\\
	\end{minipage}
	\begin{minipage}{0.24\linewidth}
		\centering
		{\tiny $\nu=0.50, k=2, iter =10$}\\
		\includegraphics[scale=0.08,trim=56cm 2cm 56cm 2cm,clip]{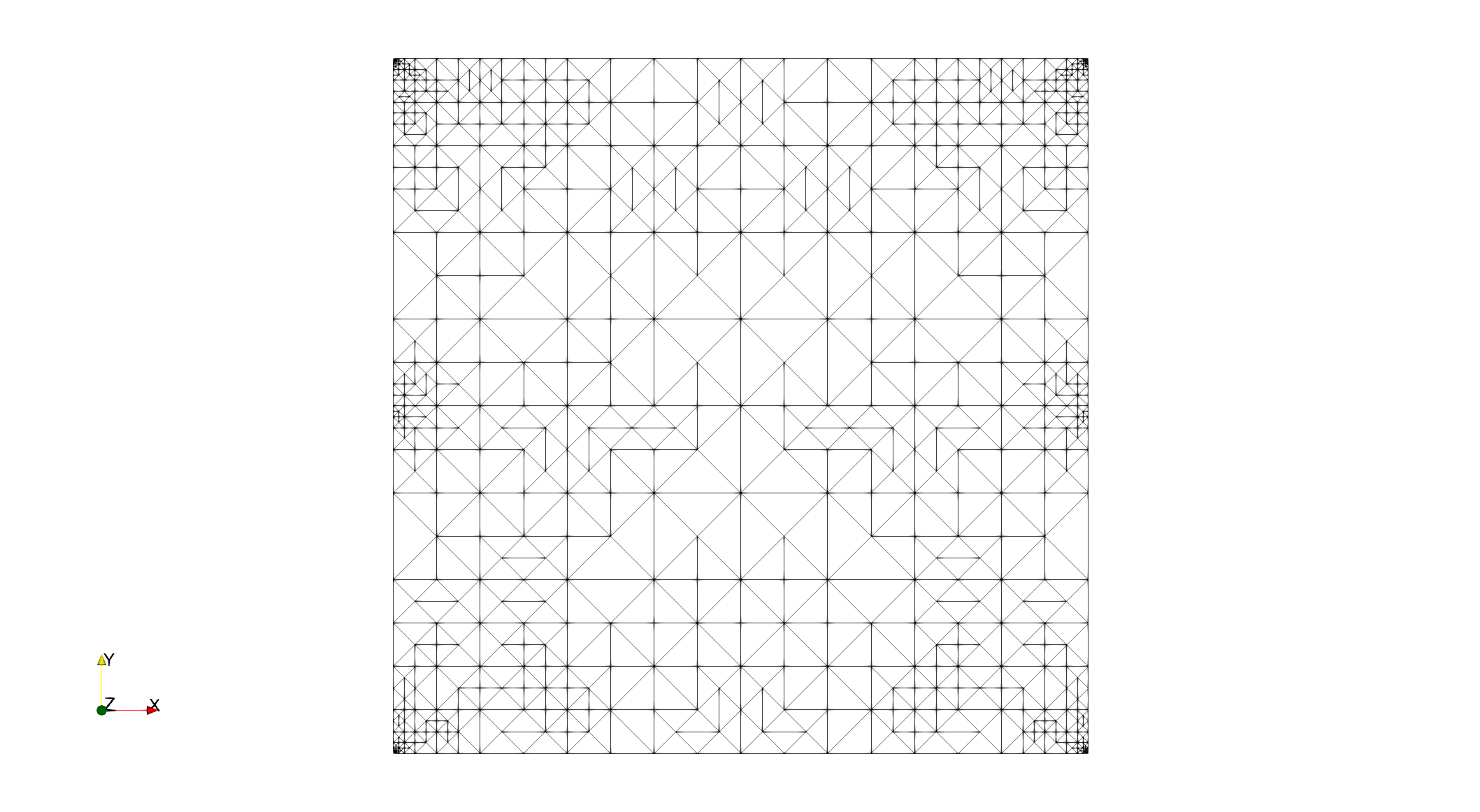}\\
	\end{minipage}
	\begin{minipage}{0.24\linewidth}
		\centering
		{\tiny $\nu=0.50, k=1, iter =15$}\\
		\includegraphics[scale=0.08,trim=56cm 2cm 56cm 2cm,clip]{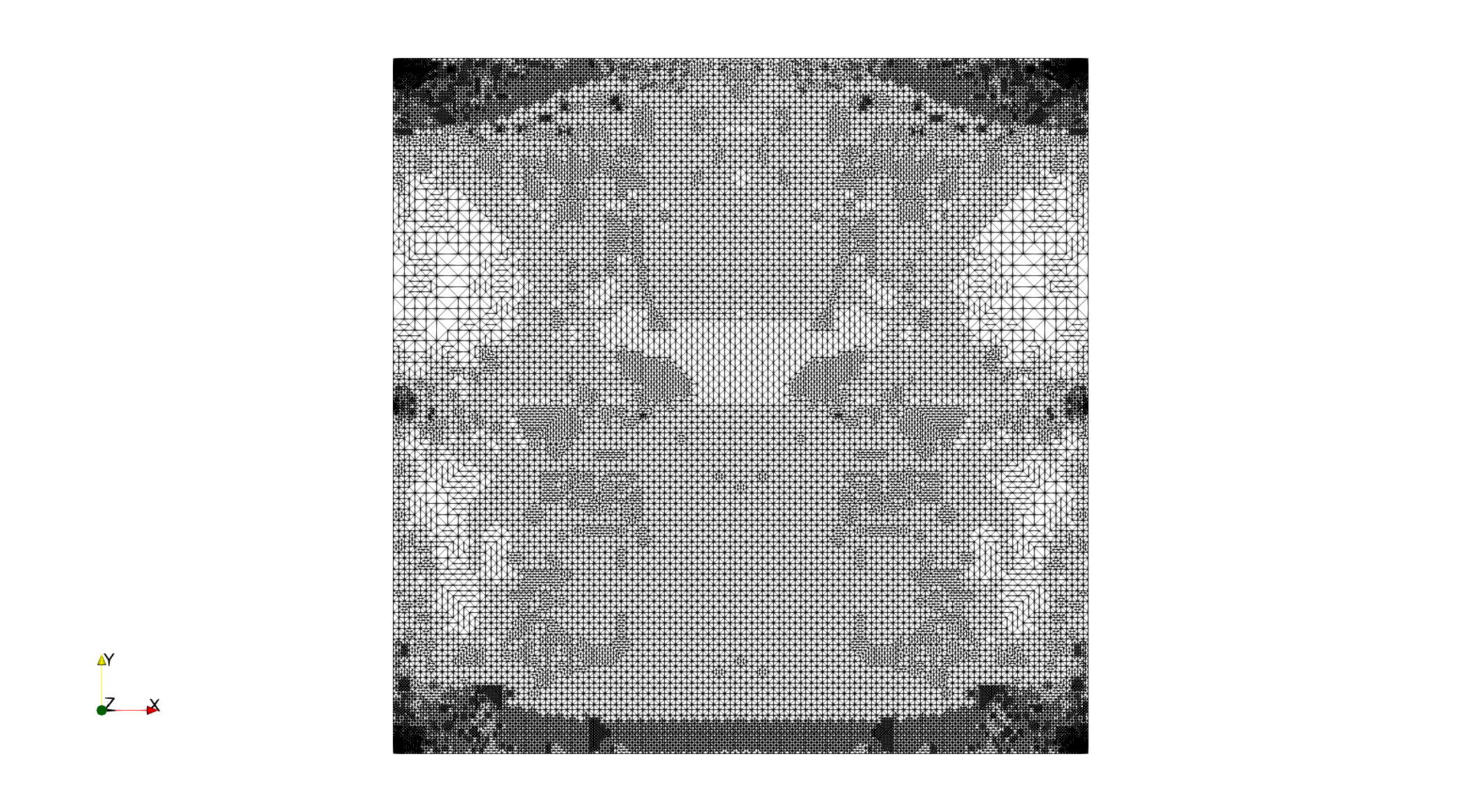}\\
	\end{minipage}
	\begin{minipage}{0.24\linewidth}
		\centering
		{\tiny $\nu=0.50, k=2, iter =15$}\\
		\includegraphics[scale=0.08,trim=56cm 2cm 56cm 2cm,clip]{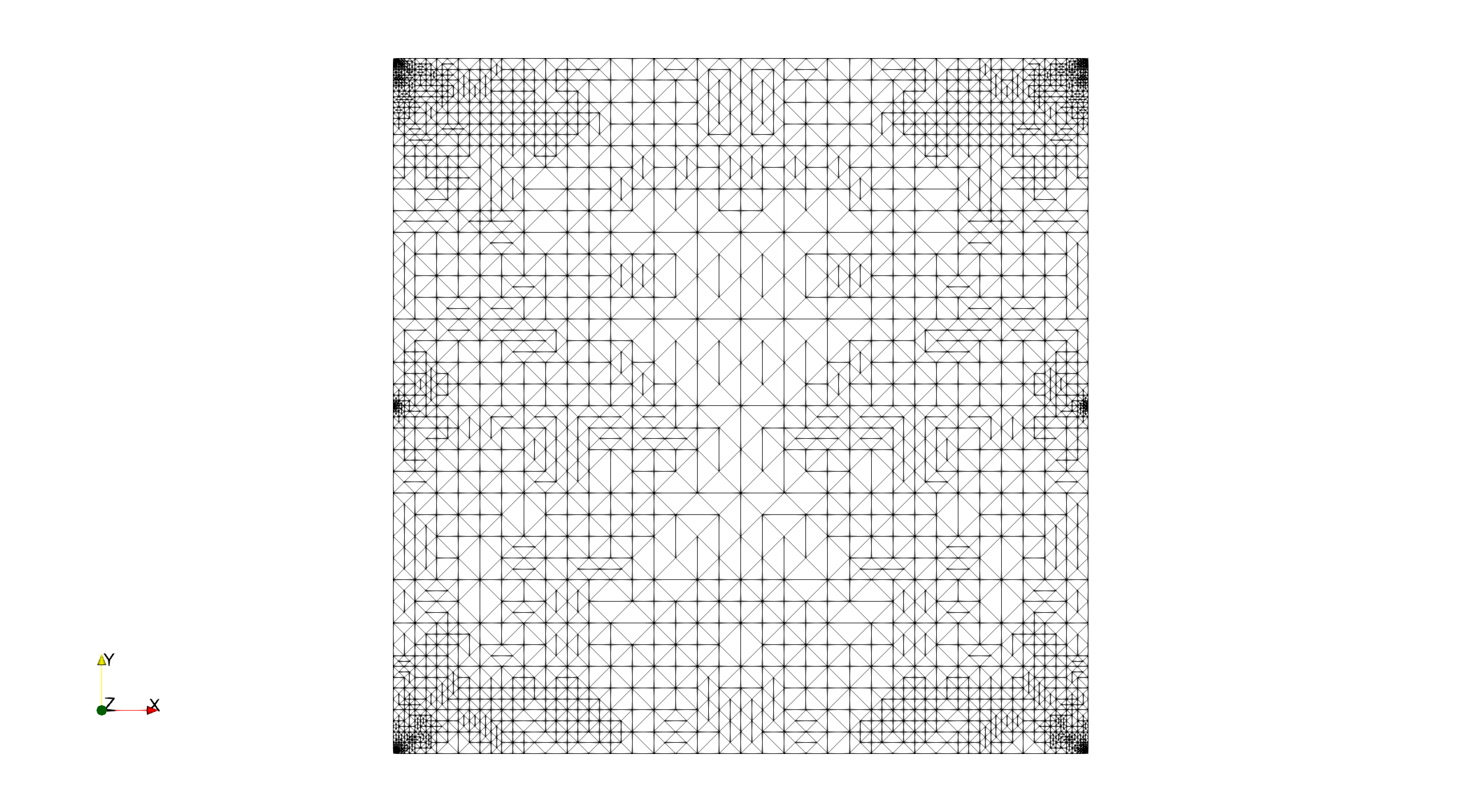}\\
	\end{minipage}
	\\
	\caption{Test \ref{subsec:square2D_two_domain}. Intermediate meshes of the square domain with different materials obtained with the adaptive algorithm and different values of $\nu$, with $k=1,2$. }
	\label{fig:lshape-2D-adaptive}
\end{figure}

\begin{figure}[hbt!]
	\centering
	\includegraphics[scale=0.42]{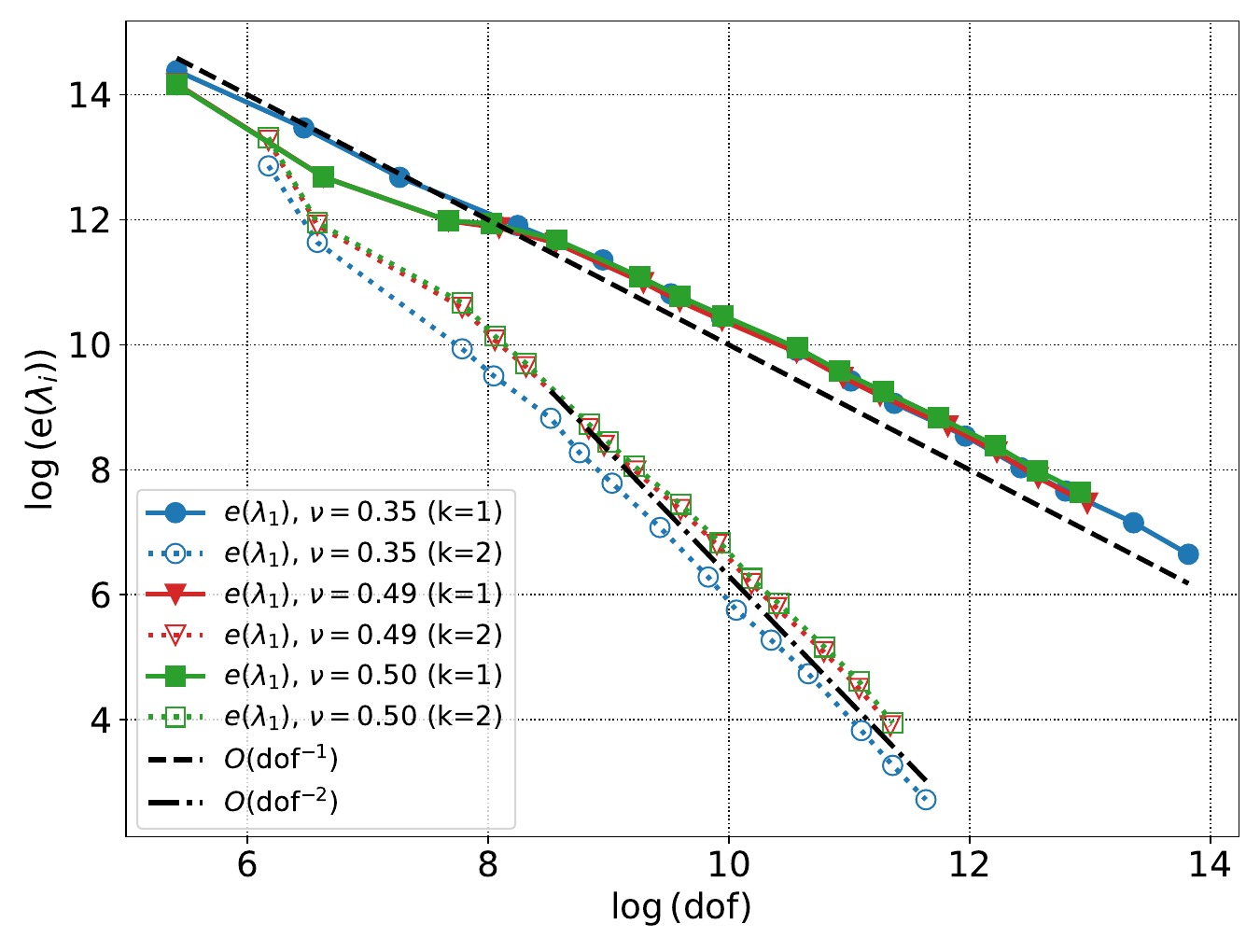}
	\caption{Test \ref{subsec:square2D_two_domain}.  Error curves obtained from the adaptive algorithm in the square domain compared with the lines $\mathcal{O}(\texttt{dof}^{-1})$ and $\mathcal{O}(\texttt{dof}^{-2})$.}
	\label{fig:lshape2D-error}
\end{figure}

\begin{figure}[hbt!]
	\centering
	\begin{minipage}{0.49\linewidth}\centering
		\includegraphics[scale=0.4]{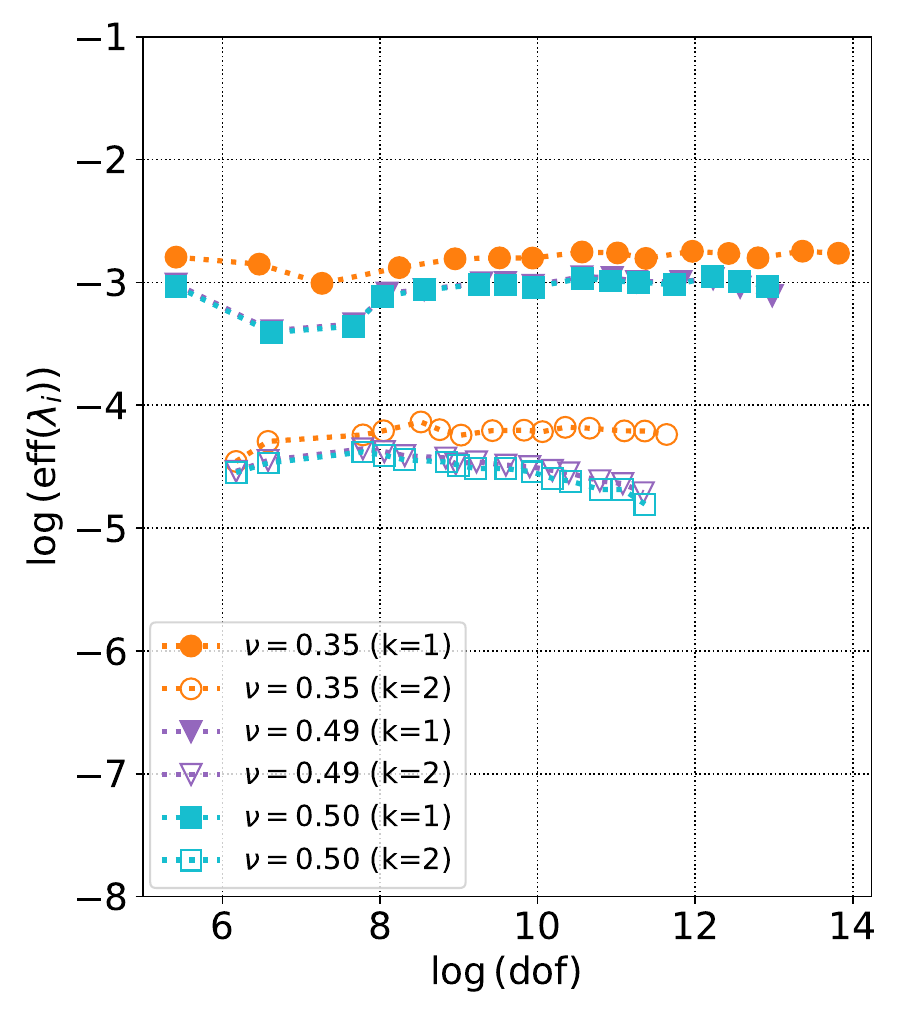}
	\end{minipage}
	\begin{minipage}{0.49\linewidth}\centering
		\includegraphics[scale=0.4]{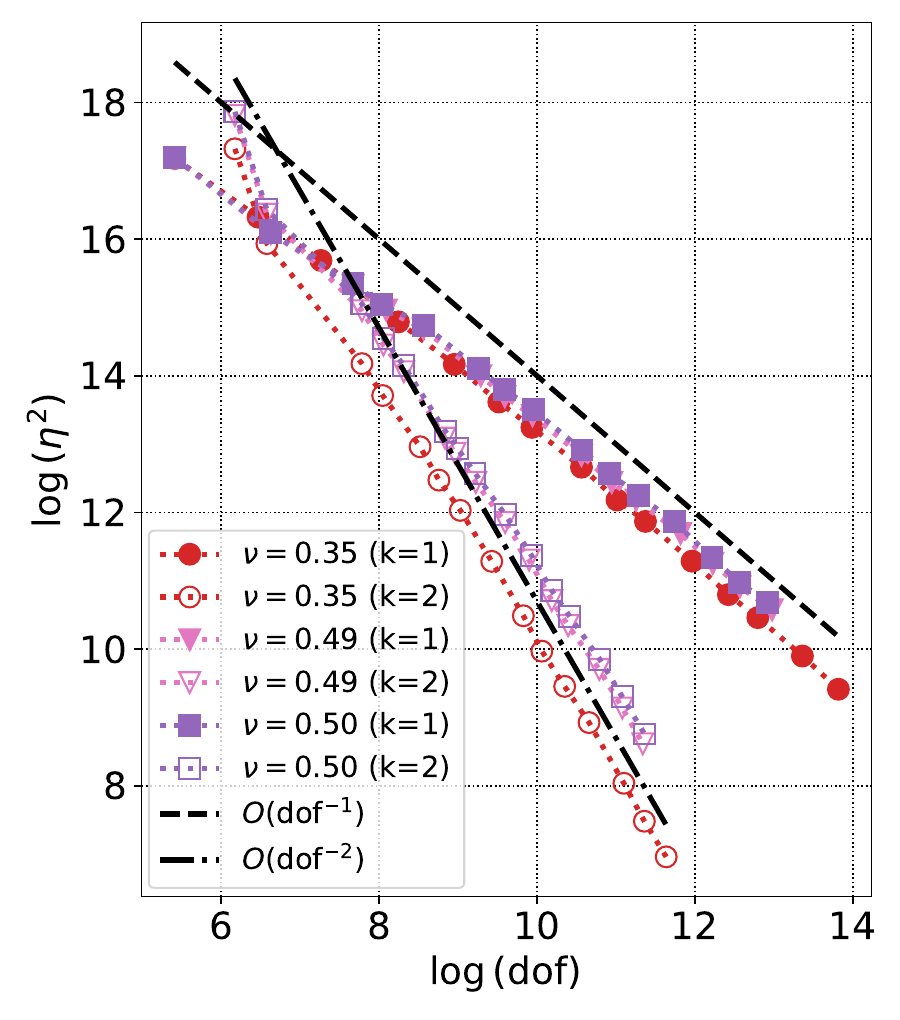}
	\end{minipage}\\
	\caption{Test \ref{subsec:square2D_two_domain}.  Estimator and efficiency curves obtained from the adaptive algorithm in the square domain with different values of $\nu$ and $k=1,2$.}
	\label{fig:lshape2D-effectivity}
\end{figure}

\subsection{The 3D L-shape domain}\label{subsec:Lshape3D-domain}
We now extend the results of our numerical scheme to three dimensions. In this experiment we test the estimator in the case of a domain with a dihedral singularity. The domain under consideration is the  L-shaped domain defined by 
$$
\Omega:=(-1,1)\times(-1,1)\times(-1,0)\backslash\bigg((-1,0)\times(-1,0)\times(-1,0) \bigg).
$$
For simplicity, the lowest order $k=1$ and $8$ adaptive iterations are considered in this experiment. Fully fixed boundary conditions are assumed across $\partial\Omega$. The domain is divided in two subdomains $\Omega_1$ and $\Omega_2$ with different materials, where
$$
\begin{aligned}
&\Omega_1:=(-1,1)\times(-1,1)\times(-1,-0.5)\backslash\bigg((-1,0)\times(-1,0)\times(-1,-0.5) \bigg),\\
&\Omega_2:=(-1,1)\times(-1,1)\times(-0.5,0)\backslash\bigg((-1,0)\times(-1,0)\times(-0.5,0) \bigg).
\end{aligned}
$$
The physical parameters are similar to those of Section \ref{subsec:square2D_two_domain}, namely, the Young modulus and density are taken similar to \eqref{eq:young-modulus}. A sample of the computational domain is depicted in Figure \ref{fig:lshape3D-modes}. Here, the initial mesh is such that $h\approx 1/2$. Because of the singularities presented in this domain, the first eigenvalue is singular for each value of $\nu$. The extrapolated values, computed using highly refined meshes are given in Table \ref{table:extrapolated-lshape3D}.

We present the first eigenmodes on Figure \ref{fig:lshape3D-modes} consisting on the displacement field streamlines. We observe that high values of the displacement magnitude are concentrated on $\Omega_1$ (bottom domain) because of the difference in the density of the materials. Different adaptive meshes for this domain are presented in Figure \ref{fig:lshape3D-meshes}, where we observe that the estimator is capable of detecting the singular behavior of the pressure for each Poisson ratio and refine near that zone. Also, the algorithm marks more elements near the line singularity belonging to $\overline{\Omega}_1$, as expected. 

Let us finish this experiment with Figure \ref{fig:lshape2D-error}. The figure represent the error curves and estimator values obtained with adaptive refinements, where an optimal rate is observed. For the 3D case, a rate of $\mathcal{O}(h^2)\approx \mathcal{O}(\texttt{dof}^{-0.66})$ was expected.
\begin{figure}[hbt!]
	\centering
	\begin{minipage}{0.32\linewidth}\centering
		\includegraphics[scale=0.07,trim=67cm 3cm 67cm 2cm,clip]{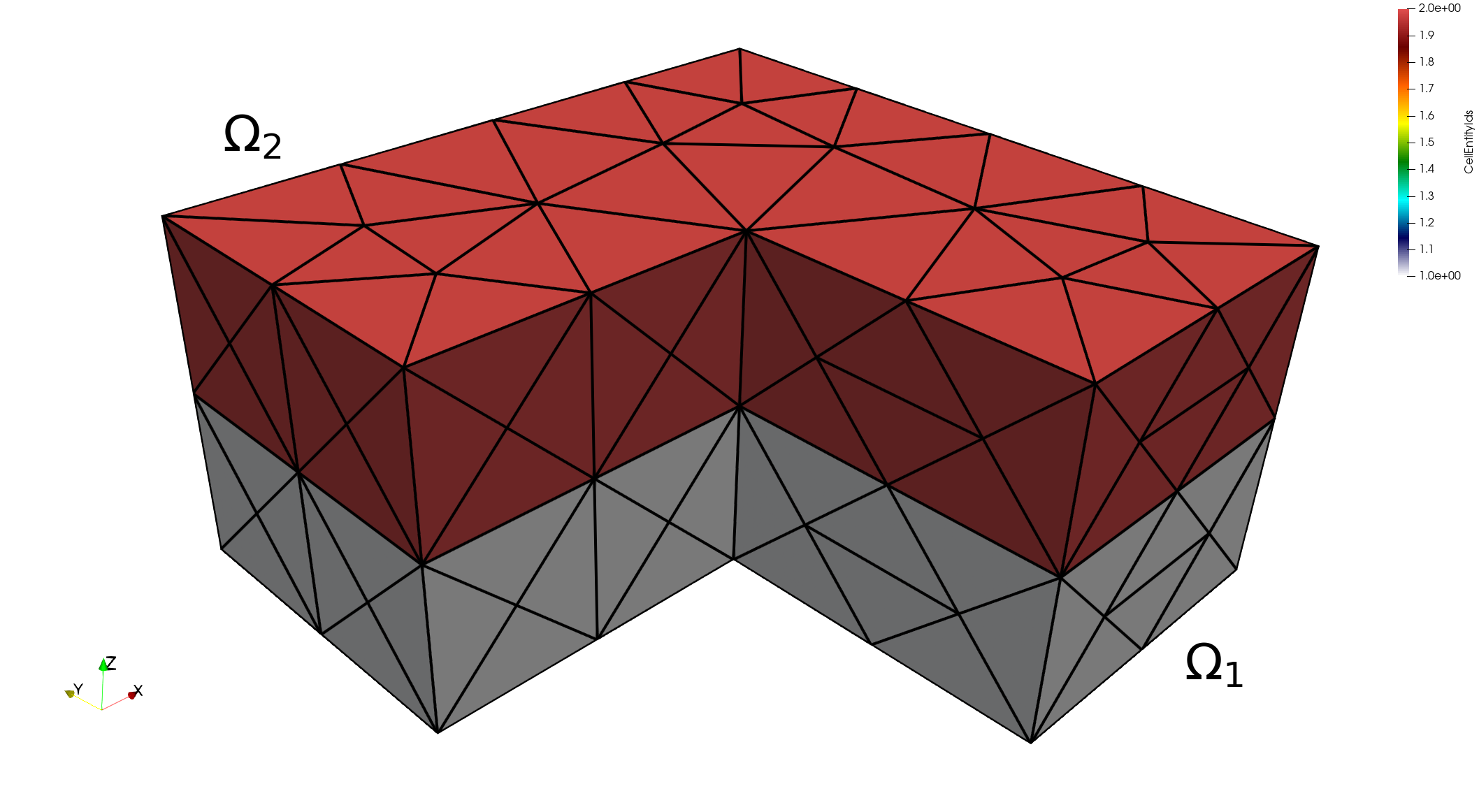}
	\end{minipage}
	\begin{minipage}{0.32\linewidth}\centering
		\includegraphics[scale=0.07,trim=67cm 3cm 67cm 2cm,clip]{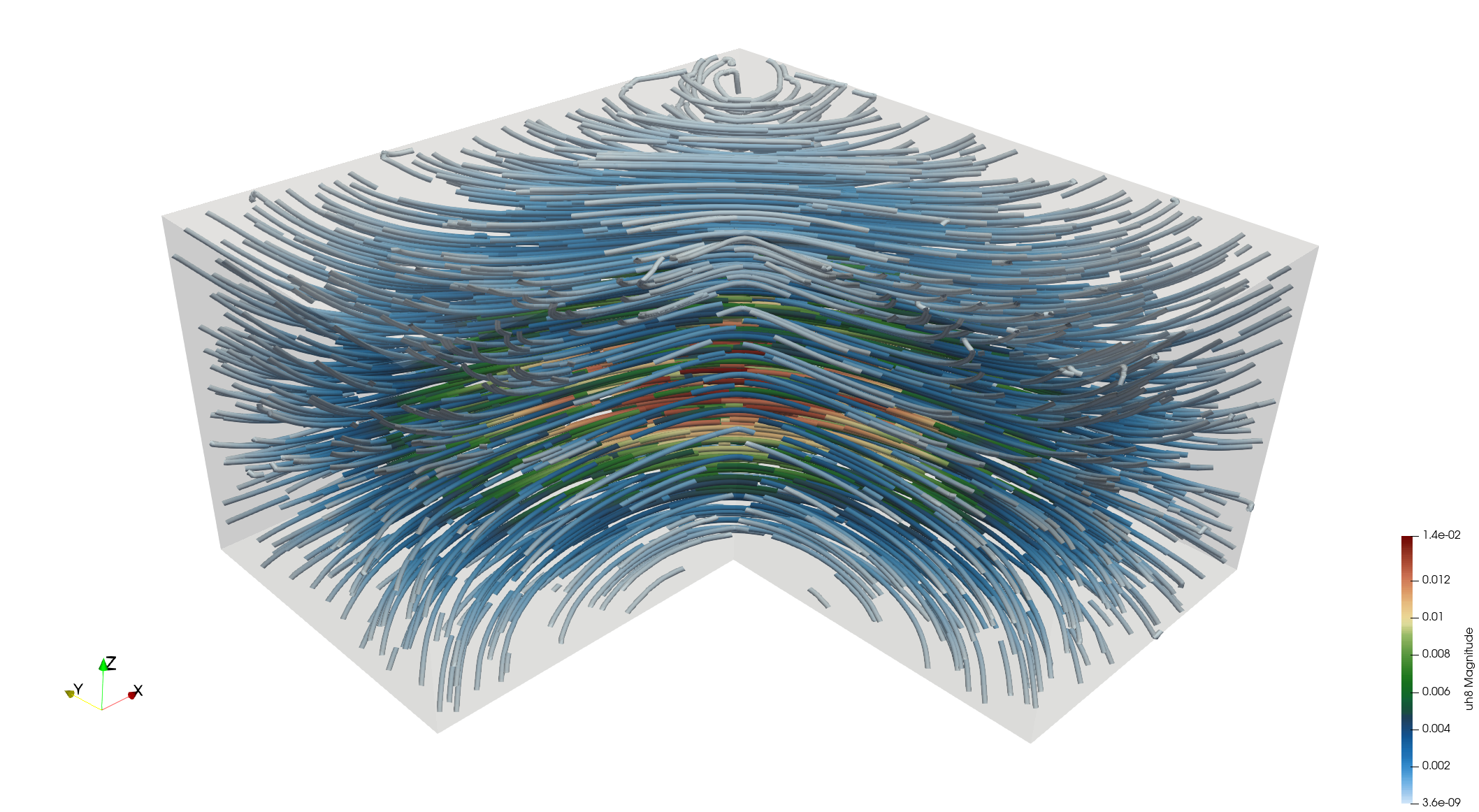}
	\end{minipage}
	\begin{minipage}{0.32\linewidth}\centering
		\includegraphics[scale=0.07,trim=67cm 3cm 67cm 2cm,clip]{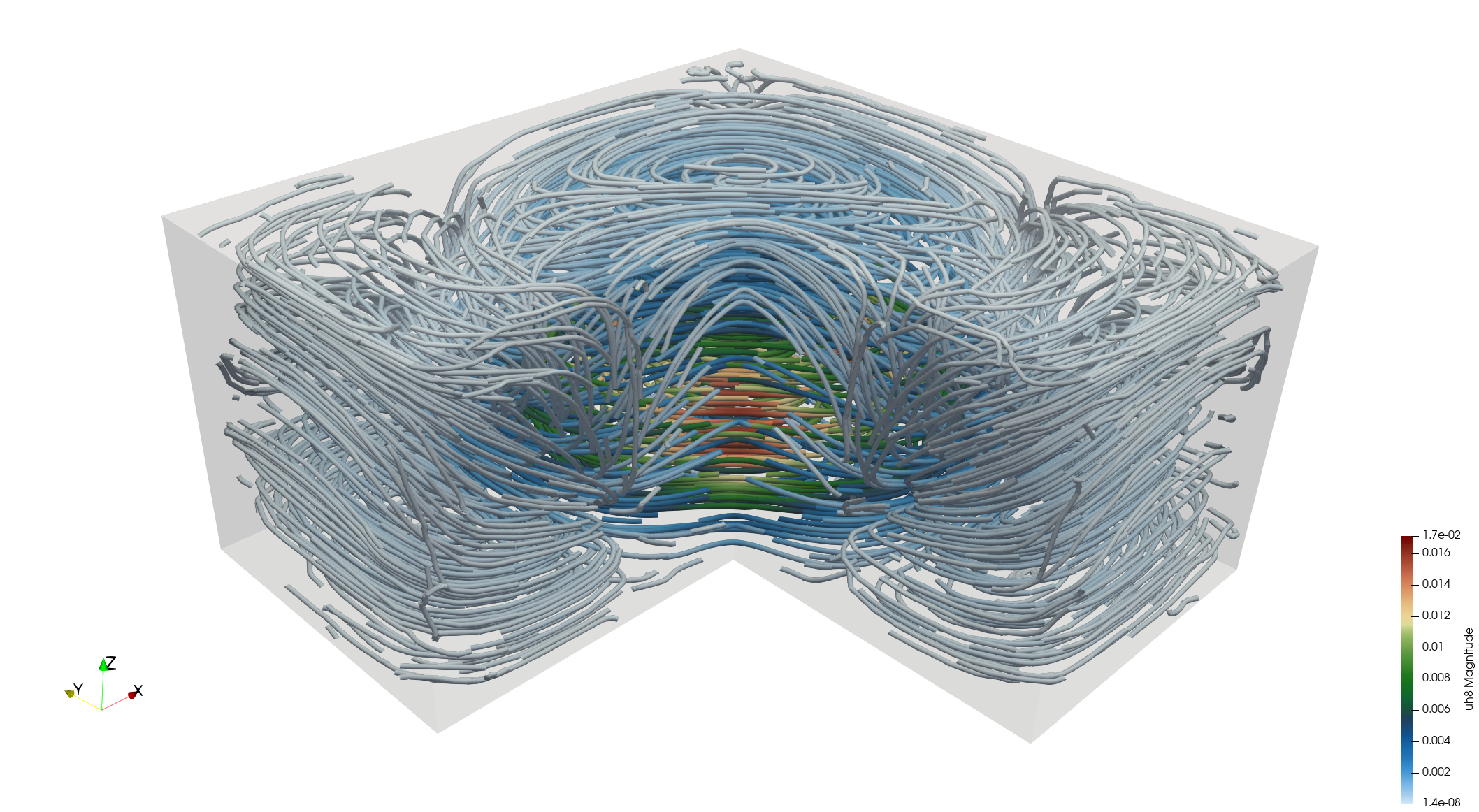}
	\end{minipage}\\
	\caption{Test \ref{subsec:Lshape3D-domain}. Initial mesh for the subdivided computational domain $\Omega:=\Omega_1\cup\Omega_2$ (left) together with a comparison of the first displacement eigenmodes for $\nu=0.35$ (middle) and $\nu=0.5$ (right) at the last adaptive iteration.}
	\label{fig:lshape3D-modes}
\end{figure}
\begin{table}[hbt!]
	{\footnotesize
		\begin{center}
			\caption{Test \ref{subsec:Lshape3D-domain}. Reference lowest computed eigenvalues in the 3D Lshape domain.}
			\begin{tabular}{|c|c|}
			\hline
			\hline
			$\nu$             & $\widehat{\kappa}_1$ \\
			\hline
			0.35  & $7187.76090678178$  \\
			0.49 & $8591.22953946104$   \\
			0.5 &   $8600.14945418864$   \\
			\hline
			\hline
		\end{tabular}
		\end{center}
		}
		\label{table:extrapolated-lshape3D}
\end{table}
\begin{figure}[hbt!]
	\centering
	\begin{minipage}{0.32\linewidth}\centering
		{\tiny $\nu=0.35, iter=5$}\\
		\includegraphics[scale=0.07,trim=67cm 3cm 67cm 2cm,clip]{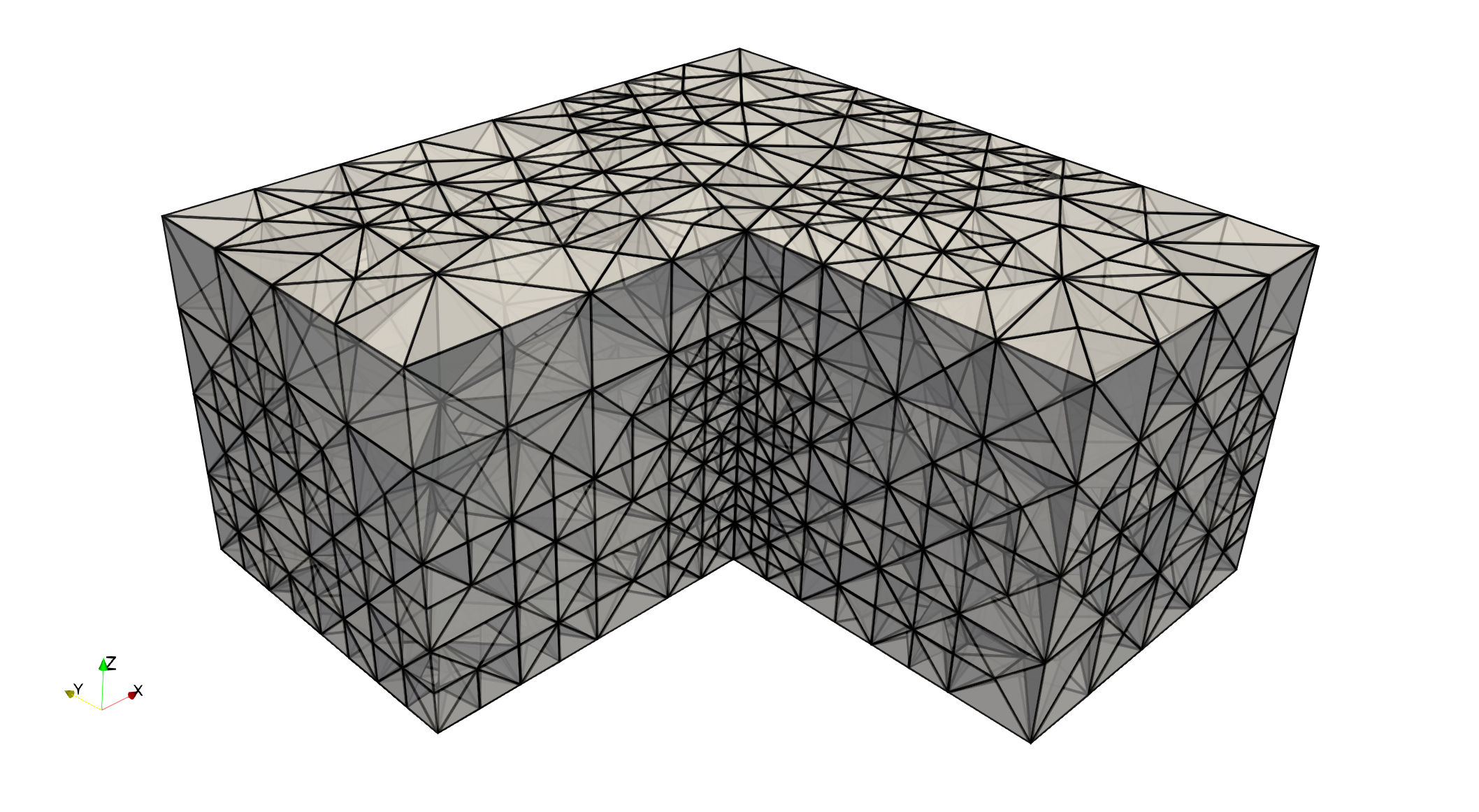}
	\end{minipage}
	\begin{minipage}{0.32\linewidth}\centering
		{\tiny $\nu=0.35,  iter =7$}\\
		\includegraphics[scale=0.07,trim=67cm 3cm 67cm 2cm,clip]{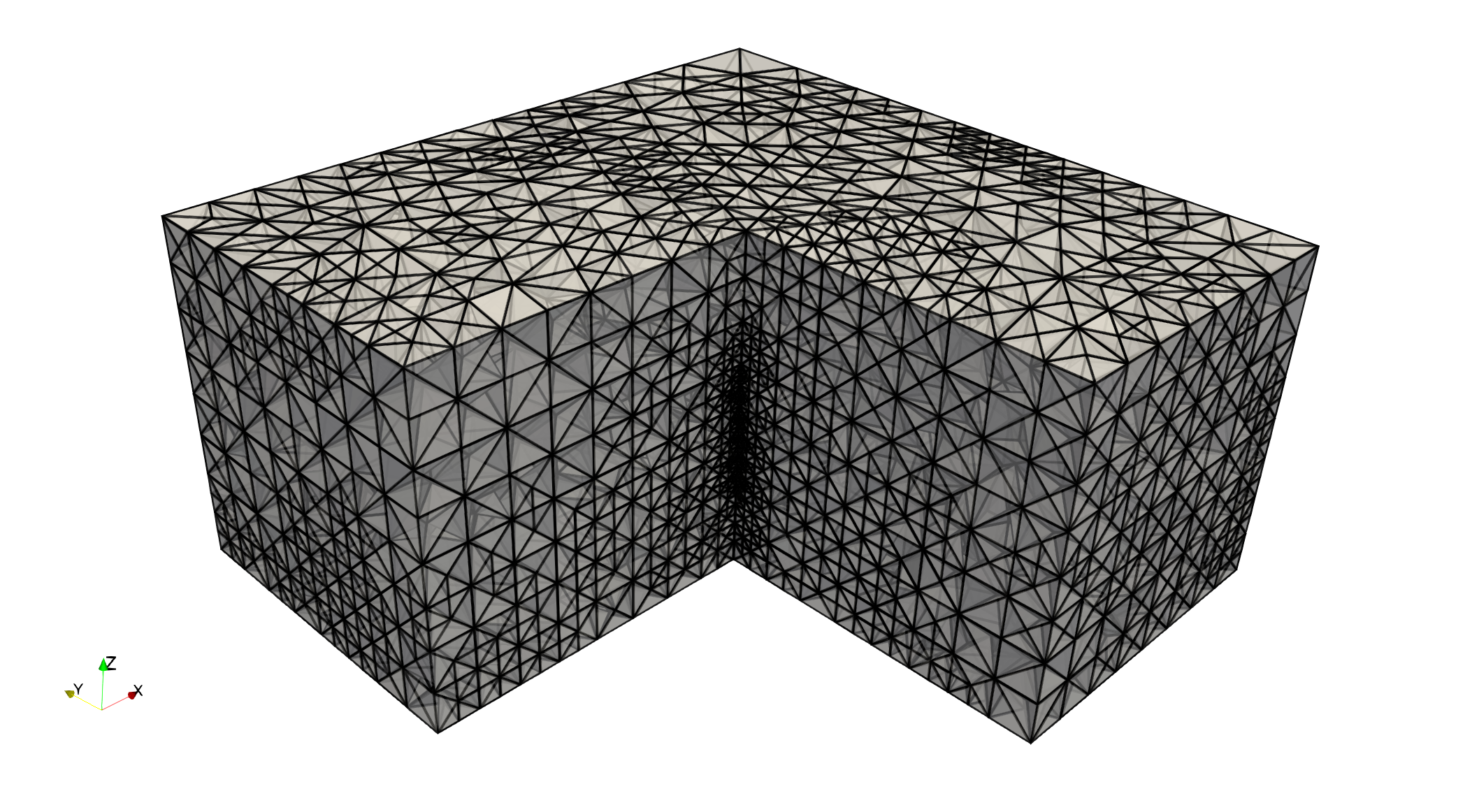}
	\end{minipage}
	\begin{minipage}{0.32\linewidth}\centering
		{\tiny $\nu=0.35,  iter=8$}\\
		\includegraphics[scale=0.07,trim=67cm 3cm 67cm 2cm,clip]{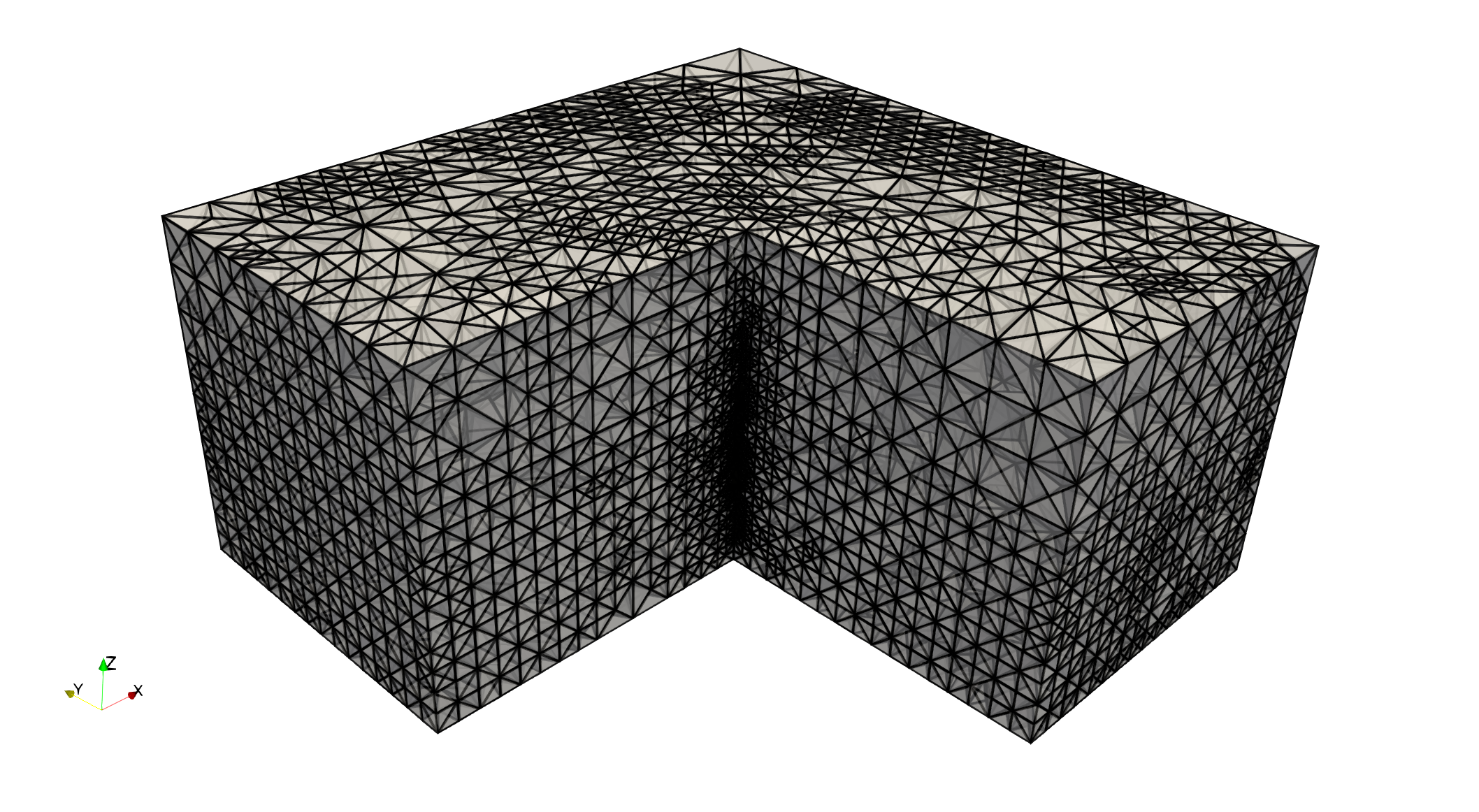}
	\end{minipage}\\
	\begin{minipage}{0.32\linewidth}\centering
		{\tiny $\nu=0.50, iter=5$}\\
		\includegraphics[scale=0.07,trim=67cm 3cm 67cm 2cm,clip]{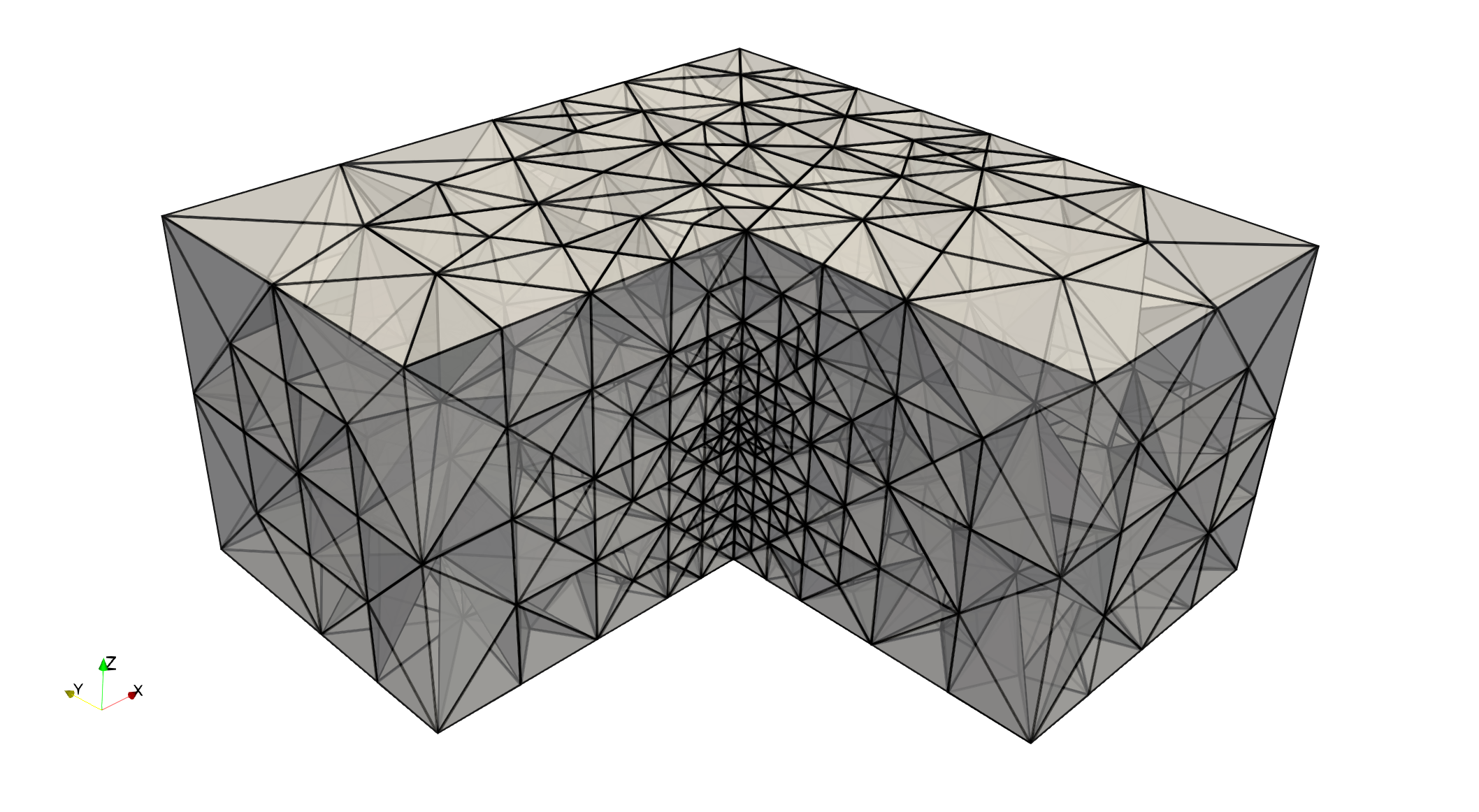}
	\end{minipage}
	\begin{minipage}{0.32\linewidth}\centering
		{\tiny $\nu=0.50, iter =7$}\\
		\includegraphics[scale=0.07,trim=67cm 3cm 67cm 2cm,clip]{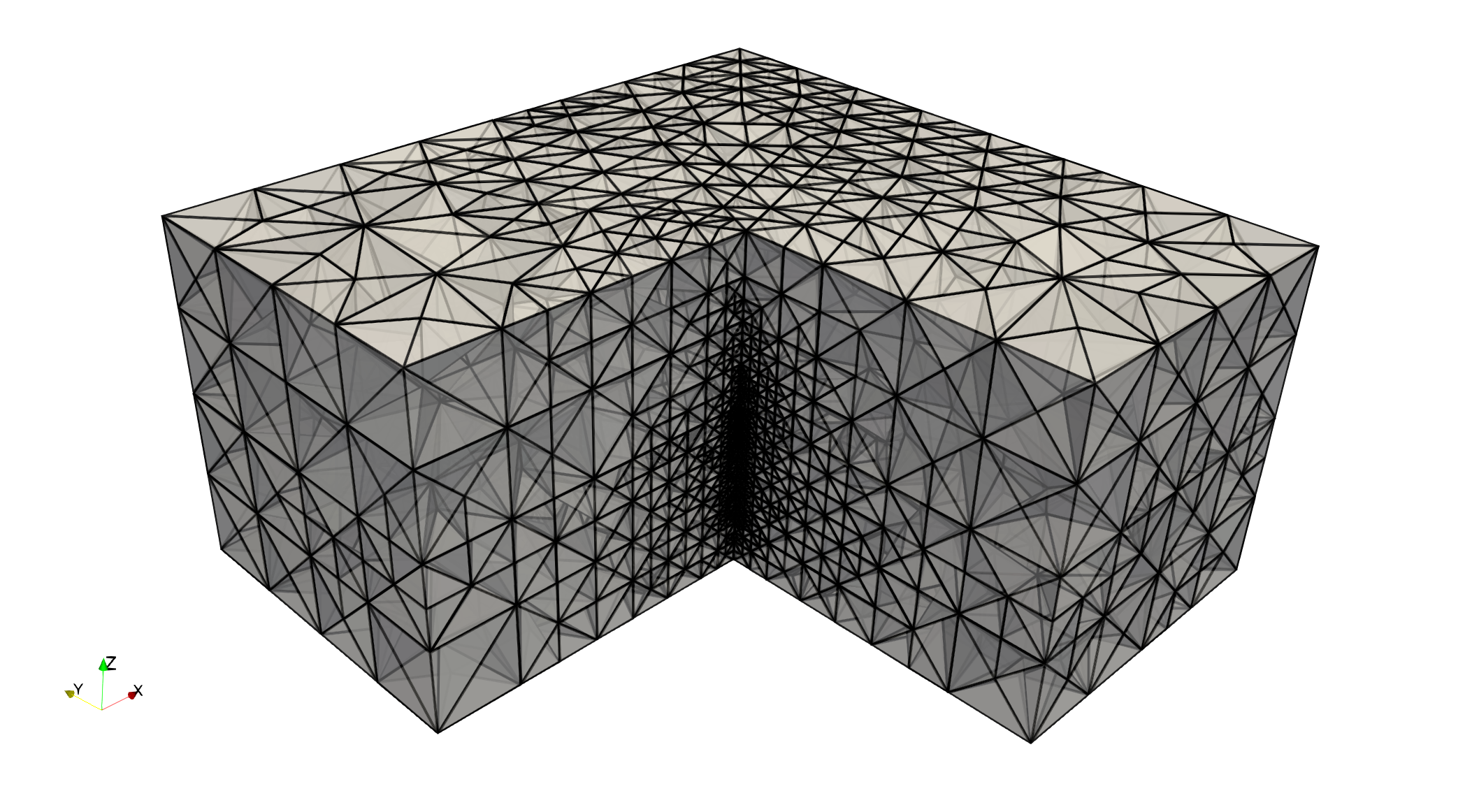}
	\end{minipage}
	\begin{minipage}{0.32\linewidth}\centering
		{\tiny $\nu=0.50, iter=8$}\\
		\includegraphics[scale=0.07,trim=67cm 3cm 67cm 2cm,clip]{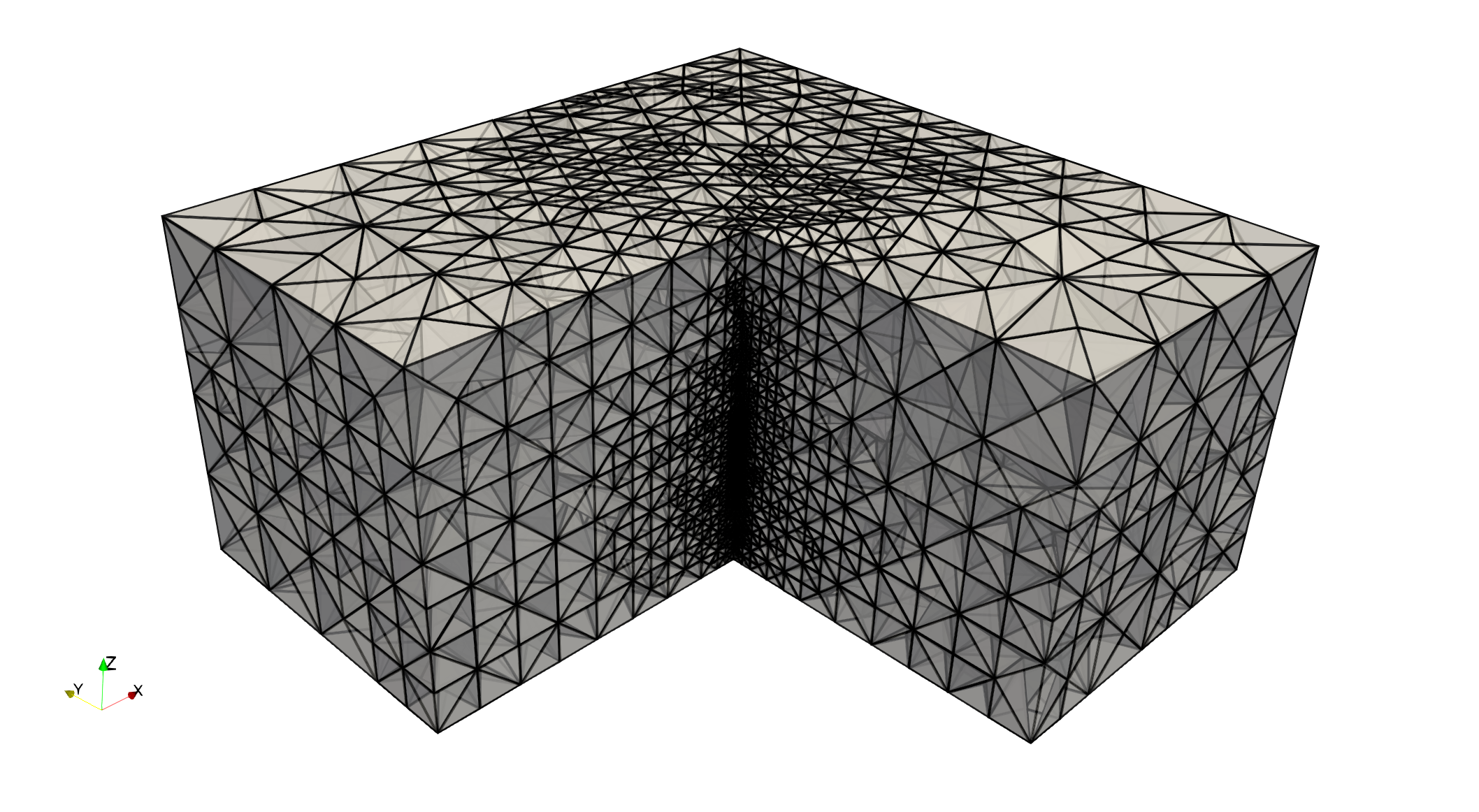}
	\end{minipage}\\
	\caption{Test \ref{subsec:Lshape3D-domain}.  Intermediate meshes of the square domain with heterogeneous media obtained with the adaptive algorithm and different values of $\nu$.}
	\label{fig:lshape3D-meshes}
\end{figure}
\begin{figure}[hbt!]
	\centering
	\includegraphics[scale=0.42]{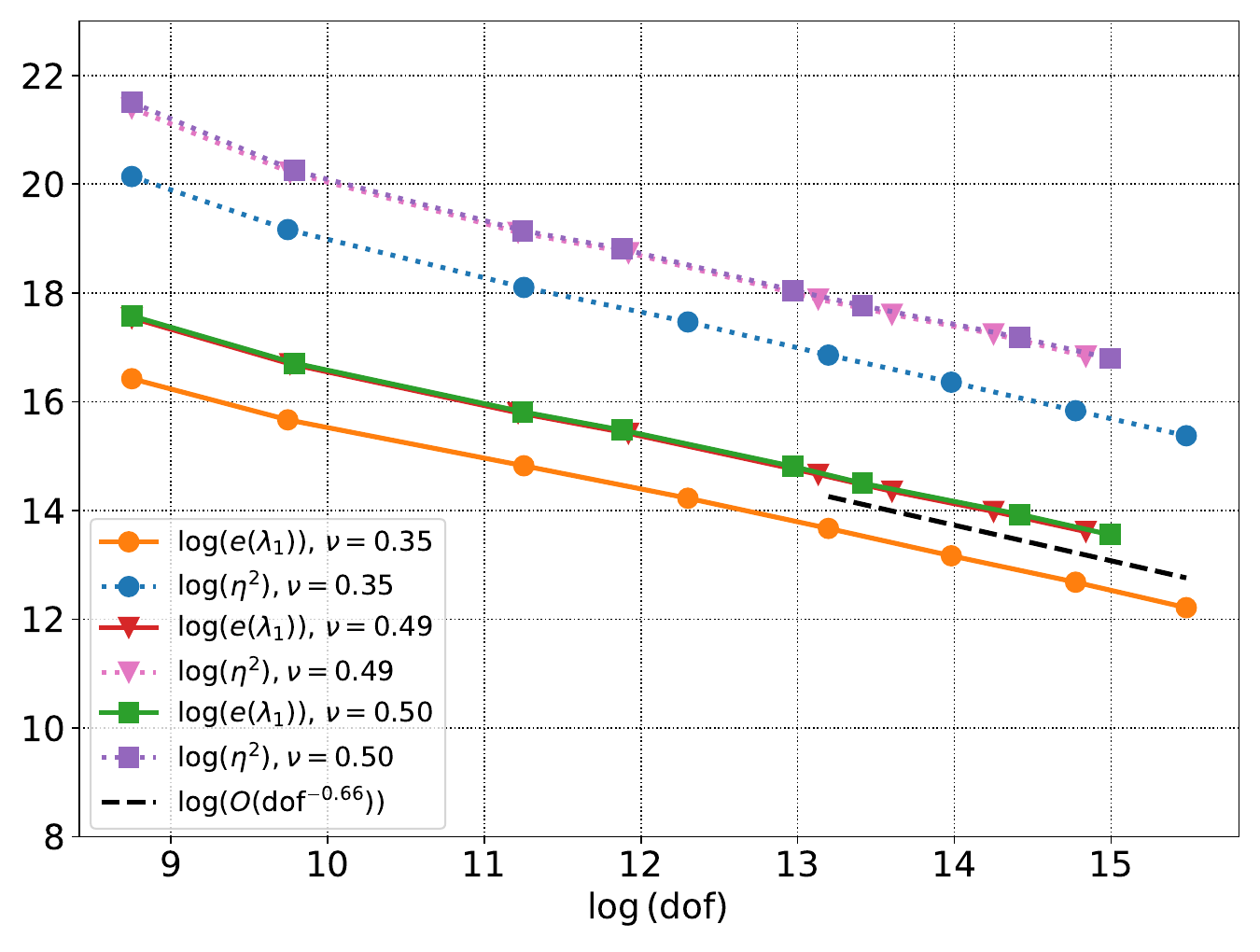}
	\caption{Test \ref{subsec:Lshape3D-domain}.  Error curves and estimator values obtained from the adaptive algorithm in the Lshape domain compared with the line $\mathcal{O}(\texttt{dof}^{-0.66})$.}
	\label{fig:fichera3D-error}
\end{figure}

\subsection{Robustness in 2D and 3D geometries}\label{subsec:robustness}
This final experiment aims to assess the robustness of the proposed DG methods. For the two-dimensional case, we consider again the unit square domain $\Omega:=(0,1)^2$ and the unit cube domain $\Omega:=(0,1)^3$ is considered for the three-dimensional case. The square and the cube are considered to be clamped at the bottom and free of stress on the rest of the facets. In the cube domain, we consider \textit{the bottom} as the set of facets such that $y=0$. The extrapolated values for this experiment are calculated from a reference value for each value of $\nu$. More precisely, in each test we have the extrapolated values in Table \ref{table:robustness-references} for $E=10^j, j=1,2,3...$.
\begin{table}[hbt!]
	{\footnotesize
		\begin{center}
			\caption{Test \ref{subsec:robustness}. Reference eigenvalues for different values of $\nu$ and $E=10^j, j=1,2,3...$ .}
			\begin{tabular}{|c|c| c |}
			\hline
			\hline
			&Square domain & Cube domain\\
			\hline
			$\nu$             & $\widehat{\kappa}_1$ & $\widehat{\kappa}_1$ \\
			\hline
			0.35  & $0.46355423498481496\cdot E$ &$0.444317882233217\cdot E$ \\
			0.49 & $0.48938358373431\cdot E$   &$0.44833605908518\cdot E$  \\
			0.5 &   $0.492273855811713\cdot E$  &$0.44915941661888\cdot E$  \\
			\hline
			\hline
		\end{tabular}
		\end{center}
		\label{table:robustness-references}}
\end{table}
A  stabilization parameter $\texttt{a}=10$ is chosen for all the choices of $E$ and $\nu$.  From the results displayed in Table \ref{tabla:robustness-square-2D3D_SIP} we observe that a almost a constant value for $\eff(\cdot)$ is attained for the selected values of $E$ in two and three dimensions. We recall that similar results were obtained when considering Young modulus $E\in\{10^{-1},10^{-2},10^{-4}\}$. Hence, the proposed estimator is robust with respect to the physical parameters.
\begin{table}[hbt!]\setlength{\tabcolsep}{2.7pt}
	{\footnotesize
		\centering
		\caption{Test \ref{subsec:robustness}. Efficiency indexes for different values of $\nu$ and $E$ on the unit square and unit cube domain when using the SIP method ($\varepsilon=1$).}
		\begin{tabular}{|l|ccc|ccc|ccc|}
			\hline
			&\multicolumn{3}{c}{$\nu=0.35$} & \multicolumn{3}{|c}{$\nu=0.49$} & \multicolumn{3}{|c|}{$\nu=0.50$}\\
			\hline
			\multicolumn{10}{|c|}{Unit square domain $\Omega=(0,1)^2$}\\
			\hline
			\texttt{dof}	&\multicolumn{3}{c}{$\texttt{eff}(\widehat{\kappa}_1)$} & \multicolumn{3}{|c}{$\texttt{eff}(\widehat{\kappa}_1)$} & \multicolumn{3}{|c|}{$\texttt{eff}(\widehat{\kappa}_1)$}\\
			\hline
				126 & 9.01e-02 & 9.01e-02 & 9.01e-02 & 8.17e-02 & 8.19e-02 & 8.19e-02 & 8.13e-02 & 8.15e-02 & 8.15e-02 \\
			350 & 9.60e-02 & 9.60e-02 & 9.60e-02 & 9.16e-02 & 9.18e-02 & 9.18e-02 & 9.11e-02 & 9.14e-02 & 9.14e-02 \\
			1134 & 1.02e-01 & 1.02e-01 & 1.02e-01 & 9.92e-02 & 9.94e-02 & 9.94e-02 & 9.86e-02 & 9.89e-02 & 9.89e-02 \\
			4046 & 1.08e-01 & 1.08e-01 & 1.08e-01 & 1.04e-01 & 1.04e-01 & 1.04e-01 & 1.03e-01 & 1.04e-01 & 1.04e-01 \\
			15246 & 1.12e-01 & 1.12e-01 & 1.12e-01 & 1.07e-01 & 1.07e-01 & 1.07e-01 & 1.05e-01 & 1.06e-01 & 1.06e-01 \\
			\hline
			\multicolumn{10}{|c|}{Unit cube domain $\Omega=(0,1)^3$}\\
			\hline
			\texttt{dof}	&\multicolumn{3}{c}{$\texttt{eff}(\widehat{\kappa}_1)$} & \multicolumn{3}{|c}{$\texttt{eff}(\widehat{\kappa}_1)$} & \multicolumn{3}{|c|}{$\texttt{eff}(\widehat{\kappa}_1)$}\\
			\hline
			9750 & 4.53e-01 & 4.54e-01 & 4.54e-01 & 2.21e-01 & 2.21e-01 & 2.21e-01 & 1.80e-01 & 1.80e-01 & 1.80e-01 \\
			78000 & 8.05e-01 & 8.06e-01 & 8.06e-01 & 2.33e-01 & 2.33e-01 & 2.33e-01 & 1.43e-01 & 1.43e-01 & 1.43e-01 \\
			263250 & 1.13e+00 & 1.13e+00 & 1.13e+00 & 2.60e-01 & 2.60e-01 & 2.60e-01 & 1.27e-01 & 1.28e-01 & 1.28e-01 \\
			624000 & 1.43e+00 & 1.44e+00 & 1.44e+00 & 2.93e-01 & 2.94e-01 & 2.94e-01 & 1.19e-01 & 1.19e-01 & 1.19e-01 \\
			1218750 & 1.72e+00 & 1.72e+00 & 1.72e+00 & 3.29e-01 & 3.30e-01 & 3.30e-01 & 1.13e-01 & 1.14e-01 & 1.14e-01 \\
			\hline
			$E$& $10$ &$10^2$ & $10^4$  & $10$ &$10^2$ & $10^4$ & $10$ &$10^2$ & $10^4$ \\
			\hline
			\hline
		\end{tabular}
		\label{tabla:robustness-square-2D3D_SIP}}
\end{table}

\bibliographystyle{amsplain}
\bibliography{bib_LOQ}

\providecommand{\bysame}{\leavevmode\hbox to3em{\hrulefill}\thinspace}
\providecommand{\MR}{\relax\ifhmode\unskip\space\fi MR }
\providecommand{\MRhref}[2]{%
  \href{http://www.ams.org/mathscinet-getitem?mr=#1}{#2}
}
\providecommand{\href}[2]{#2}
\begin{thebibliography}{10}

\bibitem{MR1885308}
Mark Ainsworth and J.~Tinsley Oden, \emph{A posteriori error estimation in
  finite element analysis}, Pure and Applied Mathematics (New York),
  Wiley-Interscience [John Wiley \& Sons], New York, 2000. \MR{1885308}

\bibitem{MR2220929}
Paola~F. Antonietti, Annalisa Buffa, and Ilaria Perugia, \emph{Discontinuous
  {G}alerkin approximation of the {L}aplace eigenproblem}, Comput. Methods
  Appl. Mech. Engrg. \textbf{195} (2006), no.~25-28, 3483--3503. \MR{2220929}

\bibitem{barrata2023dolfinx}
Igor~A Barrata, Joseph~P Dean, J{\o}rgen~S Dokken, Michal HABERA, Jack HALE,
  Chris Richardson, Marie~E Rognes, Matthew~W Scroggs, Nathan Sime, and Garth~N
  Wells, \emph{{DOLFINx}: The next generation fenics problem solving
  environment},  (2023).

\bibitem{MR2263045}
Annalisa Buffa and Ilaria Perugia, \emph{Discontinuous {G}alerkin approximation
  of the {M}axwell eigenproblem}, SIAM J. Numer. Anal. \textbf{44} (2006),
  no.~5, 2198--2226. \MR{2263045}

\bibitem{BCGKDS}
Bernardo Cockburn, Guido Kanschat, and Dominik Sch\"otzau, \emph{A note on
  discontinuous {G}alerkin divergence-free solutions of the {N}avier-{S}tokes
  equations}, J. Sci. Comput. \textbf{31} (2007), no.~1-2, 61--73.

\bibitem{MR483400}
Jean Descloux, Nabil Nassif, and Jacques Rappaz, \emph{On spectral
  approximation. {I}. {T}he problem of convergence}, RAIRO Anal. Num\'{e}r.
  \textbf{12} (1978), no.~2, 97--112, iii. \MR{483400}

\bibitem{MR483401}
\bysame, \emph{On spectral approximation. {II}. {E}rror estimates for the
  {G}alerkin method}, RAIRO Anal. Num\'{e}r. \textbf{12} (1978), no.~2,
  113--119, iii. \MR{483401}

\bibitem{MR0975121}
E.~B. Fabes, C.~E. Kenig, and G.~C. Verchota, \emph{The {D}irichlet problem for
  the {S}tokes system on {L}ipschitz domains}, Duke Math. J. \textbf{57}
  (1988), no.~3, 769--793. \MR{975121}

\bibitem{MR4071826}
Joscha Gedicke and Arbaz Khan, \emph{Divergence-conforming discontinuous
  {G}alerkin finite elements for {S}tokes eigenvalue problems}, Numer. Math.
  \textbf{144} (2020), no.~3, 585--614. \MR{4071826}

\bibitem{geuzaine2009gmsh}
Christophe Geuzaine and Jean-Fran{\c{c}}ois Remacle, \emph{Gmsh: A 3-{D} finite
  element mesh generator with built-in pre-and post-processing facilities},
  International journal for numerical methods in engineering \textbf{79}
  (2009), no.~11, 1309--1331.

\bibitem{grisvard1986problemes}
P~Grisvard, \emph{Problemes aux limites dans les polygones. mode d'emploi},
  Bulletin de la Direction des Etudes et Recherches Series C Mathematiques,
  Informatique \textbf{1} (1986), 21--59.

\bibitem{MR1886000}
Peter Hansbo and Mats~G. Larson, \emph{Discontinuous {G}alerkin methods for
  incompressible and nearly incompressible elasticity by {N}itsche's method},
  Comput. Methods Appl. Mech. Engrg. \textbf{191} (2002), no.~17-18,
  1895--1908. \MR{1886000}

\bibitem{HSW}
Paul Houston, Dominik Sch\"otzau, and Thomas~P. Wihler, \emph{Energy norm a
  posteriori error estimation for mixed discontinuous {G}alerkin approximations
  of the {S}tokes problem}, J. Sci. Comput. \textbf{22/23} (2005), 347--370.

\bibitem{GKDS}
Guido Kanschat and Dominik Sch\"otzau, \emph{Energy norm a posteriori error
  estimation for divergence-free discontinuous {G}alerkin approximations of the
  {N}avier-{S}tokes equations}, Internat. J. Numer. Methods Fluids \textbf{57}
  (2008), no.~9, 1093--1113.

\bibitem{khan2023finite}
Arbaz Khan, Felipe Lepe, David Mora, and Jesus Vellojin, \emph{Finite element
  analysis of the nearly incompressible linear elasticity eigenvalue problem
  with variable coefficients}, 2023.

\bibitem{MR3973678}
Arbaz Khan, Catherine~E. Powell, and David~J. Silvester, \emph{Robust a
  posteriori error estimators for mixed approximation of nearly incompressible
  elasticity}, Internat. J. Numer. Methods Engrg. \textbf{119} (2019), no.~1,
  18--37. \MR{3973678}

\bibitem{MR4623018}
Felipe Lepe, \emph{Interior penalty discontinuous {G}alerkin methods for the
  velocity-pressure formulation of the {S}tokes spectral problem}, Adv. Comput.
  Math. \textbf{49} (2023), no.~4, Paper No. 61, 31. \MR{4623018}

\bibitem{MR3962898}
Felipe Lepe, Salim Meddahi, David Mora, and Rodolfo Rodr\'{\i}guez, \emph{Mixed
  discontinuous {G}alerkin approximation of the elasticity eigenproblem},
  Numer. Math. \textbf{142} (2019), no.~3, 749--786. \MR{3962898}

\bibitem{MR4077220}
Felipe Lepe and David Mora, \emph{Symmetric and nonsymmetric discontinuous
  {G}alerkin methods for a pseudostress formulation of the {S}tokes spectral
  problem}, SIAM J. Sci. Comput. \textbf{42} (2020), no.~2, A698--A722.
  \MR{4077220}

\bibitem{MR4673997}
Felipe Lepe, David Mora, and Jesus Vellojin, \emph{Discontinuous {G}alerkin
  methods for the acoustic vibration problem}, J. Comput. Appl. Math.
  \textbf{441} (2024), Paper No. 115700. \MR{4673997}

\bibitem{MR4430561}
Felipe Lepe, Gonzalo Rivera, and Jesus Vellojin, \emph{Mixed methods for the
  velocity-pressure-pseudostress formulation of the {S}tokes eigenvalue
  problem}, SIAM J. Sci. Comput. \textbf{44} (2022), no.~3, A1358--A1380.
  \MR{4430561}

\bibitem{MR1011446}
L.~Ridgway Scott and Shangyou Zhang, \emph{Finite element interpolation of
  nonsmooth functions satisfying boundary conditions}, Math. Comp. \textbf{54}
  (1990), no.~190, 483--493. \MR{1011446}

\bibitem{scroggs2022basix}
Matthew~W Scroggs, Igor~A Baratta, Chris~N Richardson, and Garth~N Wells,
  \emph{Basix: a runtime finite element basis evaluation library}, Journal of
  Open Source Software \textbf{7} (2022), no.~73, 3982.

\bibitem{MR3059294}
R\"{u}diger Verf\"{u}rth, \emph{A posteriori error estimation techniques for
  finite element methods}, Numerical Mathematics and Scientific Computation,
  Oxford University Press, Oxford, 2013. \MR{3059294}

\end{thebibliography}
\end{document}